\newtheorem{Theorem}{Theorem}[section]
\newtheorem{Lemma}[Theorem]{Lemma}
\newtheorem{Proposition}[Theorem]{Proposition}
\theoremstyle{remark}
\newtheorem{Remark}[Theorem]{Remark}
\theoremstyle{definition}
\newtheorem{Definition}[Theorem]{Definition}
\newtheorem{Question}[Theorem]{Question}
\begin{document}
\bibliographystyle{plain}
\title{Sobolev-Lorentz capacity and its regularity in the Euclidean setting}
\author[\c{S}. Costea]{\c{S}erban Costea}
\address{\c{S}. Costea\\
Department of Mathematics and Computer Science\\
University of Pite\c sti\\
Str. T\^argul din Vale nr. 1\\
RO-110040 Pite\c sti, Arge\c s, Romania}
\email{serban.costea@upit.ro, secostea@hotmail.com}

\keywords{Sobolev spaces, Lorentz spaces, capacity}
\subjclass[2010]{Primary: 31C15, 46E35}

\thanks{The author was partly supported by the University of Pisa via grant PRA-2015-0017.}

\begin{abstract}
This paper studies the Sobolev-Lorentz capacity and its regularity in the Euclidean setting for $n \ge 1$ integer. We extend here our previous results on the Sobolev-Lorentz capacity obtained for $n \ge 2.$

Moreover, for $n \ge 2$ integer we obtain a few new results concerning the $n,1$ relative and global capacities. Specifically, we obtain sharp estimates for the $n,1$ relative capacity of the concentric condensers $(\overline{B}(0,r), B(0,1))$ for all $r$ in $[0,1).$ As a consequence we obtain the exact value of the $n,1$ capacity of a point relative to all its bounded open neighborhoods from ${\mathbf{R}}^n$ when $n \ge 2.$ These new sharp estimates concerning the $n,1$ relative capacity improve some of our previous results. We also obtain a new result concerning the $n,1$ global capacity. Namely, we show that this aforementioned constant is also the value of the $n,1$ global capacity of any point from ${\mathbf{R}}^n,$ where $n \ge 2$ is integer.

Computing the aforementioned exact value of the $n,1$ relative capacity of a point with respect to all its bounded open neighborhoods from ${\mathbf{R}}^n$ allows us to give a new prove of the embedding $H_{0}^{1,(n,1)}(\Omega) \hookrightarrow C(\overline{\Omega}) \cap L^{\infty}(\Omega),$ where $\Omega \subset {\mathbf{R}}^n$ is open and $n \ge 2$ is an integer.

In the penultimate section of our paper we prove a new weak convergence result for bounded sequences in the non-reflexive spaces $H^{1,(p,1)}(\Omega)$ and $H_{0}^{1,(p,1)}(\Omega).$ The weak convergence result concerning the spaces $H^{1,(p,1)}(\Omega)$ is valid whenever $1<p<\infty,$ while the weak convergence result concerning the spaces $H_{0}^{1,(p,1)}(\Omega)$ is valid whenever $1 \le n<p<\infty$ or $1<n=p<\infty.$

As a consequence of the weak convergence result concerning the spaces $H_{0}^{1,(p,1)}(\Omega),$ in the last section of our paper we show that the relative and the global $(p,1)$ and $p,1$ capacities are Choquet whenever $1 \le n<p<\infty$ or $1<n=p<\infty.$
\end{abstract}

\maketitle
\section{Introduction}
In this paper we study the Sobolev-Lorentz capacity and its regularity in the Euclidean setting for $n \ge 1.$ This paper is motivated by the work of Stein-Weiss \cite{SW} and Bennett-Sharpley \cite{BS}
on Lorentz spaces and by the work of Stein \cite{Ste} and Cianchi-Pick \cite{CiPi1}, \cite{CiPi2} on Sobolev-Lorentz spaces.

We studied the Sobolev-Lorentz spaces and their associated capacities extensively in our previous work. In this paper we extend some of the previous results obtained in our book \cite{Cos3} and in our papers \cite{Cos1} and \cite{Cos4}. In \cite{Cos3} we studied the Sobolev-Lorentz spaces and the associated Sobolev-Lorentz capacities in the Euclidean setting for $n \ge 2.$ The restriction on $n$ there as well as in \cite{Cos1} was due to the fact that we studied the
$n,q$ capacity for $n>1.$ In our recent paper \cite{Cos4} we studied the Sobolev-Lorentz spaces in the Euclidean setting for $n \ge 1.$ There we extended to the case $n=1$ many of the results on Sobolev-Lorentz spaces obtained in \cite{Cos0} and \cite{Cos3} for $n \ge 2.$ In this paper we extend to $n=1$ many on the results on Sobolev-Lorentz capacities obtained in \cite{Cos0} and \cite{Cos3} for $n \ge 2.$

The Lorentz spaces were studied by Bennett-Sharpley in \cite{BS} and by Stein-Weiss in \cite{SW}.

The Sobolev-Lorentz spaces have also been studied by Stein in \cite{Ste}, Cianchi-Pick in \cite{CiPi1} and \cite{CiPi2}, by Kauhanen-Koskela-Mal{\'{y}} in \cite{KKM}, and by Mal{\'{y}}-Swanson-Ziemer in \cite{MSZ}. We studied the Sobolev-Lorentz relative $p,q$-capacity in the Euclidean setting (see \cite{Cos0}, \cite{Cos1} and \cite{Cos3}). See also our joint work \cite{CosMaz} with V. Maz'ya.

The classical Sobolev spaces were studied by Gilbarg-Trudinger in \cite{GT}, Maz'ja in \cite{Maz}, Evans in \cite{Eva}, Heinonen-Kilpel\"{a}inen-Martio in \cite{HKM}, and by Ziemer in \cite{Zie}. The Sobolev $p$-capacity was studied by Maz'ya \cite{Maz} and by Heinonen-Kilpel\"{a}inen-Martio \cite{HKM} in ${\mathbf{R}}^n.$

After recalling the definition of Lorentz spaces and some of its basic properties in Section \ref{section Lorentz spaces}, we move to Section \ref{section Sobolev-Lorentz spaces}, where we recall the definition of the Sobolev-Lorentz spaces and some of the results that are to be used later in the paper.

In Section \ref{section Sobolev-Lorentz capacity} we study the basic properties of the Sobolev-Lorentz capacities on ${\mathbf{R}}^n$ for $n \ge 1.$ There we study the global Sobolev-Lorentz capacities ${\rm{Cap}}_{(p,q)}(\cdot)$ and ${\rm{Cap}}_{p,q}(\cdot)$ and the relative Sobolev-Lorentz capacities ${\rm{cap}}_{(p,q)}(\cdot, \Omega)$ and ${\rm{cap}}_{p,q}(\cdot, \Omega)$ for $\Omega \subset {\mathbf{R}}^n$ bounded and open, $n \ge 1$ integer, $1<p<\infty$ and $1 \le q \le \infty.$ The $p,q$-capacity is associated to the Lorentz $p,q$-quasinorm while the $(p,q)$-capacity is associated to the Lorentz $(p,q)$-quasinorm. The case $p=q$ yields the $p$-capacity, studied extensively in literature.

In Section \ref{section Sobolev-Lorentz capacity} we revisit many of the basic properties of the Sobolev-Lorentz capacities, studied extensively in Chapter 4 of our book \cite{Cos3} for $n \ge 2$ and we extend them to the case $n=1.$ The results that we extend here concern the monotonicity, the convergence, the countable subadditivity and the regularity of these capacities. The regularity of these capacities was extended in this section to the case $n=1$ for $1<q<\infty$ when we worked with the $(p,q)$ global and the $(p,q)$ relative capacities and for $1<q<p$ when we worked with the $p,q$ global and the $p,q$ relative capacities.

Due to the non-reflexivity of the Sobolev-Lorentz spaces $H_{0}^{1, (p,1)}(\Omega)$ and $H_{0}^{1,(p,\infty)}(\Omega),$ it is challenging to prove the Choquet property for the corresponding relative and global capacities associated to these non-reflexive Sobolev-Lorentz spaces.
Also, due to the fact that the $p,q$-quasinorm is not a norm when $p<q \le \infty,$ the Choquet property of the $p,q$ relative and global capacities is not known when $q$ is in the range $(p, \infty].$

No positive results on the Choquet property for the corresponding relative and global capacities associated to these non-reflexive Sobolev-Lorentz spaces have been obtained until now. In this paper we obtain a few partial positive new results concerning the Choquet property of $(p,1)$ and the $p,1$ relative and global capacities. Namely, in Section \ref{section Choquet property for the p1 capacities} we show that the global Sobolev-Lorentz capacities ${\rm{Cap}}_{(p,1)}(\cdot)$ and ${\rm{Cap}}_{p,1}(\cdot)$ as well as the relative Sobolev-Lorentz capacities ${\rm{cap}}_{(p,1)}(\cdot, \Omega)$ and ${\rm{cap}}_{p,1}(\cdot, \Omega)$ are Choquet whenever $1 \le n<p<\infty$ or $1<n=p<\infty.$ Here $\Omega \subset {\mathbf{R}}^n$ is a bounded and open set and $n \ge 1$ is an integer. See Theorems \ref{Choquet (p,1) relative capacity Thm} and \ref{Choquet p,1 relative capacity Thm} for the regularity of the relative capacities. Theorems \ref{Choquet (p,1) global capacity Thm} and \ref{Choquet p,1 global capacity Thm} deal with the regularity of the global capacities.

In order to prove the regularity of these capacities we needed to prove a Monotone Convergence Theorem for each of them. See Theorem \ref{MCT for the (p,1) and p,1 relative capacities} for the relative capacities and Theorem \ref{MCT for the (p,1) and p,1 global capacities} for the global capacities. These are new results as well.

When proving the Choquet property of the $(p,1)$ and $p,1$ relative and global capacities for these values of $n$ and $p$ (that is, $1 \le n<p<\infty$ or $1<n=p<\infty$) we used many times the fact that for these values of $n$ and $p$ we can work with continuous admissible functions from $H_{0}^{1,(p,1)}(\Omega).$

Indeed, in \cite{Cos4} we proved that the spaces $H_{0}^{1,(p,q)}(\Omega)$ embed into the space $C^{1-\frac{n}{p}}(\overline{\Omega})$ of H\"{o}lder continuous functions on $\overline{\Omega}$ with exponent $1-\frac{n}{p}$ whenever $\Omega \subset {\mathbf{R}}^n$ is open and $1 \le n, q \le \infty.$
See \cite[Theorem 5.5 (iii)]{Cos4} for $1=n<p<\infty$ and \cite[Theorem 5.6 (iv)]{Cos4} for $1<n<p<\infty.$

For $1<n=p<\infty$ we give a new prove of the embedding $H_{0}^{1,(n,1)}(\Omega) \hookrightarrow C(\overline{\Omega}) \cap L^{\infty}(\Omega)$ in section \ref{section Sharp estimates of the Sobolev-Lorentz n1 relative capacity} and we find the optimal constant for the embedding.
See Theorem \ref{continuous embedding of H01n1 into C cap Linfty} (ii).

This embedding was obtained by Stein in his paper \cite{Ste} and by Cianchi-Pick
(see \cite[Theorem 3.5 (i)]{CiPi1}) with the same optimal constant that we obtained in this paper.

Our proof of this embedding is different. We use a new approach. Specifically, we use the theory of the $n,1$ relative capacity in ${\mathbf{R}}^n, n \ge 2.$ In Section \ref{section Sharp estimates of the Sobolev-Lorentz n1 relative capacity} we obtain a new result by improving our estimates from \cite[Theorem 3.11]{Cos1} for the $n,1$ relative capacity of the condensers $(\overline{B}(0,r), B(0,1))$ and extending them to ALL $r$ in $[0,1).$ See Theorem \ref{sharp estimates for the n1 relative capacity of a point} (i). In particular, we obtain the exact value for the $n,1$ capacity of a point relative to all its bounded open neighborhoods from ${\mathbf{R}}^n,$ a strictly positive number as we saw in \cite[Corollary 3.8]{Cos1}. See Theorem \ref{sharp estimates for the n1 relative capacity of a point} (ii). Moreover, in this section we obtain a new result for the global $n,1$ capacity as well. Namely, in Theorem \ref{sharp estimate for the n1 global capacity of a point} we show that the value from Theorem \ref{sharp estimates for the n1 relative capacity of a point} (ii) is also the value of the global $n,1$ capacity of any point from ${\mathbf{R}}^n.$

By using the theory of the $n,1$ relative capacity in ${\mathbf{R}}^n, n \ge 2,$ we see that this aforementioned constant shows up in the embedding $H_{0}^{1,(n,1)}(\Omega) \hookrightarrow C(\overline{\Omega}) \cap L^{\infty}(\Omega).$ See (\ref{Linfty norm of u le Ln1 norm of nabla u}).

Thus, Section \ref{section Sharp estimates of the Sobolev-Lorentz n1 relative capacity} together with our paper \cite{Cos4} (see \cite[Theorems 3.5, 4.3, 4.13 and 5.6]{Cos4}) reinforce the fact that for every $n \ge 2$ integer and for every $\Omega \subset {\mathbf{R}}^n$ open, the space $H_{loc}^{1,(n,1)}(\Omega)$ is the largest Sobolev-Lorentz space defined on $\Omega$ for which each function has a version in $C(\Omega).$

This embedding result from Section \ref{section Sharp estimates of the Sobolev-Lorentz n1 relative capacity} is being relied on heavily in Section \ref{section Bounded sequences in non-reflexive Sobolev-Lorentz spaces} and in Section \ref{section Choquet property for the p1 capacities}.

In Section \ref{section Bounded sequences in non-reflexive Sobolev-Lorentz spaces} we prove a new weak convergence result for bounded sequences in the non-reflexive spaces $H^{1,(p,1)}(\Omega)$ and $H_{0}^{1,(p,1)}(\Omega).$ See Theorem \ref{Bdd in H01p1 weak limit in H01p1}.

This new weak convergence result concerning $H^{1,(p,1)}(\Omega)$ holds for all $p$ in $(1,\infty)$ and for all integers $n \ge 1.$ See Theorem \ref{Bdd in H01p1 weak limit in H01p1} (i). We fix $q$ in $(1,\infty).$ We show that even in a non-reflexive space such as $H^{1,(p,1)}(\Omega),$ if we have a bounded sequence $u_k$ in $H^{1,(p,1)}(\Omega)$ such that $(u_k, \nabla u_k)$ converges weakly to $(u, \nabla u)$ in $L^{(p,q)}(\Omega) \times L^{(p,q)}(\Omega;{\mathbf{R}}^n),$ then the function $u$ is in the reflexive space $H^{1,(p,s)}(\Omega)$ and in fact $(u_k, \nabla u_k)$ converges weakly to $(u, \nabla u)$ in $L^{(p,s)}(\Omega) \times L^{(p,s)}(\Omega;{\mathbf{R}}^n)$ whenever $1<s<\infty.$

Then we show that this limit function $u$ is also in the non-reflexive space $H^{1,(p,1)}(\Omega).$ This
task is challenging to prove. Due to the non-reflexivity of the spaces $L^{p,1}(\Omega; {\mathbf{R}}^m),$
we do not know whether the sequence $(u_k, \nabla u_k)$ converges weakly to $(u, \nabla u)$ in $L^{p,1}(\Omega) \times L^{p,1}(\Omega;{\mathbf{R}}^n)$ or not. Although we cannot rely on the weak-$*$ lower semicontinuity of the $p,1$ norm, we manage to prove the membership of $u$ in $H^{1,(p,1)}(\Omega)$ and a Fatou-type result for $u$ and for $\nabla u$ with respect to both the $p,1$ norm and the $(p,1)$ norm.

The new weak convergence result for $H_{0}^{1,(p,1)}(\Omega)$ is even more challenging to prove. We managed to prove it for $1 \le n<p<\infty$ and for $1<n=p<\infty.$ See Theorem \ref{Bdd in H01p1 weak limit in H01p1} (ii). When proving this weak convergence result for $H_{0}^{1,(p,1)}(\Omega)$ we relied heavily many times on the fact that for these values of $n$ and $p$ we can work with continuous functions from $H_{0}^{1,(p,1)}(\Omega).$ This leaves for instance as an open question the membership of the limit function $u$ in $H_{0}^{1,(p,1)}(\Omega)$ when $1<p<n,$ $\Omega \subset {\mathbf{R}}^n$ is bounded and $u$ is not compactly supported in $\Omega.$

This new weak convergence theorem from Section \ref{section Bounded sequences in non-reflexive Sobolev-Lorentz spaces} is being put to use later in Section \ref{section Choquet property for the p1 capacities} to prove the Choquet property of the global Sobolev-Lorentz capacities ${\rm{Cap}}_{(p,1)}(\cdot)$ and ${\rm{Cap}}_{p,1}(\cdot)$ and of the relative Sobolev-Lorentz capacities ${\rm{cap}}_{(p,1)}(\cdot, \Omega)$ and ${\rm{cap}}_{p,1}(\cdot, \Omega)$ whenever $1 \le n<p<\infty$ or $1<n=p<\infty.$ Like before, $\Omega \subset {\mathbf{R}}^n$ is a bounded and open set and $n \ge 1$ is an integer. The existence of discontinuous and/or unbounded functions in $H_{0}^{1,(p,1)}(\Omega)$ when $1<p<n$ prevents us for now from extending the Choquet property of the $p,1$ and $(p,1)$ relative and global capacities to the case $1<p<n.$

\section{Notations}
\label{section Notations}

 Here we recall the standard notation to be used throughout this paper. (See also \cite{Cos4}).
 Throughout this paper, $C$ will denote a positive constant whose value is not necessarily the same at
 each occurrence; it may vary even within a line. $C(a,b, \ldots)$ is a constant that depends only on the parameters $a,b, \cdots.$

 Throughout this paper $\Omega$ will denote a nonempty open subset of ${\mathbf{R}}^n,$ while $dx=d m_n(x)$ will denote the Lebesgue $n$-measure in ${\mathbf{R}}^n,$ where $n \ge 1$ is an integer.
 For $E \subset {\mathbf{R}}^n,$ the boundary, the closure, and the complement of $E$ with
 respect to ${\mathbf{R}}^n$ will be denoted by $\partial E,$ $\overline{E},$ and
 $\mathbf{R}^n \setminus E,$ respectively, while $|E|=\int_{E} dx$ will denote the Lebesgue
 measure of $E$ whenever $E$ is measurable; $E \subset \subset F$ means that $\overline{E}$
 is a compact subset of $F.$

 Moreover, $B(a,r)= \{ x \in {\mathbf{R}}^n: |x-a|<r \}$ is the open ball with center
 $a \in {\mathbf{R}}^n$ and radius $r>0,$ while $\overline{B}(a,r)= \{ x \in {\mathbf{R}}^n: |x-a| \le r \}$ is the closed ball with center $a \in \mathbf{R}^n$ and radius $r>0.$

 For $n \ge 1$ integer, $\Omega_n$ denotes the Lebesgue measure of the
 $n$-dimensional unit ball. (That is, $\Omega_n=|B(0,1)|$). For $n \ge 2$ integer,
 $\omega_{n-1}$ denotes the spherical measure of the $n-1$-dimensional sphere;
 thus, $\omega_{n-1}=n \Omega_n$ for every integer $n \ge 2.$

 For a Lebesgue measurable function $u: \Omega \rightarrow {\mathbf{R}},$ $\mbox {supp } u$ is the
 smallest closed set such that $u$ vanishes outside $\mbox {supp } u.$

 For a Lebesgue measurable vector-valued function $f=(f_1, \ldots, f_m):
 \Omega \rightarrow \mathbf{R}^m,$ we let
 \begin{equation*}\label{def abs val of a vector function}
 |f|=\sqrt{f_1^2+f_2^2+\ldots+f_m^2}.
 \end{equation*}

\section{Lorentz spaces}
\label{section Lorentz spaces}

For the next three subsections we follow mostly our paper \cite{Cos4}.
\subsection{Definitions and basic properties}
 Let $f:\Omega \rightarrow \mathbf{R}$ be a measurable function. We
 define $\lambda_{[f]},$ the \textit{distribution function} of $f$ as follows (see Bennett-Sharpley \cite[Definition II.1.1]{BS} and Stein-Weiss \cite[p.\ 57]{SW}):
 $$\lambda_{[f]}(t)=|\{x \in \Omega: |f(x)| > t \}|, \qquad t \ge 0.$$
 We define $f^{*},$  the \textit{nonincreasing rearrangement} of $f$ by
 $$f^{*}(t)=\inf\{v: \lambda_{[f]}(v) \le t \}, \quad t \ge 0.$$
 (See Bennett-Sharpley \cite[Definition II.1.5]{BS} and Stein-Weiss \cite[p.\ 189]{SW}).
 We notice that $f$ and $f^{*}$ have the same distribution function.
 Moreover, for every positive $\alpha$ we have $(|f|^{\alpha})^{*}=(|f|^{*})^{\alpha}$
 and if $|g|\le |f|$ a.e. on $\Omega,$ then $g^{*}\le f^{*}.$
 (See Bennett-Sharpley \cite[Proposition II.1.7]{BS}).
 We also define $f^{**}$, the \textit{maximal function} of $f^{*}$ by
 $$f^{**}(t)=m_{f^{*}}(t)=\frac{1}{t} \int_{0}^{t} f^{*}(s) ds, \quad t >0.$$
 (See Bennett-Sharpley \cite[Definition II.3.1]{BS} and Stein-Weiss \cite[p.\ 203]{SW}).

Throughout this paper, we denote by $q'$ the H\"{o}lder
conjugate of $q \in [1,\infty].$

The \textit{Lorentz space} $L^{p,q}(\Omega),$ $1<p<\infty,$ $1\le
q\le \infty,$ is defined as follows:
$$L^{p,q}(\Omega)= \{f: \Omega \rightarrow \mathbf{R}: f \mbox { is measurable and }
||f||_{L^{p,q}(\Omega)}<\infty\},$$
where
$$||f||_{L^{p,q}(\Omega)}=||f||_{p,q}=\left\{ \begin{array}{lc}
\left( \int_{0}^{\infty} (t^{\frac{1}{p}}f^{*}(t))^q \, \frac{dt}{t}
\right)^{\frac{1}{q}} & 1 \le q < \infty \\
\sup_{t>0} t \lambda_{[f]}(t)^{\frac{1}{p}}=\sup_{s>0}
s^{\frac{1}{p}} f^{*}(s) & q=\infty.
\end{array}
\right.
$$
(See Bennett-Sharpley \cite[Definition IV.4.1]{BS} and Stein-Weiss \cite[p.\ 191]{SW}). If $1 \le
q\le p,$ then $||\cdot||_{L^{p,q}(\Omega)}$ already represents a
norm, but for $p < q \le \infty$ it represents a quasinorm that is
equivalent to the norm $||\cdot||_{L^{(p,q)}(\Omega)},$ where
$$||f||_{L^{(p,q)}(\Omega)}=||f||_{(p,q)}=\left\{ \begin{array}{lc}
\left( \int_{0}^{\infty} (t^{\frac{1}{p}}f^{**}(t))^q \, \frac{dt}{t} \right)^{\frac{1}{q}} & 1 \le q < \infty \\
\sup_{t>0} t^{\frac{1}{p}} f^{**}(t) & q=\infty.
\end{array}
\right.
$$
(See Bennett-Sharpley \cite[Definition IV.4.4]{BS}).

Namely, from Lemma IV.4.5 in Bennett-Sharpley \cite{BS}  we have that
$$||f||_{L^{p,q}(\Omega)} \le ||f||_{L^{(p,q)}(\Omega)} \le \frac {p}{p-1} ||f||_{L^{p,q}(\Omega)}$$
for every $1\le q \le \infty.$

For a measurable vector-valued function $f=(f_1,\ldots, f_m): \Omega
\rightarrow  \mathbf{R}^m$ we say that $f \in L^{p,q}(\Omega;
\mathbf{R}^m)$ if and only if $f_i \in L^{p,q}(\Omega)$ for
$i=1,2,\ldots, m,$ if and only if $|f| \in L^{p,q}(\Omega)$ and we
define
\begin{equation*}
||f||_{L^{p,q}(\Omega;\mathbf{R}^m)}=||\,|f|\,||_{L^{p,q}(\Omega)}.
\end{equation*}
Similarly
\begin{equation*} ||f||_{L^{(p,q)}(\Omega;
\mathbf{R}^m)}=||\,|f|\,||_{L^{(p,q)}(\Omega)}.
\end{equation*}
 Obviously, it follows from the real-valued case that
$$||f||_{L^{p,q}(\Omega; \mathbf{R}^m)} \le ||f||_{L^{(p,q)}(\Omega; \mathbf{R}^m)}
\le \frac {p}{p-1} ||f||_{L^{p,q}(\Omega; \mathbf{R}^m)}$$ for every
$1 \le q \le \infty,$ and like in the real-valued case,
$||\cdot||_{L^{p,q}(\Omega; \mathbf{R}^m)}$ is already a norm when
$1\le q \le p,$ while it is a quasinorm when $p<q\le \infty.$

It is known that $(L^{p,q}(\Omega; \mathbf{R}^m),
||\cdot||_{L^{p,q}(\Omega; \mathbf{R}^m)})$ is a Banach space for
$1\le q \le p,$ while $(L^{p,q}(\Omega; \mathbf{R}^m),
||\cdot||_{L^{(p,q)}(\Omega; \mathbf{R}^m)})$ is a Banach space for
$1<p< \infty,$ $1\le q \le \infty.$ For more results on Lorentz spaces
we refer the reader to Bennett-Sharpley \cite[Chapter IV]{BS} and to
Stein-Weiss \cite[Chapter V]{SW}.

\subsection{Weak convergence of the $(p,q)$-norm and reflexivity of the Lorentz spaces}

\begin{Definition} \label{weak convergence in Lpq q finite} Let $\Omega$ be an open set in ${\mathbf{R}}^n,$
where $n \ge 1$ is an integer. Suppose $1<p<\infty$ and $1 \le q<\infty.$ We say that a sequence
$u_j$ in $L^{(p,q)}(\Omega)$ \textit{converges weakly} to a function $u \in L^{(p,q)}(\Omega)$ if
$$\int_{\Omega} v(x) u_j(x) \, dx \rightarrow \int_{\Omega} v(x) u(x) \, dx$$
whenever $v \in L^{(p', q')}(\Omega).$ There is an obvious interpretation in terms of the coordinate
functions for the weak convergence of vector-valued functions in $L^{(p,q)}(\Omega; {\mathbf{R}}^m),$ where $m \ge 1$ is an integer.
\end{Definition}

The spaces $L^{p,q}(\Omega;{\mathbf{R}}^m)$ are reflexive whenever $1<q<\infty$ and the dual of $L^{p,q}(\Omega; \mathbf{R}^m)$ is, up to equivalence of norms, the space $L^{p',q'}(\Omega; \mathbf{R}^m)$ for $1 \le q<\infty.$ See Bennett-Sharpley \cite[Theorem IV.4.7 and Corollary IV.4.8]{BS}, Hunt \cite[p.\ 259-262]{Hun} and the definition of the spaces $L^{p,q}(\Omega; \mathbf{R}^m).$
We notice that the terminology in the previous definition agrees with the usual weak convergence in the Banach space theory if $1 \le q<\infty.$

\subsection{Strict inclusions between Lorentz spaces}

\vspace{2mm}

\begin{Remark} \label{relation between Lpr and Lps}
It is known
(see Bennett-Sharpley \cite[Proposition IV.4.2]{BS}) that for every $p \in
(1,\infty)$ and $1\le r<s\le \infty$ there exists a constant $C(p,r,s)>0$ such
that
\begin{equation}\label{relation between the Lpr and the Lps norm}
||f||_{L^{p,s}(\Omega)} \le C(p,r,s) ||f||_{L^{p,r}(\Omega)}
\end{equation}
for all measurable functions $f \in L^{p,r}(\Omega).$ In particular,
$L^{p,r}(\Omega) \subset L^{p,s}(\Omega).$ Like in the real-valued
case, it follows that
\begin{equation}\label{relation between the Lpr and the Lps norm m ge 1}
||f||_{L^{p,s}(\Omega; \mathbf{R}^m)} \le C(p,r,s)
||f||_{L^{p,r}(\Omega; \mathbf{R}^m)}
\end{equation}
for every $m \ge 1$ integer and for all measurable functions $f \in
L^{p,r}(\Omega; \mathbf{R}^m),$ where $C(p,r,s)$ is the constant
from (\ref{relation between the Lpr and the Lps norm}). In particular,
$$L^{p,r}(\Omega; \mathbf{R}^m) \subset
L^{p,s}(\Omega;\mathbf{R}^m) \mbox{ for every $m \ge 1$ integer.}$$
\end{Remark}

The above inclusion is strict. See Ziemer \cite[p.\ 37, Exercise 1.7]{Zie} and
\cite[Theorems 3.4 and 3.5]{Cos4}.

\section{Sobolev-Lorentz Spaces}
\label{section Sobolev-Lorentz spaces}
This section is based in part on Chapter 3 of our book \cite{Cos3} and on Section 4 of our article \cite{Cos4}.

\subsection{The $H^{1, (p,q)}$ and $W^{1, (p,q)}$ Spaces}
In this subsection we recall the definition of the Sobolev-Lorentz spaces $H_{0}^{1, (p,q)}(\Omega),$
$H^{1, (p,q)}(\Omega),$ and $W^{1, (p,q)}(\Omega),$ where $\Omega \subset {\mathbf{R}}^n$ is an open set
and $n \ge 1$ is an integer. These spaces were studied extensively in Chapter 3 of our book \cite{Cos3}
(the case $n \ge 2$) and in Section 4 of our article \cite{Cos4} (the case $n \ge 1$).

For $1<p<\infty$ and $1\le q \le \infty$ we define the Sobolev-Lorentz space
$H^{1, (p,q)}(\Omega)$ as follows.
Let $r=\min(p,q).$ For a function $\phi \in
C^{\infty}(\Omega)$ we define its Sobolev-Lorentz $(p,q)$-norm by
$$||\phi||_{1, (p,q); \Omega}=\left(||\phi||_{L^{(p,q)}(\Omega)}^{r}
+||\nabla \phi||_{L^{(p,q)}(\Omega;
\mathbf{R}^n)}^{r}\right)^{1/r},$$ where $\nabla
\phi=(\partial_1 \phi, \ldots,
\partial_n \phi)$ is the gradient of $\phi.$
Similarly we define the Sobolev-Lorentz $p,q$-quasinorm of $\phi$ by
$$||\phi||_{1, p,q; \Omega}=\left(||\phi||_{L^{p,q}(\Omega)}^{r}
+||\nabla \phi||_{L^{p,q}(\Omega;
\mathbf{R}^n)}^{r}\right)^{1/r},$$
Then $H^{1, (p,q)}(\Omega)$ is defined as the completion of
$$\{\phi \in C^{\infty}(\Omega): ||\phi||_{1, (p,q); \Omega} < \infty \}$$
with respect to the norm $||\cdot||_{1, (p,q); \Omega}.$
Throughout the paper we use $||\cdot||_{H^{1, (p,q)}(\Omega)}$ instead
of $||\cdot||_{1, (p,q); \Omega}$ and $||\cdot||_{H^{1, p,q}(\Omega)}$ instead of
$||\cdot||_{1, p,q; \Omega}.$

The Sobolev-Lorentz space $H_{0}^{1, (p,q)}(\Omega)$ is defined as the closure of
$C_{0}^{\infty}(\Omega)$ in $H^{1, (p,q)}(\Omega)$.

From the discussion in subsection 4.1 of our paper \cite{Cos4}, we have that the Sobolev-Lorentz spaces $H_{0}^{1, (p,q)}(\Omega)$ and $H^{1,(p,q)}(\Omega)$  are reflexive Banach spaces when $1<q<\infty.$ From the same discussion it follows that $H_{0}^{1,(p,1)}(\Omega)$ and $H^{1,(p,1)}(\Omega)$ are non-reflexive Banach spaces.

Let $u \in L_{loc}^{1}(\Omega).$ For $i=1,\ldots,n$ a function $v
\in L_{loc}^{1}(\Omega)$ is called the \textit{$i$th weak partial
derivative of} $u$ and we denote $v= \partial_{i} u$ if
$$\int_{\Omega} \varphi(x) v(x) \, dx=-\int_{\Omega} \partial_{i}\varphi(x) u(x) \, dx$$
for all $\varphi \in C_{0}^{\infty}(\Omega).$

We define the Sobolev-Lorentz space $W^{1,(p,q)}(\Omega)$ by
\begin{equation*}
W^{1,(p,q)}(\Omega)=L^{(p,q)}(\Omega) \cap \{ u: \partial_{i} u \in
L^{(p,q)}(\Omega), \, i=1, \ldots, n \}.
\end{equation*}
The space $W^{1,(p,q)}(\Omega)$ is equipped with the norm
\begin{equation*}
||u||_{W^{1, (p,q)}(\Omega)}=||u||_{L^{(p,q)}(\Omega)}+
\sum_{i=1}^{n} ||\partial_{i} u||_{L^{(p,q)}(\Omega)},
\end{equation*}
which is clearly equivalent to
\begin{equation*}
 \left(||u||_{L^{(p,q)}(\Omega)}^r+ ||\nabla u||_{L^{(p,q)}(\Omega;
\mathbf{R}^n)}^r\right)^{1/r},
\end{equation*}
where $r=\min(p,q).$ Here $\nabla u$ is the distributional gradient of $u.$

In \cite[Theorem 4.8]{Cos4} we showed that $H^{1,(p,\infty)}(\Omega) \subsetneq W^{1,(p,\infty)}(\Omega)$ and that the spaces $H_{0}^{1,(p,\infty)}(\Omega),$ $H^{1,(p,\infty)}(\Omega)$ and
$W^{1,(p,\infty)}(\Omega)$ are not reflexive. Furthermore, in \cite[Theorem 4.11]{Cos4} we proved that $H^{1,(p,q)}(\Omega)=W^{1,(p,q)}(\Omega)$ whenever $1 \le q<\infty.$

The corresponding local space $H_{loc}^{1, (p,q)}(\Omega)$ is defined in the obvious manner:  $u$ is in $H_{loc}^{1,(p,q)}(\Omega)$ if and only if $u$ is in $H^{1, (p,q)}(\Omega')$ for every open set $\Omega' \subset \subset \Omega.$

Similarly, the local space $W_{loc}^{1, (p,q)}(\Omega)$ is defined as follows:  $u$ is in $W_{loc}^{1,(p,q)}(\Omega)$ if and only if $u$ is in $W^{1, (p,q)}(\Omega')$ for
every open set $\Omega' \subset \subset \Omega.$

For more details on these spaces including their basic properties we refer the readers to Chapter 3 of our book \cite{Cos3} and to Section 4 of our article \cite{Cos4}.

\subsection{Product rule}

Next we record the following lemma which says that the product between a function $u$ in $H^{1,(p,q)}(\Omega)$ and a function $\varphi$ in $C_{0}^{\infty}(\Omega)$ yields a function in
$H_{0}^{1,(p,q)}(\Omega)$ if $1<p<\infty$ and $1 \le q \le \infty.$
See also \cite[Lemma 4.9 and Theorem 4.11]{Cos4}).

\begin{Lemma} \label{Product Rule for W1pq}
Let $\Omega \subset {\mathbf{R}}^n$ be an open set, where $n \ge 1$ is an integer.
Suppose that $1<p<\infty$ and $1 \le q \le \infty.$
Suppose that $u \in H^{1,(p,q)}(\Omega)$ and that $\varphi \in
C_{0}^{\infty}(\Omega).$ Then $u \varphi \in H_{0}^{1,(p,q)}(\Omega)$
and $\nabla(u \varphi)=u \nabla \varphi+ \varphi \nabla u.$

\end{Lemma}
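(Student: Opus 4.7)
The plan is to approximate $u$ by smooth functions, apply the classical product rule, and then pass to the limit using the boundedness of $\varphi$ and $\nabla\varphi$. By the very definition of $H^{1,(p,q)}(\Omega)$ as a completion, there is a sequence $u_k \in C^{\infty}(\Omega)$ with $\|u_k\|_{1,(p,q);\Omega}<\infty$ such that $u_k \to u$ in $L^{(p,q)}(\Omega)$ and $\nabla u_k \to \nabla u$ in $L^{(p,q)}(\Omega;\mathbf{R}^n)$. Since $\varphi \in C_{0}^{\infty}(\Omega)$, each $u_k\varphi$ belongs to $C_{0}^{\infty}(\Omega)$ and the classical product rule gives
\begin{equation*}
\nabla(u_k\varphi) = u_k\,\nabla\varphi + \varphi\,\nabla u_k.
\end{equation*}

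The key analytic fact I would use is the multiplier estimate
\begin{equation*}
\|fg\|_{L^{(p,q)}(\Omega)} \le \|g\|_{L^{\infty}(\Omega)}\,\|f\|_{L^{(p,q)}(\Omega)},
\end{equation*}
which follows because $|fg| \le \|g\|_\infty |f|$ almost everywhere implies $(fg)^{*} \le \|g\|_\infty f^{*}$ (by monotonicity of the nonincreasing rearrangement recorded in Section \ref{section Lorentz spaces}), hence $(fg)^{**} \le \|g\|_\infty f^{**}$, and the $(p,q)$-norm is defined from $f^{**}$. Set $M = \max(\|\varphi\|_\infty, \|\nabla\varphi\|_\infty)$; both are finite because $\varphi$ has compact support. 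Then
\begin{equation*}
\|u_k\varphi - u\varphi\|_{L^{(p,q)}(\Omega)} \le M\,\|u_k-u\|_{L^{(p,q)}(\Omega)} \longrightarrow 0,
\end{equation*}
and similarly
\begin{equation*}
\|\varphi\nabla u_k - \varphi\nabla u\|_{L^{(p,q)}(\Omega;\mathbf{R}^n)} \le M\,\|\nabla u_k - \nabla u\|_{L^{(p,q)}(\Omega;\mathbf{R}^n)} \longrightarrow 0,
\end{equation*}
\begin{equation*}
\|u_k\nabla\varphi - u\nabla\varphi\|_{L^{(p,q)}(\Omega;\mathbf{R}^n)} \le M\,\|u_k-u\|_{L^{(p,q)}(\Omega)} \longrightarrow 0.
\end{equation*}

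Combining these, $\nabla(u_k\varphi) \to u\,\nabla\varphi + \varphi\,\nabla u$ in $L^{(p,q)}(\Omega;\mathbf{R}^n)$ and $u_k\varphi \to u\varphi$ in $L^{(p,q)}(\Omega)$. Therefore the sequence $u_k\varphi \in C_{0}^{\infty}(\Omega)$ is Cauchy in the norm $\|\cdot\|_{H^{1,(p,q)}(\Omega)}$; since the latter is complete and $C_{0}^{\infty}(\Omega)$ is contained in $H_{0}^{1,(p,q)}(\Omega)$ by definition, its limit lies in $H_{0}^{1,(p,q)}(\Omega)$. By the uniqueness of limits in $L^{(p,q)}$ this limit is $u\varphi$ with distributional/weak gradient $u\,\nabla\varphi + \varphi\,\nabla u$, as claimed.

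The only place that requires any care is the case $q=\infty$: here $\|\cdot\|_{L^{p,\infty}}$ is a quasinorm but $\|\cdot\|_{L^{(p,\infty)}}$ is a genuine norm, and $L^{(p,\infty)}(\Omega)$ is a Banach space, so the completion argument still goes through verbatim; the multiplier inequality above was proved with the $f^{**}$ definition, so it is valid for all $1 \le q \le \infty$. The main obstacle, if any, is ensuring that this multiplier bound is legitimately available in the Lorentz setting, which is why I isolated the rearrangement step above. Everything else is a routine passage to the limit.
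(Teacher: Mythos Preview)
Your proof is correct. The paper does not actually prove this lemma in-text; it merely records the statement and cites \cite[Lemma 4.9 and Theorem 4.11]{Cos4} for the argument. Your approximation-by-smooth-functions approach with the $L^\infty$ multiplier bound on the $(p,q)$-norm is exactly the standard route one would expect for such a result, and it handles the $q=\infty$ case cleanly since you work throughout with the $f^{**}$-based norm $\|\cdot\|_{L^{(p,q)}}$, which is a genuine norm for all $1\le q\le\infty$.
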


\begin{Remark} \label{boundedness product rule for H1pq and W1pq}
Lemma \ref{Product Rule for W1pq} easily implies
\begin{eqnarray*}
||u \varphi||_{W^{1,(p,q)}(\Omega)} &\le& ||\varphi||_{L^{\infty}(\Omega)} ||u||_{L^{(p,q)}(\Omega)} +||\varphi||_{L^{\infty}(\Omega)} \left(\sum_{i=1}^{n} ||\partial_i u||_{L^{(p,q)}(\Omega)}\right) \\
& &+ \left(\sum_{i=1}^n ||\partial_i \varphi||_{L^{\infty}(\Omega)}\right) ||u||_{L^{(p,q)}(\Omega)}\\
&\le& \left(||\varphi||_{L^{\infty}(\Omega)}+ \sum_{i=1}^{n} ||\partial_i \varphi||_{L^{\infty}(\Omega)}\right) \, ||u||_{W^{1,(p,q)}(\Omega)}
\end{eqnarray*}
for every $u \in H^{1,(p,q)}(\Omega)$
and
\begin{eqnarray*}
||u \varphi||_{H^{1,(p,q)}(\Omega)} &=& \left(||u \varphi||_{L^{(p,q)}(\Omega)}^r + ||\nabla (u \varphi)||_{L^{(p,q)}(\Omega;{\mathbf{R}}^n)}^r\right)^{1/r}\\
&=& \left(||u \varphi||_{L^{(p,q)}(\Omega)}^r + ||\varphi \nabla u + u \nabla \varphi)||_{L^{(p,q)}(\Omega;{\mathbf{R}}^n)}^r\right)^{1/r}\\
&\le& \left(||u \varphi||_{L^{(p,q)}(\Omega)}^r + ||\varphi \nabla u||_{L^{(p,q)}(\Omega;{\mathbf{R}}^n)}^r\right)^{1/r}+ ||u \nabla \varphi||_{L^{(p,q)}(\Omega; {\mathbf{R}}^n)}\\
&\le&||\varphi||_{L^{\infty}(\Omega)} ||u||_{H^{1,(p,q)}(\Omega)} + ||\nabla \varphi||_{L^{\infty}(\Omega)} ||u||_{L^{(p,q)}(\Omega; {\mathbf{R}}^n)} \\
&\le& \left(||\varphi||_{L^{\infty}(\Omega)}+ ||\nabla \varphi||_{L^{\infty}(\Omega)}\right) ||u||_{H^{1,(p,q)}(\Omega)}
\end{eqnarray*}
for every $u \in H^{1,(p,q)}(\Omega);$ here $1 \le q \le \infty$ and $r=\min(p,q),$ like in the definition of the $||\cdot||_{H^{1,(p,q)}(\Omega)}$ norm.
\end{Remark}

\subsection{Reflexivity results}

Next we recall the following reflexivity results from \cite{Cos3} concerning the Sobolev-Lorentz spaces, valid for all integers $n \ge 1$ and for all $q$ in $(1,\infty).$ Both these results are standard applications of Mazur's lemma.

\begin{Theorem} \label{HKM93 Thm130} {\rm(See \cite[Theorem V.20]{Cos0} and \cite[Theorem 3.5.2]{Cos3}).}
Let $1<p,q<\infty.$ Suppose that $\mathcal{K}$ is a convex and closed set of $H^{1,(p,q)}(\Omega).$ If $u_j \in \mathcal{K}$ is a sequence and if $u \in L^{(p,q)}(\Omega)$ and $v \in L^{(p,q)}(\Omega; {\mathbf{R}}^n)$ are functions such that $u_j \rightarrow u$ weakly in $L^{p,q}(\Omega)$ and $\nabla u_j \rightarrow \nabla u$ weakly in $L^{p,q}(\Omega; {\mathbf{R}}^n),$ then
$u \in \mathcal{K}$ and $v=\nabla u.$

\end{Theorem}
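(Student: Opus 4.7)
The statement appears to contain a small typo: in the hypotheses one should read $\nabla u_{j} \rightarrow v$ weakly in $L^{p,q}(\Omega;{\mathbf{R}}^{n}),$ since the conclusion precisely asserts that $v=\nabla u.$ With this reading, the plan is to reduce everything to a standard Mazur-lemma argument inside the reflexive Banach space $H^{1,(p,q)}(\Omega).$

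\textbf{Step 1: Boundedness in $H^{1,(p,q)}(\Omega).$} The hypotheses say that $u_{j} \to u$ and $\nabla u_{j} \to v$ weakly in $L^{p,q}.$ Weak convergence in a normed space implies norm boundedness by the uniform boundedness principle, so $\sup_{j} \|u_{j}\|_{L^{p,q}(\Omega)}<\infty$ and $\sup_{j} \|\nabla u_{j}\|_{L^{p,q}(\Omega;{\mathbf{R}}^{n})}<\infty.$ By the equivalence $\|\cdot\|_{L^{p,q}} \asymp \|\cdot\|_{L^{(p,q)}}$ recorded after Lemma IV.4.5 of \cite{BS}, the sequence $\{u_{j}\}$ is bounded in $H^{1,(p,q)}(\Omega)$ with respect to $\|\cdot\|_{H^{1,(p,q)}(\Omega)}.$

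\textbf{Step 2: Extracting a weak limit in $H^{1,(p,q)}(\Omega)$ and identifying it.} Since $1<q<\infty,$ the space $H^{1,(p,q)}(\Omega)$ is reflexive (as recalled in Section \ref{section Sobolev-Lorentz spaces}). By the Eberlein--\v{S}mulian theorem there exists a subsequence $\{u_{j_{k}}\}$ and some $w \in H^{1,(p,q)}(\Omega)$ with $u_{j_{k}} \rightharpoonup w$ weakly in $H^{1,(p,q)}(\Omega).$ Weak convergence in $H^{1,(p,q)}(\Omega)$ implies $u_{j_{k}} \rightharpoonup w$ weakly in $L^{(p,q)}(\Omega)$ and $\nabla u_{j_{k}} \rightharpoonup \nabla w$ weakly in $L^{(p,q)}(\Omega;{\mathbf{R}}^{n}),$ and hence also in $L^{p,q}$ by the equivalence of quasinorms. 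Combining this with the assumed weak limits $u$ and $v$ of the full sequence and invoking uniqueness of weak limits in the Hausdorff weak topology, we obtain $w=u$ a.e.\ and $\nabla w=v$ a.e. In particular $u \in H^{1,(p,q)}(\Omega)$ and $v=\nabla u,$ which establishes the second half of the conclusion.

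\textbf{Step 3: Membership in $\mathcal{K}$ via Mazur's lemma.} From Step 2, $u_{j_{k}} \rightharpoonup u$ weakly in $H^{1,(p,q)}(\Omega).$ By Mazur's lemma there exists a sequence of convex combinations
\[
\widetilde{u}_{m} = \sum_{k=m}^{N_{m}} \lambda_{m,k}\, u_{j_{k}}, \qquad \lambda_{m,k} \ge 0, \quad \sum_{k=m}^{N_{m}} \lambda_{m,k} = 1,
\]
such that $\widetilde{u}_{m} \to u$ strongly in $H^{1,(p,q)}(\Omega).$ Since $u_{j_{k}} \in \mathcal{K}$ for all $k$ and $\mathcal{K}$ is convex, each $\widetilde{u}_{m}$ lies in $\mathcal{K}.$ Since $\mathcal{K}$ is closed in the norm topology of $H^{1,(p,q)}(\Omega),$ the strong limit $u$ belongs to $\mathcal{K},$ completing the proof.

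The only conceptual step is the reflexivity of $H^{1,(p,q)}(\Omega)$ for $1<q<\infty,$ already quoted in the paper; once this is in hand the argument is a textbook application of uniform boundedness, Eberlein--\v{S}mulian, and Mazur. The minor wrinkle to watch out for is keeping track of whether one is working with the $p,q$-quasinorm or the $(p,q)$-norm, but the two are equivalent and all weak convergences transfer freely between them.
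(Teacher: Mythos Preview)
Your proof is correct and follows the route the paper itself indicates: the text does not give a proof of this theorem but simply recalls it with the remark that both reflexivity results are ``standard applications of Mazur's lemma,'' citing \cite[Theorem V.20]{Cos0} and \cite[Theorem 3.5.2]{Cos3}. Your detour through Eberlein--\v{S}mulian in Step~2 is slightly more elaborate than necessary---one can apply Mazur's lemma directly in the product space $L^{(p,q)}(\Omega)\times L^{(p,q)}(\Omega;\mathbf{R}^{n})$ to the sequence $(u_j,\nabla u_j)$, obtain strongly convergent convex combinations, and read off both $v=\nabla u$ and $u\in\mathcal{K}$ in one stroke---but this is a cosmetic difference and your argument is sound as written. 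Your observation about the typo in the statement (the hypothesis should read $\nabla u_j\rightarrow v$) is also correct.
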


\begin{Theorem} \label{HKM93 Thm131} {\rm(See \cite[Theorem V.21]{Cos0} and \cite[Theorem 3.5.3]{Cos3}).}
Let $1<p,q<\infty.$ Suppose that $u_j$ is a bounded sequence in $H^{1,(p,q)}(\Omega).$ Then there is a subsequence $u_{j_i}$ and a function $u \in H^{1,(p,q)}(\Omega)$ such that $u_{j_i} \rightarrow u$ weakly in $L^{p,q}(\Omega)$ and
$\nabla u_{j_i} \rightarrow \nabla u$ weakly in $L^{p,q}(\Omega; {\mathbf{R}}^n).$ Moreover, if $u_j \in H_{0}^{1,(p,q)}(\Omega)$ for all $j \ge 1,$ then $u \in H_{0}^{1,(p,q)}(\Omega).$

\end{Theorem}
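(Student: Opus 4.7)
The plan is to deduce Theorem \ref{HKM93 Thm131} directly from Theorem \ref{HKM93 Thm130} combined with the reflexivity of the Lorentz spaces $L^{p,q}(\Omega; \mathbf{R}^m)$ for $1<q<\infty$. Since the Sobolev-Lorentz $(p,q)$-norm controls both $\|u\|_{L^{(p,q)}(\Omega)}$ and $\|\nabla u\|_{L^{(p,q)}(\Omega; \mathbf{R}^n)}$, and since the $(p,q)$- and $p,q$-quasinorms are equivalent for fixed $p$ and $q$, the hypothesis that $u_j$ is bounded in $H^{1,(p,q)}(\Omega)$ gives uniform bounds
\[
\sup_{j} \|u_j\|_{L^{p,q}(\Omega)} < \infty, \qquad \sup_j \|\nabla u_j\|_{L^{p,q}(\Omega; \mathbf{R}^n)} < \infty.
\]

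First I would invoke reflexivity: by the results recalled in the paragraph following Definition \ref{weak convergence in Lpq q finite}, the spaces $L^{p,q}(\Omega)$ and $L^{p,q}(\Omega;\mathbf{R}^n)$ are reflexive Banach spaces because $1<q<\infty$. By the Eberlein--\v{S}mulian theorem one can therefore extract a subsequence (which I will still denote by $u_{j_i}$ after a standard diagonal procedure) such that $u_{j_i} \rightarrow u$ weakly in $L^{p,q}(\Omega)$ for some $u \in L^{p,q}(\Omega)$ and $\nabla u_{j_i} \rightarrow v$ weakly in $L^{p,q}(\Omega;\mathbf{R}^n)$ for some $v \in L^{p,q}(\Omega; \mathbf{R}^n)$.

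Second, to upgrade $u$ to a member of $H^{1,(p,q)}(\Omega)$ and to identify $v$ as $\nabla u$, I would apply Theorem \ref{HKM93 Thm130} with the convex, norm-closed set $\mathcal{K} := H^{1,(p,q)}(\Omega)$ (trivially a closed convex subset of itself). Since $u_{j_i} \in \mathcal{K}$ and the weak convergences above hold, the conclusion of that theorem gives $u \in H^{1,(p,q)}(\Omega)$ and $v = \nabla u$, which is exactly what is claimed.

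For the moreover part, assume $u_j \in H_0^{1,(p,q)}(\Omega)$ for all $j$. Then $\mathcal{K} := H_0^{1,(p,q)}(\Omega)$ is convex (being a linear subspace) and closed in $H^{1,(p,q)}(\Omega)$ (by definition it is the norm-closure of $C_0^{\infty}(\Omega)$). A second application of Theorem \ref{HKM93 Thm130} to this $\mathcal{K}$ yields $u \in H_0^{1,(p,q)}(\Omega)$, completing the proof. There is no genuine obstacle here: the substance of the argument is already contained in Theorem \ref{HKM93 Thm130} (where Mazur's lemma is used to pass from a weak limit in $L^{p,q}$ back into the norm-closed convex set $\mathcal{K}$); the present theorem only needs the additional ingredient of reflexivity to produce the weak limit in the first place, and care in choosing $\mathcal{K}$ to handle both the general and the $H_0^{1,(p,q)}$ cases uniformly.
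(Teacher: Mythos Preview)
The paper does not actually prove this theorem; it merely states it with references to \cite{Cos0} and \cite{Cos3} and remarks beforehand that both Theorems \ref{HKM93 Thm130} and \ref{HKM93 Thm131} are ``standard applications of Mazur's lemma.'' Your proposal is correct and is precisely the standard argument the paper alludes to: reflexivity of $L^{p,q}$ for $1<q<\infty$ supplies the weak limits, and Theorem \ref{HKM93 Thm130} (itself a Mazur-lemma consequence) identifies the limit in the appropriate closed convex set, first $H^{1,(p,q)}(\Omega)$ and then $H_0^{1,(p,q)}(\Omega)$.
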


\section{Sobolev-Lorentz Capacity}
\label{section Sobolev-Lorentz capacity}

This section is based on Chapter 4 of our book \cite{Cos3}.
In \cite{Cos3} we studied the Sobolev-Lorentz relative and global capacities for
$1<p<\infty,$ $1 \le q \le \infty$ and $n>1$ integer. There we developed a capacity
theory based on the definition of Sobolev functions on $\mathbf{R}^n$ with respect to the Lorentz norm.
Basic properties of capacity, including monotonicity, countable subadditivity and several convergence
results were included there. All those results were proved in \cite{Cos3} for $n \ge 2$ but they can be extended to the case $n=1.$ We do it here, in this section of our paper.

\subsection{The Sobolev-Lorentz $(p,q)$ Relative Capacity}

Let $n \ge 1$ be an integer. Suppose $1<p<\infty$ and $1 \le q \le \infty.$ Let $\Omega \subset \mathbf{R}^n$ be a bounded open set
and let $E$ be a subset of $\Omega.$ The Sobolev-Lorentz $(p,q)$ relative capacity of the pair $(E, \Omega)$ is denoted
$${\mathrm{cap}}_{(p,q)}(E, \Omega)=\inf \, \{||\nabla u||_{L^{(p,q)}(\Omega; \mathbf{R}^n)}^{p}:
u \in {\mathcal{A}}(E, \Omega)\},$$ where
\begin{equation*}
{\mathcal{A}}(E, \Omega)=\{ u \in H_{0}^{1,(p,q)}(\Omega): u \ge 1\
\mbox{ in a neighborhood of } E\}.
\end{equation*}
We call ${\mathcal{A}}(E,\Omega)$ the \textit{set of admissible functions for the condenser} $(E, \Omega).$ If ${\mathcal{A}}(E,\Omega)=\emptyset$, we set $\mbox{cap}_{(p,q)}(E, \Omega)=\infty.$

Since $H_{0}^{1,(p,q)}(\Omega)$ is closed under truncations from below by $0$ and from above by $1$ and since these truncations do not increase the $(p,q)$-norm of the distributional gradients, it is enough to consider only those admissible functions $u$ for which $0 \le u \le 1$.

\subsubsection{Basic Properties of the $(p,q)$ Relative Capacity}

Usually, a capacity is a monotone and subadditive set function. The
following theorem will show, among other things, that this is true
in the case of the $(p,q)$ relative capacity. In our thesis \cite{Cos0}
we studied only the case $1<n=p<\infty.$ In \cite{Cos3} we extended
the results from \cite{Cos0} to the case $1<p<\infty$ and $n>1.$  The following
theorem generalizes Theorem V.23 from \cite{Cos0} and Theorem 4.1.1 from
\cite{Cos3} to the case $1<p<\infty$ and $n=1.$

\begin{Theorem}\label{Cap Thm (p,q) relative capacity}
{\rm(See \cite[Theorem V.23]{Cos0} and \cite[Theorem 4.1.1]{Cos3}).}
Let $n \ge 1$ be an integer. Suppose $1<p<\infty$ and $1 \le q \le \infty.$
Let $\Omega \subset \mathbf{R}^n$ be a bounded open set. The set
function $E \mapsto {\mathrm{cap}}_{(p,q)}(E, \Omega),$ $E \subset
\Omega,$ enjoys the following properties:

\par {\rm{(i)}}  If $E_{1} \subset E_{2},$ then ${\mathrm{cap}}_{(p,q)}(E_{1},
\Omega) \le {\mathrm{cap}}_{(p,q)}(E_{2}, \Omega).$

\par {\rm{(ii)}} If $\Omega_{1} \subset \Omega_{2}$ are open and bounded and $E \subset
\Omega_{1},$ then $${\mathrm{cap}}_{(p,q)}(E, \Omega_{2}) \le
{\mathrm{cap}}_{(p,q)}(E, \Omega_{1}).$$

\par {\rm{(iii)}} ${\mathrm{cap}}_{(p,q)}(E, \Omega)=\inf \, \{ {\mathrm{cap}}_{(p,q)}(U, \Omega):
E \subset U \subset \Omega, \,U \mbox{ open} \}.$

\par {\rm{(iv)}} If $K_{i}$ is a decreasing sequence of compact subsets of
$\Omega$ with $K=\bigcap_{i=1}^{\infty} K_{i},$ then
$${\mathrm{cap}}_{(p,q)}(K, \Omega)=\lim_{i \rightarrow \infty}
{\mathrm{cap}}_{(p,q)}(K_{i}, \Omega).$$

\par {\rm{(v)}} Suppose that $1<q<\infty.$ If $E_{1} \subset E_{2} \subset \ldots \subset E=\bigcup_{i=1}^{\infty} E_{i} \subset
\Omega,$ then
$${\mathrm{cap}}_{(p,q)}(E, \Omega)=\lim_{i \rightarrow \infty}
{\mathrm{cap}}_{(p,q)}(E_{i}, \Omega).$$

\par {\rm{(vi)}} If $E=\bigcup_{i=1}^{\infty} E_{i} \subset \Omega,$ then
$${\mathrm{cap}}_{(p,q)}(E, \Omega)^{1/p} \le
\sum_{i=1}^{\infty} {\mathrm{cap}}_{(p,q)}(E_{i}, \Omega)^{1/p}.$$

\end{Theorem}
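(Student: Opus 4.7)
The plan is to treat parts (i)--(iv) and (vi) as routine consequences of the definition of $\mathcal{A}(E, \Omega)$ and the Banach-space structure of $H_0^{1,(p,q)}(\Omega)$, while (v) is the main difficulty and requires the reflexivity machinery of Theorems \ref{HKM93 Thm130} and \ref{HKM93 Thm131} together with Mazur's lemma.

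Properties (i) and (ii) are immediate: $E_1 \subset E_2$ yields $\mathcal{A}(E_2, \Omega) \subset \mathcal{A}(E_1, \Omega)$, and extension by zero gives an embedding $\mathcal{A}(E, \Omega_1) \hookrightarrow \mathcal{A}(E, \Omega_2)$ when $\Omega_1 \subset \Omega_2$ that preserves the $L^{(p,q)}$-norm of the gradient, because $C_0^\infty(\Omega_1) \subset C_0^\infty(\Omega_2)$. For (iii), the inequality $\ge$ is (i); conversely, any $u \in \mathcal{A}(E, \Omega)$ is automatically admissible for the open neighborhood of $E$ witnessing $u \in \mathcal{A}(E, \Omega)$. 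Part (iv) combines (i) with (iii): given $\varepsilon > 0$, use (iii) to find an open $U \supset K$ with $\mathrm{cap}_{(p,q)}(U, \Omega) < \mathrm{cap}_{(p,q)}(K, \Omega) + \varepsilon$; since the $K_i$ are nested compacta decreasing to $K$, we have $K_i \subset U$ for all large $i$, whence $\mathrm{cap}_{(p,q)}(K_i, \Omega) \le \mathrm{cap}_{(p,q)}(U, \Omega) < \mathrm{cap}_{(p,q)}(K, \Omega) + \varepsilon$.

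For (vi), assume the right-hand side is finite, fix $\varepsilon > 0$, and select $u_i \in \mathcal{A}(E_i, \Omega)$ with $0 \le u_i \le 1$ and $\|\nabla u_i\|_{L^{(p,q)}} \le \mathrm{cap}_{(p,q)}(E_i, \Omega)^{1/p} + \varepsilon/2^i$. The Sobolev-Lorentz-Poincar\'{e} inequality on the bounded set $\Omega$ gives $\|u_i\|_{L^{(p,q)}} \le C\,\|\nabla u_i\|_{L^{(p,q)}}$, so the series $\sum_i u_i$ converges absolutely in the Banach space $H_0^{1,(p,q)}(\Omega)$ to some $S$, with $|\nabla S| \le \sum_i |\nabla u_i|$ a.e. The truncation $v := \min(S, 1)$ belongs to $H_0^{1,(p,q)}(\Omega)$, equals $1$ on the union of the open neighborhoods of the $E_i$ on which $u_i \ge 1$ (a true open neighborhood of $E$), and satisfies $|\nabla v| \le |\nabla S|$, so $v \in \mathcal{A}(E, \Omega)$. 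The triangle inequality for $\|\cdot\|_{L^{(p,q)}}$ then gives
\begin{equation*}
\mathrm{cap}_{(p,q)}(E, \Omega)^{1/p} \le \|\nabla v\|_{L^{(p,q)}} \le \sum_i \|\nabla u_i\|_{L^{(p,q)}} \le \sum_i \mathrm{cap}_{(p,q)}(E_i, \Omega)^{1/p} + \varepsilon,
\end{equation*}
and letting $\varepsilon \to 0$ finishes the proof.

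Part (v) is the main obstacle, and the hypothesis $1 < q < \infty$ enters precisely because that is the range in which $H_0^{1,(p,q)}(\Omega)$ is reflexive. One direction is (i); for the reverse, assume $L := \lim_i \mathrm{cap}_{(p,q)}(E_i, \Omega)$ is finite and pick near-optimal $u_i \in \mathcal{A}(E_i, \Omega)$ with $0 \le u_i \le 1$. Boundedness of $\Omega$ gives $\|u_i\|_{L^{(p,q)}} \le C |\Omega|^{1/p}$, so $\{u_i\}$ is bounded in $H_0^{1,(p,q)}(\Omega)$, and Theorem \ref{HKM93 Thm131} extracts a subsequence converging weakly to some $u \in H_0^{1,(p,q)}(\Omega)$. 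Mazur's lemma then produces convex combinations $w_k = \sum_{j \ge k} \alpha_{k,j} u_j$ converging to $u$ in $H^{1,(p,q)}$-norm, and (after a further subsequence) a.e.\ on $\Omega$; the monotonicity $E_j \subset E_{j+1}$ forces each $w_k$ to be $\ge 1$ on a neighborhood of $E_k$, whence $u \ge 1$ a.e.\ on $E$. The delicate point I expect to require the most care is converting this a.e.\ lower bound into an honest admissible function on an open neighborhood of $E$: this is handled by the perturbation $u_\delta := \min( (1-\delta)^{-1} (u - \delta)_+, 1)$ together with part (iii), at the cost of a factor $(1-\delta)^{-p}$ that vanishes as $\delta \to 0$. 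Weak lower semicontinuity of $\|\cdot\|_{L^{(p,q)}}$ then yields $\|\nabla u\|^p_{L^{(p,q)}} \le L$, completing the argument.
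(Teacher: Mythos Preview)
Your treatment of (i)--(iv) and (vi) is correct and standard; the appeal to a Poincar\'e inequality on the bounded set $\Omega$ in (vi) is legitimate for $H_0^{1,(p,q)}(\Omega)$. The paper itself gives no argument at all, simply referring to \cite[Theorem~4.1.1]{Cos3} and noting that the case $n=1$ is identical, so on these parts your write-up supplies more than the paper does.

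The gap is in (v), precisely at the step you flag as ``delicate.'' Your perturbation $u_\delta=\min\bigl((1-\delta)^{-1}(u-\delta)_+,\,1\bigr)$ does not do the job: $u_\delta=1$ exactly on $\{u\ge 1\}$, which need not be open since $u$ is only a Sobolev--Lorentz function with no continuity in general (think $1<p<n$), and part (iii) gives you nothing because you have no candidate open neighbourhood of $E$. More fundamentally, ``$u\ge 1$ a.e.\ on $E$'' is vacuous when $|E|=0$, which is exactly the regime of interest---a single point has positive $(n,q)$-capacity in $\mathbf{R}^n$ for $n\ge 2$. What is missing is a quasi-continuity ingredient: one shows that the Mazur sequence $w_k$ converges to a fixed representative of $u$ not merely a.e.\ but $(p,q)$-quasi-everywhere, so that $u\ge 1$ q.e.\ on $E$, and one then needs the separate fact that the capacity is unchanged if $\mathcal A(E,\Omega)$ is enlarged to the closed convex class $\{v\in H_0^{1,(p,q)}(\Omega):\tilde v\ge 1\text{ q.e.\ on }E\}$, $\tilde v$ the quasi-continuous representative. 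Both facts are standard in the $L^p$ theory (cf.\ \cite[Chapter~4]{HKM}) and carry over to the reflexive Lorentz range, but they are genuine lemmas, not consequences of the truncation $u_\delta$. The paper defers the whole proof to \cite{Cos3}, so its exact route is not visible here; however, the later proof of Theorem~\ref{MCT for the (p,1) and p,1 relative capacities}, which handles the non-reflexive endpoint $q=1$ by exploiting \emph{continuity} of the admissible functions when $p\ge n$ to obtain $u=1$ pointwise on $E$, confirms that securing pointwise (not a.e.) control on $E$ is the crux you have not yet addressed.
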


\begin{proof} This result was proved in \cite{Cos3} for $n \ge 2.$ See \cite[Theorem 4.1.1]{Cos3}.
The proof of the case $n=1$ is very similar to the proof of \cite[Theorem 4.1.1]{Cos3} and omitted.

\end{proof}

The set function $\mathrm{cap}_{(p,q)}(\cdot, \Omega)$ satisfies properties (i), (iv), and (v) of Theorem \ref{Cap Thm (p,q) relative capacity}
whenever $1<p,q<\infty$ and $\Omega$ is a bounded open set in $\mathbf{R}^n,$ where $n \ge 1$ is an integer. Thus, $\mathrm{cap}_{(p,q)}(\cdot, \Omega)$
is a Choquet capacity (relative to $\Omega$) whenever $1<p,q<\infty$ and $\Omega$ is a bounded open set in $\mathbf{R}^n,$ where $n \ge 1$ is an integer. We may thus invoke an important capacitability theorem of Choquet and state the following result. See Doob \cite[Appendix II]{Doo}.

\begin{Theorem}\label{Choquet (p,q) relative capacity Thm 1<q<infty}
Let $\Omega$ be a bounded open set in $\mathbf{R}^n,$ where
$n \ge 1$ is an integer. Suppose $1<p,q<\infty.$
The set function $E \mapsto {\mathrm{cap}}_{(p,q)}(E, \Omega),$
$E \subset \Omega,$ is a Choquet capacity. In particular, all
Borel subsets (in fact, all analytic) subsets $E$ of
$\Omega$ are capacitable, i.e.
$${\mathrm{cap}}_{(p,q)}(E, \Omega)=\sup \, \{ {\mathrm{cap}}_{(p,q)}(K,
\Omega): K \subset E \mbox{ compact} \}.$$
\end{Theorem}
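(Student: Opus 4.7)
The statement is essentially a direct application of Choquet's capacitability theorem, and the heavy lifting has already been done in Theorem \ref{Cap Thm (p,q) relative capacity}. So my plan is to verify that the three defining axioms of a Choquet capacity are in place and then invoke the external capacitability theorem from Doob \cite[Appendix II]{Doo}.

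Recall that a nonnegative set function $T$ defined on the subsets of $\Omega$ is a Choquet capacity provided: (a) it is monotone, i.e.\ $E_1 \subset E_2$ implies $T(E_1) \le T(E_2)$; (b) it is continuous from below on arbitrary increasing sequences, i.e.\ if $E_i \nearrow E$ then $T(E_i) \nearrow T(E)$; and (c) it is continuous from above on decreasing sequences of compact sets, i.e.\ if $K_i \searrow K$ with each $K_i$ compact, then $T(K_i) \searrow T(K)$. The plan is to observe that these three conditions are exactly parts (i), (v), and (iv) of Theorem \ref{Cap Thm (p,q) relative capacity}, all of which hold under the hypothesis $1<p,q<\infty$ (this is precisely the range in which part (v) was established in the preceding theorem, so the restriction on $q$ is essential and the hypothesis of the current theorem is the natural one).

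First I would explicitly identify (i), (iv), and (v) of Theorem \ref{Cap Thm (p,q) relative capacity} with axioms (a), (c), (b) of a Choquet capacity, respectively, noting that in our setting the ambient space is the bounded open set $\Omega$, so compactness of $K$ and the $K_i$ is understood relative to $\Omega$ (equivalently, as compact subsets of $\mathbf{R}^n$ contained in $\Omega$). Since all three properties have been verified in the generality $1<p<\infty$, $1<q<\infty$, $n\ge 1$, $\mathrm{cap}_{(p,q)}(\cdot,\Omega)$ qualifies as a Choquet capacity relative to $\Omega$.

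Then I would invoke Choquet's capacitability theorem (as formulated in Doob \cite[Appendix II]{Doo}): any $\mathcal{K}$-analytic set with respect to the family of compact subsets of $\Omega$ is capacitable for any Choquet capacity. Since every Borel (and more generally every analytic) subset of $\Omega$ is $\mathcal{K}$-analytic, this yields
\[
\mathrm{cap}_{(p,q)}(E,\Omega) \;=\; \sup\{\mathrm{cap}_{(p,q)}(K,\Omega) : K\subset E \text{ compact}\}
\]
for every analytic $E\subset \Omega$, which is the desired conclusion. There is no real obstacle in this proof; the content lies entirely in Theorem \ref{Cap Thm (p,q) relative capacity}, and in particular the hard part is (v), which uses reflexivity of $H^{1,(p,q)}_{0}(\Omega)$ (valid for $1<q<\infty$) together with Mazur's lemma via Theorems \ref{HKM93 Thm130} and \ref{HKM93 Thm131}. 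I would close by remarking on this, so the reader sees why the restriction $1<q<\infty$ appears and why the Choquet property for $q=1$ and $q=\infty$ cannot be obtained by the same route (and is in fact the subject of later sections of the paper).
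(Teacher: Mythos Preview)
Your proposal is correct and follows essentially the same approach as the paper: the paper does not give a separate proof but simply notes that properties (i), (iv), and (v) of Theorem \ref{Cap Thm (p,q) relative capacity} are the axioms of a Choquet capacity for $1<p,q<\infty$ and then invokes Choquet's capacitability theorem via Doob \cite[Appendix II]{Doo}. Your additional remarks on why the restriction $1<q<\infty$ is needed are accurate and consistent with the paper's narrative.
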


The set function $\mathrm{cap}_{(p,q)}(\cdot, \Omega)$ satisfies properties (i) and (iv) of Theorem \ref{Cap Thm (p,q) relative capacity}
whenever $q=1$ or $q=\infty.$ Like in Theorem \ref{Cap Thm (p,q) relative capacity}, the set $\Omega$ is bounded and open in $\mathbf{R}^n,$ where
$n \ge 1$ is an integer.

\begin{Question} \label{Question Choquet (p,q) relative capacity}
Let $\Omega$ be a bounded open set in $\mathbf{R}^n,$ where
$n \ge 1$ is an integer. Suppose $1<p<\infty.$
Is $\mathrm{cap}_{(p,q)}(\cdot, \Omega)$ a Choquet capacity when
$q=1$ or when $q=\infty$ ?
\end{Question}

We obtain a partial positive result later. Namely, we show later that if $\Omega \subset {\mathbf{R}}^n$
is a bounded open set, then $\mathrm{cap}_{(p,1)}(\cdot, \Omega)$ is a Choquet capacity
whenever $1 \le n<p<\infty$ or $1<n=p<\infty.$

\begin{Remark}\label{cap(K, Omega)=cap(bdry K, Omega) when 1 le q le infty and (p,q)}
Suppose $1 \le q \le \infty.$ The definition of the $(p,q)$-capacity easily implies
\begin{equation*}
{\mathrm{cap}}_{(p,q)}(K, \Omega)={\mathrm{cap}}_{(p,q)}(\partial K,
\Omega)
\end{equation*}
whenever $K$ is a compact set in $\Omega.$
\end{Remark}

\subsection{The Sobolev-Lorentz $p,q$ Relative Capacity}

Let $n \ge 1$ be an integer. Suppose $1<p<\infty$ and $1\le q \le \infty.$ We can introduce the ${p,q}$ relative capacity the way we
introduced the $(p,q)$ relative capacity. Let $\Omega \subset \mathbf{R}^n$ be a bounded and open set and let $E$ be a subset of $\Omega.$ We can define the Sobolev-Lorentz ${p,q}$ relative capacity of the pair $(E, \Omega)$ by
$${\mathrm{cap}}_{p,q}(E, \Omega)=
\inf \, \{||\nabla u||_{L^{p,q}(\Omega; \mathbf{R}^n)}^{p}: u \in
{\mathcal{A}}(E, \Omega)\}$$ where $${\mathcal{A}}(E, \Omega)=\{ u
\in H_{0}^{1,(p,q)}(\Omega): u \ge 1 \mbox{ in a neighborhood of }
E\}.$$ Like before, we call ${\mathcal{A}}(E, \Omega)$ the
\textit{set of admissible functions for the condenser}
$(E, \Omega).$ If ${\mathcal{A}}(E,\Omega)=\emptyset$, we set
$\mbox{cap}_{p,q}(E, \Omega)=\infty.$

Since $H_{0}^{1,(p,q)}(\Omega)$ is closed under truncations from below by $0$ and from
above by $1$ and since these truncations do not increase the $p,q$-quasinorm of the distributional gradients, it is enough to consider only those admissible functions $u$ for which $0 \le u \le 1$.

\subsubsection{Basic Properties of the $p,q$ Relative Capacity}

Usually, a capacity is a monotone and subadditive set function. The
following theorem will show, among other things, that this is true
in the case of the $p,q$ relative capacity. In our book \cite{Cos3} we studied the case
$1<p<\infty$ and $n \ge 2.$ We extend these results to the case $n=1.$ The following theorem
generalizes Theorem 3.2 from Costea-Maz'ya \cite{CosMaz} and Theorem 4.2.2 from \cite{Cos3}.

\begin{Theorem}\label{Cap Thm p,q relative capacity}
{\rm(See Costea-Maz'ya \cite[Theorem 3.2]{CosMaz} and \cite[Theorem 4.2.2]{Cos3}).}
Let $n \ge 1$ be an integer. Suppose $1<p<\infty$ and $1\le q \le \infty.$ Let
$\Omega \subset \mathbf{R}^n$ be a bounded open set.
The set function $E \mapsto {\mathrm{cap}}_{p,q}(E,
\Omega),$ $E \subset \Omega,$ enjoys the following properties:

\par {\rm{(i)}}  If $E_{1} \subset E_{2} \subset \Omega,$ then ${\mathrm{cap}}_{p,q}(E_{1},
\Omega) \le {\mathrm{cap}}_{p,q}(E_{2}, \Omega).$

\par {\rm{(ii)}} If $\Omega_{1} \subset \Omega_{2} \subset \mathbf{R}^n$ are open and $E \subset
\Omega_{1},$ then $${\mathrm{cap}}_{p,q}(E, \Omega_{2}) \le
{\mathrm{cap}}_{p,q}(E, \Omega_{1}).$$

\par {\rm{(iii)}} ${\mathrm{cap}}_{p,q}(E, \Omega)=\inf \, \{ {\mathrm{cap}}_{p,q}(U, \Omega):
E \subset U \subset \Omega, \,U \mbox{ open} \}.$

\par {\rm{(iv)}} If $K_{i}$ is a decreasing sequence of compact subsets of
$\Omega$ with $K=\bigcap_{i=1}^{\infty} K_{i},$ then
$${\mathrm{cap}}_{p,q}(K, \Omega)=\lim_{i \rightarrow \infty}
{\mathrm{cap}}_{p,q}(K_{i}, \Omega).$$

\par {\rm{(v)}} Suppose that $1<q \le p.$ If $E_{1} \subset E_{2} \subset \ldots \subset E=\bigcup_{i=1}^{\infty} E_{i} \subset
\Omega,$ then
$${\mathrm{cap}}_{p,q}(E, \Omega)=\lim_{i \rightarrow \infty}
{\mathrm{cap}}_{p,q}(E_{i}, \Omega).$$

\par {\rm{(vi)}} Suppose that $1 \le q \le p.$ If $E=\bigcup_{i=1}^{\infty} E_{i} \subset \Omega,$ then
$${\mathrm{cap}}_{p,q}(E, \Omega)^{q/p} \le \sum_{i=1}^{\infty} {\mathrm{cap}}_{p,q}(E_{i}, \Omega)^{q/p}.$$

\par {\rm{(vii)}} Suppose that $p < q < \infty.$ If $E=\bigcup_{i=1}^{\infty} E_{i} \subset \Omega,$ then $${\mathrm{cap}}_{p,q}(E, \Omega) \le \sum_{i=1}^{\infty} {\mathrm{cap}}_{p,q}(E_{i}, \Omega).$$

\par {\rm{(viii)}} Suppose that $q=\infty.$ Let $k \ge 1$ be an integer. If $E=\bigcup_{i=1}^{k} E_{i} \subset \Omega,$ then $${\mathrm{cap}}_{p,q}(E, \Omega) \le \sum_{i=1}^{k} {\mathrm{cap}}_{p,q}(E_{i}, \Omega).$$

\par {\rm{(ix)}} Suppose that $1 \le q \le \infty.$ If $\Omega_1$ and $\Omega_2$ are two disjoint open sets and $E \subset \Omega_1,$
then $${\mathrm{cap}}_{p,q}(E, \Omega_1 \cup \Omega_2)={\mathrm{cap}}_{p,q}(E, \Omega_1).$$

\par {{\rm(x)}} Suppose that $1 \le q \le p.$ Suppose that $\Omega_1,\ldots, \Omega_k$ are pairwise disjoint open sets and $E_i$ are subsets of $\Omega_i$ for $i=1,\ldots, k.$ If $E=\bigcup_{i=1}^{k} E_i$ and $\Omega=\bigcup_{i=1}^{k} \Omega_i,$ then
 $${\mathrm{cap}}_{p,q}(E, \Omega) \ge \sum_{i=1}^{k} {\mathrm{cap}}_{p,q}(E_i, \Omega_i).$$

\par {{\rm(xi)}} Suppose that $p < q < \infty.$  Suppose that $\Omega_i,\ldots, \Omega_k$ are pairwise disjoint open sets and $E_i$ are subsets of $\Omega_i$ for $i=1,\ldots, k.$ If $E=\bigcup_{i=1}^{k} E_i$ and $\Omega=\bigcup_{i=1}^{k} \Omega_i,$ then
 $${\mathrm{cap}}_{p,q}(E, \Omega)^{q/p} \ge \sum_{i=1}^{k} {\mathrm{cap}}_{p,q}(E_i, \Omega_i)^{q/p}.$$

\end{Theorem}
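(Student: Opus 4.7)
The strategy is to verify that the arguments of \cite[Theorem 4.2.2]{Cos3}, proved there for $n\ge 2$, carry over verbatim to the case $n=1$. This is feasible because every technical ingredient used in those arguments has already been established for all integers $n\ge 1$ earlier in the paper: the reflexivity of $H_{0}^{1,(p,q)}(\Omega)$ when $1<q<\infty$ (Theorem~\ref{HKM93 Thm131}), the product rule (Lemma~\ref{Product Rule for W1pq}), the fact that $H_{0}^{1,(p,q)}(\Omega)$ is stable under truncation from below by $0$ and from above by $1$ without increasing the $p,q$-quasinorm of the gradient, and the inclusions between Lorentz spaces from Remark~\ref{relation between Lpr and Lps}. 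I would therefore structure the proof into three blocks, each of which is essentially dimension-independent.

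First, I would dispose of the soft properties (i), (ii), (iii), (iv) and (ix) by direct manipulation of admissible functions. For (i), the inclusion of admissibility sets is obvious. For (ii), zero-extension from $\Omega_1$ to $\Omega_2$ preserves $H_{0}^{1,(p,q)}$ membership and the $L^{p,q}$-quasinorm of $\nabla u$. For (iii), any admissible $u$ for $E$ is, by definition of $\mathcal{A}(E,\Omega)$, admissible for some open neighborhood $U\supset E$. For (iv), given any open $U\supset K$, compactness forces $K_{i}\subset U$ for all large $i$, and (i)–(iii) then yield the monotone convergence of capacities. For (ix), restriction to $\Omega_{1}$ and zero extension to $\Omega_{2}$ provide the two inequalities.

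Second, I would handle the subadditivity statements (vi), (vii), (viii) by constructing a single admissible function from a sequence $u_i\in \mathcal{A}(E_i,\Omega)$. The standard choice is $u=\sup_i u_i$ (or a suitable truncation of $\sum_i u_i$); stability of $H_{0}^{1,(p,q)}(\Omega)$ under truncation, together with the identity $\nabla u = \nabla u_i$ a.e.\ on the set where the supremum is attained by $u_i$, gives $|\nabla u|\le \sum_i |\nabla u_i|$ pointwise. The three cases are then distinguished by the available Minkowski-type inequality for $\|\cdot\|_{L^{p,q}}$: for $1\le q\le p$ one exploits the lattice inequality $\|f+g\|_{p,q}^{q}\le \|f\|_{p,q}^{q}+\|g\|_{p,q}^{q}$ (which yields the $q/p$-exponent form in (vi)); for $p<q<\infty$ one uses the plain triangle inequality for the equivalent norm $\|\cdot\|_{(p,q)}$, together with comparability (\ref{relation between the Lpr and the Lps norm m ge 1}); for $q=\infty$ finiteness of the union is needed because only the triangle inequality for the quasinorm is available. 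Symmetric reasoning for the lower bounds (x) and (xi), using disjointness of supports so that $\|\nabla u\|_{L^{p,q}(\Omega;\mathbf{R}^n)}$ decomposes across the $\Omega_i$, yields the superadditivity estimates in the appropriate $q/p$-power form.

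Finally, the step I expect to be the main obstacle is (v), the monotone convergence for increasing sequences when $1<q\le p$. Here one cannot just take a sup of admissible functions because the limit $E=\bigcup E_i$ may not be open, and the required bound is a sharp equality. The plan is: monotonicity gives $\mathrm{cap}_{p,q}(E,\Omega)\ge \lim_i \mathrm{cap}_{p,q}(E_i,\Omega)$; for the reverse, assuming the limit is finite, pick near-minimizers $u_i\in \mathcal{A}(E_i,\Omega)$ with $0\le u_i\le 1$, which form a bounded sequence in $H_{0}^{1,(p,q)}(\Omega)$. By Theorem~\ref{HKM93 Thm131} (reflexivity is in force because $1<q<\infty$) a subsequence converges weakly to some $u\in H_{0}^{1,(p,q)}(\Omega)$. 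Applying Mazur's lemma one obtains convex combinations converging strongly, and hence a.e., to $u$; on each $E_j$ the combinations are eventually $\ge 1$ in a neighborhood of $E_j$, so $u\ge 1$ a.e.\ on $E$. A standard argument (truncating $u$ to be $\ge 1$ on a neighborhood of $E$ using that $H_{0}^{1,(p,q)}$ is closed under the appropriate cut-offs) then shows $u\in \mathcal{A}(E,\Omega)$, and weak lower semicontinuity of the norm completes the proof. The delicate point is verifying, in dimension $n=1$ as well as $n\ge 2$, that the a.e.\ inequality $u\ge 1$ on $E$ can be upgraded to $u\ge 1$ on a neighborhood of $E$ after a small $H_{0}^{1,(p,q)}$-perturbation; this is precisely where the proof in \cite[Theorem 4.2.2]{Cos3} will need to be inspected carefully to confirm that no step secretly relies on $n\ge 2$.
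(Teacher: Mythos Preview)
Your proposal is correct and matches the paper's approach: the paper's entire proof is the single sentence that the result was proved in \cite[Theorem 4.2.2]{Cos3} for $n\ge 2$ and that the case $n=1$ is very similar and omitted. Your outline is therefore strictly more detailed than what the paper provides, but it is precisely the verification the paper is implicitly asking the reader to perform.
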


\begin{proof} This result was proved in \cite{Cos3} for $n \ge 2.$ See \cite[Theorem 4.2.2]{Cos3}.
The proof of the case $n=1$ is very similar to the proof of \cite[Theorem 4.2.2]{Cos3} and omitted.

\end{proof}

The set function $\mathrm{cap}_{p,q}(\cdot, \Omega)$ satisfies properties (i), (iv), and (v) of Theorem \ref{Cap Thm p,q relative capacity} whenever $1<q \le p<\infty$ and $\Omega$ is a bounded open set in $\mathbf{R}^n,$ where $n \ge 1$ is an integer. Thus, $\mathrm{cap}_{(p,q)}(\cdot, \Omega)$ is a Choquet capacity (relative to $\Omega$) whenever $1<q \le p<\infty$ and $\Omega$ is a bounded open set in $\mathbf{R}^n,$ where $n \ge 1$ is an integer. We may thus invoke an important capacitability theorem of Choquet and state the following result. See Doob \cite[Appendix II]{Doo}.

\begin{Theorem}\label{Choquet p,q relative capacity Thm 1<q le p}
Let $\Omega$ be a bounded open set in $\mathbf{R}^n,$ where
$n \ge 1$ is an integer. Suppose $1<q \le p< \infty.$ The set function $E \mapsto {\mathrm{cap}}_{p,q}(E, \Omega),$
$E \subset \Omega,$ is a Choquet capacity. In particular, all
Borel subsets (in fact, all analytic) subsets $E$ of
$\Omega$ are capacitable, i.e.
$${\mathrm{cap}}_{p,q}(E, \Omega)=\sup \, \{ {\mathrm{cap}}_{p,q}(K,
\Omega): K \subset E \mbox{ compact} \}.$$
\end{Theorem}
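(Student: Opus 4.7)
The plan is to reduce the statement to a direct application of Choquet's capacitability theorem, exactly as cited from Doob \cite[Appendix II]{Doo}. Recall that a set function $T$ on subsets of $\Omega$ is a Choquet capacity (relative to $\Omega$) if it is (a) monotone under inclusion, (b) left continuous along arbitrary increasing sequences $E_i \uparrow E \subset \Omega$, and (c) right continuous along decreasing sequences of compact subsets $K_i \downarrow K \subset \Omega$. Once these three axioms are verified, the conclusion is automatic.

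First I would check these three axioms for $T(\cdot) := \mathrm{cap}_{p,q}(\cdot, \Omega)$ under the hypothesis $1 < q \le p < \infty$. Monotonicity is exactly property (i) of Theorem \ref{Cap Thm p,q relative capacity}; right continuity on decreasing compact sequences is property (iv); and left continuity on arbitrary increasing sequences is property (v). The hypothesis $1 < q \le p$ is precisely the range in which property (v) is available in that theorem, so there is nothing more to check. (Properties (i) and (iv) in fact hold throughout the wider range $1 \le q \le \infty$, so the binding assumption is (v).)

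Having the three axioms in hand, I would then quote Choquet's capacitability theorem to conclude that every analytic subset $E \subset \Omega$ — and in particular every Borel subset — satisfies
\[
\mathrm{cap}_{p,q}(E, \Omega) = \sup\{\mathrm{cap}_{p,q}(K, \Omega) : K \subset E, \ K \text{ compact}\}.
\]
There is essentially no obstacle at this final step: the substantive work has already been carried out in establishing property (v) of Theorem \ref{Cap Thm p,q relative capacity}, where the reflexivity of $H_{0}^{1,(p,q)}(\Omega)$ for $1 < q < \infty$ (Theorem \ref{HKM93 Thm131}) together with a Mazur-type argument supplies a weak limit of an almost-minimizing sequence of admissible functions, whose gradient controls $\mathrm{cap}_{p,q}(E, \Omega)$ from above by $\lim_{i} \mathrm{cap}_{p,q}(E_i, \Omega)$. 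Thus the present theorem is a short corollary rather than a substantive new result, and the proof reduces to these two citations.
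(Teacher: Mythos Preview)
Your proposal is correct and matches the paper's approach exactly: the paper also observes that properties (i), (iv), and (v) of Theorem~\ref{Cap Thm p,q relative capacity} hold in the range $1<q\le p<\infty$, and then invokes Choquet's capacitability theorem from Doob \cite[Appendix II]{Doo}. The theorem is stated in the paper without a separate proof, as it is an immediate corollary of these citations.
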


The set function $\mathrm{cap}_{p,q}(\cdot, \Omega)$ satisfies properties (i) and (iv) of
Theorem \ref{Cap Thm p,q relative capacity} whenever $q=1$ or $p<q \le \infty.$ Like in
Theorem \ref{Cap Thm p,q relative capacity}, the set $\Omega$ is bounded and open in $\mathbf{R}^n,$ where
$n \ge 1$ is an integer.

\begin{Question} \label{Question Choquet p,q relative capacity}
Let $1<p< \infty$ be fixed. Suppose that $\Omega$ is a bounded open set in $\mathbf{R}^n,$ where
$n \ge 1$ is an integer. Is $\mathrm{cap}_{p,q}(\cdot, \Omega)$ a Choquet capacity when
$q=1$ or when $p<q \le \infty$ ?
\end{Question}

We obtain a partial positive result later. Namely, we show later that if $\Omega \subset {\mathbf{R}}^n$
is a bounded open set, then $\mathrm{cap}_{p,1}(\cdot, \Omega)$ is a Choquet capacity
whenever $1 \le n<p<\infty$ or $1<n=p<\infty.$

\begin{Remark}\label{cap(K, Omega)=cap(bdry K, Omega) when 1 le q le infty p,q}
The definition of the $p,q$-capacity easily implies
\begin{equation*}
{\mathrm{cap}}_{p,q}(K, \Omega)={\mathrm{cap}}_{p,q}(\partial K, \Omega)
\end{equation*}
whenever $K$ is a compact set in $\Omega.$
\end{Remark}

\subsection{The Sobolev-Lorentz $(p,q)$ Global Capacity}
Let $n \ge 1$ be an integer. Suppose $1<p<\infty$ and $1\le q \le \infty.$
For a set $E \subset \mathbf{R}^n$ we define the Sobolev-Lorentz global $(p,q)$-capacity of $E$ by
$$\mathrm{Cap}_{(p,q)}(E)= \inf ||u||_{H^{1, (p,q)}(\mathbf{R}^n)}^p,$$ where $u$ runs through the set
$$S(E)=\{ u \in H_{0}^{1, (p,q)}(\mathbf{R}^n): u \ge 1 \mbox { in an open set containing } E\}.$$
If $S(E)=\emptyset,$ we set $\mathrm{Cap}_{(p,q)}(E)=\infty.$ It is obvious that the same number is obtained if the infimum in the definition is taken over $u \in S(E)$ with $0 \le u \le 1.$

\subsubsection{Basic Properties of the $(p,q)$ Global Capacity}

The following theorem summarizes the properties of the global Sobolev-Lorentz $(p,q)$-capacity, extending our results from \cite[Theorem 4.6.2]{Cos3} to the case $n=1.$

\begin{Theorem} \label{Cap Thm (p,q) global capacity}
{\rm(See \cite[Theorem 4.6.2]{Cos3}).}
Let $n \ge 1$ be an integer. Suppose $1<p<\infty$ and $1 \le q \le \infty.$
The set function $E \mapsto \mathrm{Cap}_{(p,q)}(E),$ $E \subset \mathbf{R}^n,$ has the following properties:

\par {\rm{(i)}}  If $E_{1} \subset E_{2},$ then ${\mathrm{Cap}}_{(p,q)}(E_{1}) \le {\mathrm{Cap}}_{(p,q)}(E_{2}).$

\par {\rm{(ii)}} ${\mathrm{Cap}}_{(p,q)}(E)=\inf \, \{ {\mathrm{Cap}}_{(p,q)}(U):
E \subset U \subset \mathbf{R}^n, \,U \mbox{ open} \}.$

\par {\rm{(iii)}} If $K_{i}$ is a decreasing sequence of compact subsets of
$\mathbf{R}^n$ with $K=\bigcap_{i=1}^{\infty} K_{i},$ then
$${\mathrm{Cap}}_{(p,q)}(K)=\lim_{i \rightarrow \infty}
{\mathrm{Cap}}_{(p,q)}(K_{i}).$$

\par {\rm{(iv)}} Suppose that $1<q<\infty.$ If $E_{1} \subset E_{2} \subset \ldots \subset E=\bigcup_{i=1}^{\infty} E_{i} \subset \mathbf{R}^n,$ then
$${\mathrm{Cap}}_{(p,q)}(E)=\lim_{i \rightarrow \infty}
{\mathrm{Cap}}_{(p,q)}(E_{i}).$$

\par {\rm{(v)}} If $E=\bigcup_{i=1}^{\infty} E_{i},$ then
$${\mathrm{Cap}}_{(p,q)}(E)^{1/p} \le
\sum_{i=1}^{\infty} {\mathrm{Cap}}_{(p,q)}(E_{i})^{1/p}.$$

\end{Theorem}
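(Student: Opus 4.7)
The plan is to follow the proof of the $n \ge 2$ case \cite[Theorem 4.6.2]{Cos3} verbatim, checking that every ingredient used there is already available for $n = 1$: the basic Lorentz theory of Section \ref{section Lorentz spaces}, the truncation invariance and product rule of Lemma \ref{Product Rule for W1pq}, and the reflexivity package in Theorems \ref{HKM93 Thm130} and \ref{HKM93 Thm131}, all stated for arbitrary integer $n \ge 1$. On this basis, (i) is immediate from $S(E_2) \subset S(E_1)$. For (ii), the $\le$ direction is (i), while $\ge$ follows because any $u \in S(E)$ is, by definition, in $S(U)$ for some open $U \supset E$, so $\mathrm{Cap}_{(p,q)}(U) \le ||u||_{H^{1,(p,q)}(\mathbf{R}^n)}^p$; taking the infimum over $u$ concludes. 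Part (iii) combines (i) and (ii) in the usual way: given $\epsilon > 0$, (ii) produces an open $U \supset K$ with $\mathrm{Cap}_{(p,q)}(U) < \mathrm{Cap}_{(p,q)}(K) + \epsilon$, and compactness together with monotonicity of $K_i$ forces $K_i \subset U$ for all large $i$.

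For the countable $1/p$-subadditivity (v), fix $\epsilon > 0$ and select $u_i \in S(E_i)$ with $0 \le u_i \le 1$ and $||u_i||_{H^{1,(p,q)}(\mathbf{R}^n)} \le \mathrm{Cap}_{(p,q)}(E_i)^{1/p} + \epsilon 2^{-i}$. Set $u = \sup_i u_i$. Then $u \ge 1$ in an open neighborhood of $E$, while the a.e.\ pointwise bound $|\nabla u| \le \sum_i |\nabla u_i|$ (the sup/max rule for weak gradients) combined with Minkowski's inequality in the honest norm $||\cdot||_{L^{(p,q)}}$ yields $||u||_{L^{(p,q)}(\mathbf{R}^n)} \le \sum_i ||u_i||_{L^{(p,q)}(\mathbf{R}^n)}$ and an analogous bound for $||\nabla u||_{L^{(p,q)}(\mathbf{R}^n; \mathbf{R}^n)}$. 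Hence $u \in H^{1,(p,q)}(\mathbf{R}^n) \cap S(E)$ whenever the right-hand side of (v) is finite, and letting $\epsilon \to 0$ proves the stated estimate.

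Property (iv), valid only for $1 < q < \infty$, is the main obstacle and is precisely where reflexivity enters. From (i), $L := \lim_i \mathrm{Cap}_{(p,q)}(E_i) \le \mathrm{Cap}_{(p,q)}(E)$; I may assume $L < \infty$. Pick $u_i \in S(E_i)$ with $0 \le u_i \le 1$ and $||u_i||_{H^{1,(p,q)}(\mathbf{R}^n)}^p \le \mathrm{Cap}_{(p,q)}(E_i) + 1/i$. By Theorem \ref{HKM93 Thm131}, a subsequence converges weakly in $L^{p,q}(\mathbf{R}^n)$ and $L^{p,q}(\mathbf{R}^n; \mathbf{R}^n)$ to some $u \in H^{1,(p,q)}(\mathbf{R}^n)$; Mazur's lemma and the weak lower semicontinuity of the $(p,q)$-norm give $||u||_{H^{1,(p,q)}(\mathbf{R}^n)}^p \le L$. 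The hard part is admissibility: weak convergence does not preserve the condition ``$u \ge 1$ on an open neighborhood of $E$.'' The remedy, as in \cite[Theorem 4.6.2]{Cos3}, is first to use Mazur to produce, for each $j$, convex combinations of $\{u_i\}_{i \ge j}$ converging strongly in $H^{1,(p,q)}$ and a.e.\ along a subsequence to $u$; since each such combination is $\ge 1$ on a neighborhood of $E_j$ by the monotonicity of $E_i$, the limit satisfies $u \ge 1$ a.e.\ on $E_j$, and hence on $E = \bigcup_j E_j$. One then invokes the standard equivalence between the open-neighborhood definition of $\mathrm{Cap}_{(p,q)}$ and its a.e.\ (or quasi-everywhere) counterpart to conclude $\mathrm{Cap}_{(p,q)}(E) \le ||u||_{H^{1,(p,q)}(\mathbf{R}^n)}^p \le L$; this equivalence transfers verbatim from $n \ge 2$ to $n = 1$ by the approximation and truncation results of Section \ref{section Sobolev-Lorentz spaces}.
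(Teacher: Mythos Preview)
Your overall plan—follow \cite[Theorem 4.6.2]{Cos3} verbatim and check that no ingredient requires $n \ge 2$—is exactly what the paper does: its entire proof is a citation to \cite{Cos3} together with the remark that the $n=1$ argument is identical. Your sketches of (i)–(iii) are correct, and your outline of (v) is the standard one, though when $q=\infty$ you should say a word about why $\sup_i u_i$ lands in $H^{1,(p,\infty)}(\mathbf{R}^n)$ rather than merely in $W^{1,(p,\infty)}(\mathbf{R}^n)$, since those spaces differ (see \cite[Theorem 4.8]{Cos4}).

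There is, however, a genuine gap in your treatment of (iv). From a.e.\ convergence of the Mazur combinations you correctly deduce $u \ge 1$ \emph{almost everywhere} on each $E_j$, hence on $E$; you then invoke an equivalence between the open-neighborhood definition of $\mathrm{Cap}_{(p,q)}$ and its ``a.e.\ (or quasi-everywhere)'' counterpart. But the a.e.\ version of this equivalence is false: a singleton $\{x_0\}$ has Lebesgue measure zero, so every measurable function trivially satisfies $u \ge 1$ a.e.\ on $\{x_0\}$, yet singletons can have strictly positive $(p,q)$-capacity—indeed the paper computes $\mathrm{Cap}_{n,1}(\{x_0\}) = n^n \Omega_n > 0$ in Theorem~\ref{sharp estimate for the n1 global capacity of a point}. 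The correct route is through \emph{quasi-everywhere}: one passes to quasi-continuous representatives and uses that strong $H^{1,(p,q)}$-convergence implies q.e.\ convergence along a subsequence, obtaining $\tilde u \ge 1$ q.e.\ on $E$; only then does the q.e./open-neighborhood equivalence apply. Your parenthetical hints at the right notion, but the argument as written establishes only the a.e.\ inequality, which is too weak to force admissibility of $u/(1-\varepsilon)$.
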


\begin{proof} This result was proved in \cite{Cos3} for $n \ge 2.$ See \cite[Theorem 4.6.2]{Cos3}.
The proof of the case $n=1$ is very similar to the proof of \cite[Theorem 4.6.2]{Cos3} and omitted.

\end{proof}

The set function $\mathrm{Cap}_{(p,q)}(\cdot)$ satisfies properties (i), (iii), and (iv) of Theorem \ref{Cap Thm (p,q) global capacity} whenever $n \ge 1$ is an integer and $1<p,q<\infty.$ Thus, this set function is a Choquet capacity whenever $n \ge 1$ is an integer and $1<p,q<\infty.$ We may thus invoke an important capacitability theorem of Choquet and state the following result. See Doob \cite[Appendix II]{Doo}.

\begin{Theorem}\label{Choquet Global (p,q) Cap Thm 1<q<infty} Let $n \ge 1$ be an integer. Suppose $1<p,q<\infty.$ The set function $E \mapsto {\mathrm{Cap}}_{(p,q)}(E),$ $E \subset \mathbf{R}^n,$ is a Choquet capacity. In particular, all Borel subsets (in fact, all analytic) subsets $E$ of $\mathbf{R}^n$ are capacitable, i.e.
$${\mathrm{Cap}}_{(p,q)}(E)=\sup \, \{ {\mathrm{Cap}}_{(p,q)}(K): K \subset E \mbox{ compact} \}.$$
\end{Theorem}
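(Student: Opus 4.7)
The plan is to reduce the statement to a direct invocation of Choquet's capacitability theorem, using the axioms already established for $\mathrm{Cap}_{(p,q)}(\cdot)$ in Theorem \ref{Cap Thm (p,q) global capacity}. First I would appeal to that theorem to record three facts: monotonicity (part (i)), continuity from above along decreasing sequences of compact sets (part (iii)), and continuity from below along arbitrary increasing sequences in $\mathbf{R}^n$ (part (iv)). Together, these are precisely the defining axioms of a Choquet capacity in the formulation adopted in Doob \cite{Doo}, Appendix II.

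Second, having verified the axioms, I would cite Choquet's capacitability theorem in the form given there to conclude that every analytic subset $E$ of $\mathbf{R}^n$ is capacitable, that is,
$$\mathrm{Cap}_{(p,q)}(E) = \sup\{\mathrm{Cap}_{(p,q)}(K) : K \subset E,\ K \text{ compact}\}.$$
Since Borel sets form a subclass of the analytic sets, this in particular gives the capacitability of every Borel subset of $\mathbf{R}^n$.

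The substantive input is hypothesis (iv) of Theorem \ref{Cap Thm (p,q) global capacity}, whose proof is the only place where the restriction $1<q<\infty$ is genuinely used; it rests on the reflexivity of $H^{1,(p,q)}(\mathbf{R}^n)$ (Theorems \ref{HKM93 Thm130} and \ref{HKM93 Thm131}) via a standard Mazur-lemma argument. Because reflexivity breaks down at the endpoints $q=1$ and $q=\infty,$ this is precisely the reason that Question \ref{Question Choquet (p,q) relative capacity} (and its global counterpart) remains open in those cases and must be attacked by the separate, more delicate arguments developed later in the paper.

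Since all of the hard work is packaged inside Theorem \ref{Cap Thm (p,q) global capacity} and Choquet's theorem is quoted as a black box, no real obstacle arises at this stage; the only mild care required is to match the formulation of the capacity axioms used here with the formulation of Choquet's theorem being cited, which is straightforward in Doob's version.
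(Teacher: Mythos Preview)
Your proposal is correct and matches the paper's approach exactly: the paper likewise observes that properties (i), (iii), and (iv) of Theorem \ref{Cap Thm (p,q) global capacity} hold when $1<p,q<\infty$, concludes that $\mathrm{Cap}_{(p,q)}(\cdot)$ is a Choquet capacity, and then invokes Choquet's capacitability theorem from Doob \cite[Appendix II]{Doo}. Your additional remarks on the role of reflexivity in securing property (iv) are accurate and consistent with the paper's surrounding discussion.
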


The set function $\mathrm{Cap}_{(p,q)}(\cdot)$ satisfies properties (i) and (iii) of Theorem \ref{Cap Thm (p,q) global capacity} whenever $q=1$ or $q=\infty.$ Like in Theorem \ref{Cap Thm (p,q) global capacity}, $n \ge 1$ is an integer.

\begin{Question} \label{Question Choquet (p,q) global capacity}
Let $n \ge 1$ be an integer. Suppose $1<p<\infty.$ Is $\mathrm{Cap}_{(p,q)}(\cdot)$ Choquet when
$q=1$ or when $q=\infty$ ?
\end{Question}

We obtain a partial positive result later. Namely, we show later that $\mathrm{Cap}_{(p,1)}(\cdot)$ is a Choquet capacity
whenever $1 \le n<p<\infty$ or $1<n=p<\infty.$

\subsection{The Sobolev-Lorentz $p,q$ Global Capacity}

Let $n \ge 1$ be an integer. Suppose $1<p<\infty$ and $1\le q \le \infty.$ We can introduce the global ${p,q}$-capacity the way we
introduced the global $(p,q)$-capacity. For a set $E \subset \mathbf{R}^n$ we define the Sobolev-Lorentz global $p,q$-capacity of $E$ by
$$\mathrm{Cap}_{p,q}(E)= \inf ||u||_{H^{1, p,q}(\mathbf{R}^n)}^p,$$ where $u$ runs through the set
$$S(E)=\{ u \in H_{0}^{1, (p,q)}(\mathbf{R}^n): u \ge 1 \mbox { in an open set containing } E\}.$$
If $S(E)=\emptyset,$ we set $\mathrm{Cap}_{p,q}(E)=\infty.$ It is obvious that the same number is obtained if the infimum in the definition is taken over $u \in S(E)$ with $0 \le u \le 1.$

\subsubsection{Basic Properties of the $p,q$ Global Capacity}

The following theorem summarizes the properties of the global Sobolev-Lorentz $p,q$-capacity, extending our results from \cite[Theorem 4.7.3]{Cos3} to the case $n=1.$

\begin{Theorem}\label{Cap Thm p,q global capacity}
{\rm(See \cite[Theorem 4.7.3]{Cos3}).}
Let $n \ge 1$ be an integer. Suppose $1<p<\infty$ and $1 \le q \le \infty.$ The set function $E \mapsto {\mathrm{Cap}}_{p,q}(E),$ $E \subset \mathbf{R}^n,$ enjoys the following properties:

\par {\rm{(i)}}  If $E_{1} \subset E_{2},$ then ${\mathrm{Cap}}_{p,q}(E_{1}) \le {\mathrm{Cap}}_{p,q}(E_{2}).$

\par {\rm{(ii)}} ${\mathrm{Cap}}_{p,q}(E)=\inf \, \{ {\mathrm{Cap}}_{p,q}(U):
E \subset U \subset \mathbf{R}^n, \,U \mbox{ open} \}.$

\par {\rm{(iii)}} If $K_{i}$ is a decreasing sequence of compact subsets of
$\mathbf{R}^n$ with $K=\bigcap_{i=1}^{\infty} K_{i},$ then
$${\mathrm{Cap}}_{p,q}(K)=\lim_{i \rightarrow \infty}
{\mathrm{Cap}}_{p,q}(K_{i}).$$

\par {\rm{(iv)}} Suppose that $1<q \le p.$ If $E_{1} \subset E_{2} \subset \ldots \subset E=\bigcup_{i=1}^{\infty} E_{i} \subset
\mathbf{R}^n,$ then
$${\mathrm{Cap}}_{p,q}(E)=\lim_{i \rightarrow \infty}
{\mathrm{Cap}}_{p,q}(E_{i}).$$

\par {\rm{(v)}} Suppose that $1 \le q \le p.$ If $E=\bigcup_{i=1}^{\infty} E_{i} \subset \mathbf{R}^n,$ then
$${\mathrm{Cap}}_{p,q}(E)^{q/p} \le \sum_{i=1}^{\infty} {\mathrm{Cap}}_{p,q}(E_{i})^{q/p}.$$

\par {\rm{(vi)}} Suppose that $p < q < \infty.$ If $E=\bigcup_{i=1}^{\infty} E_{i} \subset \mathbf{R}^n,$ then $${\mathrm{Cap}}_{p,q}(E) \le \sum_{i=1}^{\infty} {\mathrm{Cap}}_{p,q}(E_{i}).$$

\par {\rm{(vii)}} Suppose that $q=\infty.$ Let $k \ge 1$ be an integer. If $E=\bigcup_{i=1}^{k} E_{i} \subset \mathbf{R}^n,$ then $${\mathrm{Cap}}_{p,q}(E) \le \sum_{i=1}^{k} {\mathrm{Cap}}_{p,q}(E_{i}).$$

\end{Theorem}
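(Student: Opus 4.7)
The plan is to mirror the argument of Theorem 4.7.3 in \cite{Cos3}, which was carried out for $n \ge 2$, and to observe that none of the steps there uses any restriction on the dimension: each invokes only the lattice and truncation properties of $H_{0}^{1,(p,q)}(\mathbf{R}^n)$, the Sobolev-Lorentz tools of Section \ref{section Sobolev-Lorentz spaces}, and the reflexivity results of Theorems \ref{HKM93 Thm130} and \ref{HKM93 Thm131}, all of which were established uniformly in $n \ge 1$. Items (i) and (ii) are immediate: the inclusion $S(E_2) \subset S(E_1)$ gives monotonicity, and any $u \in S(E)$ automatically lies in $S(U)$ for some open $U \supset E$, which yields the outer-regularity identity. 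For (iii) I would pick $u \in S(K)$ with $\|u\|_{H^{1,p,q}(\mathbf{R}^n)}^{p} < \mathrm{Cap}_{p,q}(K) + \varepsilon$ and an open set $U \supset K = \bigcap_i K_i$ on which $u \ge 1$; since the $K_i$ are compact and decreasing, $K_i \subset U$ for $i$ large, so $u \in S(K_i)$ and $\limsup_i \mathrm{Cap}_{p,q}(K_i) \le \mathrm{Cap}_{p,q}(K) + \varepsilon$, the reverse inequality following from (i).

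The main obstacle is (iv), which demands $1 < q \le p$ and is where the reflexivity of $H^{1,(p,q)}(\mathbf{R}^n)$ enters. I would select $u_i \in S(E_i)$ with $0 \le u_i \le 1$ and energy within $1/i$ of $\mathrm{Cap}_{p,q}(E_i)$, assume the limit on the right of (iv) is finite, and use Theorem \ref{HKM93 Thm131} to extract a subsequence converging weakly in $H^{1,(p,q)}(\mathbf{R}^n)$ to some $u \in H_{0}^{1,(p,q)}(\mathbf{R}^n)$. The delicate part is to upgrade the weak limit to an admissible function: Mazur's lemma supplies convex combinations that converge strongly in $L^{(p,q)}(\mathbf{R}^n)$ and, after a further subsequence, a.e., forcing $u \ge 1$ a.e.\ on $E$; outer regularity (ii) together with the standard $u/(1-\delta)$ perturbation then recovers admissibility on an open superset of $E$ with an arbitrarily small excess, and the lower semicontinuity of the $H^{1,p,q}$-quasinorm under weak convergence followed by $\delta \to 0$ completes the argument.

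For (v), (vi) and (vii) I would choose admissible $u_i$ with $0 \le u_i \le 1$ and $\|u_i\|^{p}$ close to $\mathrm{Cap}_{p,q}(E_i)$ and form the pointwise maximum $v_k = \max(u_1, \ldots, u_k) \in S(E_1 \cup \ldots \cup E_k)$, which belongs to $H_{0}^{1,(p,q)}(\mathbf{R}^n)$ by the lattice property. In (v), the range $1 \le q \le p$ is precisely where $\|w\|_{H^{1,p,q}(\mathbf{R}^n)}^{q} = \|w\|_{p,q}^{q} + \|\nabla w\|_{p,q}^{q}$ is subadditive on pointwise maxima (via the layer-cake decomposition of the $L^{p,q}$-norm and the disjoint-support structure of $\nabla v_k$ on the level sets $\{v_k = u_i\}$), which yields the finite-sum estimate with exponent $q/p$; the countable version follows by applying (iv) when $1 < q \le p$ and by monotone convergence along maxima when $q = 1$. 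In (vi) and (vii), for $p < q \le \infty$, the same disjoint-support structure combined with the quasi-triangle inequality for the $p,q$-quasinorm produces the finite-sum estimate $\mathrm{Cap}_{p,q}(\bigcup_{i=1}^{k} E_i) \le \sum_{i=1}^{k} \mathrm{Cap}_{p,q}(E_i)$; when $q < \infty$ we pass to the countable version as before, and when $q = \infty$ the finite-$k$ statement is precisely (vii).
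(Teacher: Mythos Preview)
Your proposal matches the paper's proof at the level of strategy: both say that Theorem 4.7.3 of \cite{Cos3} was written for $n\ge 2$ and that nothing in that argument uses the restriction on the dimension, so the case $n=1$ follows verbatim. The paper itself supplies no further detail, so your sketch already goes beyond what is provided there.

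There is, however, a genuine gap in your sketch of (iv). From Mazur's lemma you correctly extract $u\ge 1$ a.e.\ on $E$, but the step ``outer regularity (ii) together with the $u/(1-\delta)$ perturbation recovers admissibility on an open superset of $E$'' does not follow. When $E$ has Lebesgue measure zero (for instance a single point, which has positive $(p,q)$-capacity whenever $p>n$), the condition ``$u\ge 1$ a.e.\ on $E$'' is vacuous and cannot force $u/(1-\delta)\ge 1$ on any open neighbourhood of $E$; outer regularity does not rescue this, since it gives information about open supersets of $E$, not about the pointwise behaviour of $u$. The $u/(1-\delta)$ device works only once one knows $u\ge 1$ \emph{everywhere} on $E$ for a \emph{continuous} representative --- this is exactly how the paper argues in Theorem \ref{MCT for the (p,1) and p,1 relative capacities}, where continuity is available because $p\ge n$ --- but for general $1<q\le p$ with $p<n$ no continuous representative exists. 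The standard completion of the argument requires additional input: typically one shows that the Mazur sequence converges $(p,q)$-quasi-everywhere along a further subsequence, so that $u\ge 1$ q.e.\ on $E$, and then invokes the (nontrivial) lemma that enlarging $S(E)$ to $\{u:\tilde u\ge 1\text{ q.e.\ on }E\}$ leaves the infimum unchanged. A related imprecision appears in your passage from finite to countable unions: in (v) for $q=1$ and in (vi) for $p<q<\infty$ property (iv) is not available, so the ``as before'' cannot mean an appeal to (iv); one instead argues directly (for $q=1$, by summing the $u_i$, truncating at $1$, and using the triangle inequality for the $p,1$-norm).
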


\begin{proof} This result was proved in \cite{Cos3} for $n \ge 2.$ See \cite[Theorem 4.7.3]{Cos3}.
The proof of the case $n=1$ is very similar to the proof of \cite[Theorem 4.7.3]{Cos3} and omitted.

\end{proof}

The set function $\mathrm{Cap}_{p,q}(\cdot)$ satisfies properties (i), (iii), and (iv) of Theorem \ref{Cap Thm p,q global capacity} whenever $n \ge 1$ is an integer and $1<q \le p<\infty.$ Thus, this set function is a Choquet capacity whenever $n \ge 1$ is an integer and $1<q \le p<\infty.$ We may thus invoke an important capacitability theorem of Choquet and state the following result. See Doob \cite[Appendix II]{Doo}.

\begin{Theorem}\label{Choquet Global p,q Cap Thm 1<q le p} Let $n \ge 1$ be an integer. Suppose $1<q \le p<\infty.$
The set function $E \mapsto {\mathrm{Cap}}_{p,q}(E),$ $E \subset \mathbf{R}^n,$ is a Choquet capacity. In particular, all Borel subsets (in fact, all analytic) subsets $E$ of $\mathbf{R}^n$ are capacitable, i.e.
$${\mathrm{Cap}}_{p,q}(E)=\sup \, \{ {\mathrm{Cap}}_{p,q}(K): K \subset E \mbox{ compact} \}.$$
\end{Theorem}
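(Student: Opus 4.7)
The plan is to verify that $\mathrm{Cap}_{p,q}(\cdot)$ satisfies the three defining axioms of a Choquet capacity on ${\mathbf{R}}^n$ and then to invoke the classical capacitability theorem of Choquet (as stated, for instance, in Doob \cite[Appendix II]{Doo}) to conclude that all analytic subsets, and in particular all Borel subsets, of ${\mathbf{R}}^n$ are capacitable.

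Concretely, I would recall that the three axioms required of a Choquet capacity are: \textbf{(a)} monotonicity with respect to set inclusion; \textbf{(b)} continuity from above along decreasing sequences of compact sets, i.e.\ if $K_i$ is a decreasing sequence of compact sets with intersection $K$, then the capacity of $K_i$ tends to the capacity of $K$; and \textbf{(c)} continuity from below along arbitrary increasing sequences of sets, i.e.\ if $E_i$ is an increasing sequence of subsets of ${\mathbf{R}}^n$ with union $E$, then the capacity of $E_i$ tends to the capacity of $E$.

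The heart of the argument is then the observation that under the standing hypothesis $1<q\le p<\infty$ all three of these properties have in fact already been established in Theorem \ref{Cap Thm p,q global capacity}: axiom (a) is exactly item (i) of that theorem, axiom (b) is exactly item (iii), and axiom (c) is exactly item (iv), whose hypothesis is precisely $1<q\le p$. Once these three items are cited, the first assertion of the theorem is proved. The second assertion, the capacitability of analytic (hence Borel) sets in the form $\mathrm{Cap}_{p,q}(E)=\sup\{\mathrm{Cap}_{p,q}(K): K\subset E \text{ compact}\}$, is then an immediate application of Choquet's capacitability theorem to the Choquet capacity we have just verified.

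There is essentially no obstacle in this proof, since all the analytic content was absorbed into Theorem \ref{Cap Thm p,q global capacity}. The only conceptually important point to flag is the role of the range $1<q\le p$: it is exactly this range in which the countable continuity from below (axiom (c)) was proved in item (iv) of Theorem \ref{Cap Thm p,q global capacity}. Outside this range, namely for $q=1$ or for $p<q\le\infty$, axiom (c) is not available from the basic theory, which is precisely why the analogous Choquet statement for $\mathrm{Cap}_{p,1}(\cdot)$ must be addressed by the more delicate arguments promised later in the paper (under the additional assumption $1\le n<p<\infty$ or $1<n=p<\infty$), and why Question \ref{Question Choquet p,q relative capacity} remains open in the remaining cases.
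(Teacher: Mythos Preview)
Your proposal is correct and matches the paper's own argument essentially verbatim: the paper also simply notes that items (i), (iii), and (iv) of Theorem \ref{Cap Thm p,q global capacity} give the three Choquet axioms when $1<q\le p<\infty$ and then invokes Choquet's capacitability theorem from Doob \cite[Appendix II]{Doo}. The only tiny slip is that your closing remark should cite Question \ref{Question Choquet p,q global capacity} rather than the relative-capacity question.
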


The set function $\mathrm{Cap}_{p,q}(\cdot)$ satisfies properties (i) and (iii) of Theorem \ref{Cap Thm p,q global capacity} whenever $q=1$ or $p<q\le \infty.$ Like in Theorem \ref{Cap Thm p,q global capacity}, $n \ge 1$ is an integer.

\begin{Question} \label{Question Choquet p,q global capacity}
Let $n \ge 1$ be an integer. Suppose $1<p<\infty.$
Is $\mathrm{Cap}_{p,q}(\cdot)$ a Choquet capacity when
$q=1$ or when $p<q\le \infty$ ?
\end{Question}

We obtain a partial positive result later. Namely, we show later that
$\mathrm{Cap}_{p,1}(\cdot)$ is a Choquet capacity
when $1 \le n<p<\infty$ and when $1<n=p<\infty.$

\section{Sharp estimates for the Sobolev-Lorentz $n,1$ relative capacity}
\label{section Sharp estimates of the Sobolev-Lorentz n1 relative capacity}

In \cite{Cos1} we studied the $n,q$ relative capacity for $n \ge 2$ and $1 \le q \le \infty$ and we obtained sharp estimates for the $n,q$ relative capacity of the condensers $(\overline{B}(0,r), B(0,1))$ for small values of $r$ in $[0,1).$ See \cite[Theorem 3.11]{Cos1}. In this section we obtain a few new results. For instance, we obtain sharp estimates for the $n,1$ relative capacity of the aforementioned concentric condensers for ALL $r$ in $[0,1).$ See Theorem \ref{sharp estimates for the n1 relative capacity of a point} (i). Thus, we improve the estimates that we obtained in \cite{Cos1} for the $n,1$ relative capacity. In particular, we obtain the exact value for the $n,1$ capacity of a point relative to all its bounded open neighborhoods from ${\mathbf{R}}^n,$ a strictly positive number as we saw in \cite[Corollary 3.8]{Cos1}. See Theorem \ref{sharp estimates for the n1 relative capacity of a point} (ii).

Moreover, we obtain a new result concerning the $n,1$ global capacity. We show that this aforementioned value is also the value of the global $n,1$ capacity of any point from ${\mathbf{R}}^n.$ See Theorem \ref{sharp estimate for the n1 global capacity of a point}. This constant will also come into play later when we give a new proof of the embedding $H_{0}^{1,(n,1)}(\Omega) \hookrightarrow C(\overline{\Omega}) \cap L^{\infty}(\Omega),$ where $\Omega \subset {\mathbf{R}}^n$ is open and $n \ge 2$ is an integer. See Theorem \ref{continuous embedding of H01n1 into C cap Linfty}. This embedding is proved by using the exact value of the $n,1$ capacity of a point relative to any of its bounded open neighborhoods from ${\mathbf{R}}^n.$

In order to obtain the sharp estimates for the condensers $(\overline{B}(0,r), B(0,1))$ for all $r$ in $[0,1),$ we revisit Proposition 2.11 from \cite{Cos1} for $p=n>1$ and $q=1.$

\begin{Proposition} \label{Cos IUMJ Prop211}
Suppose $n \ge 2$ is an integer. Let $0 \le r < 1$ be fixed. Let $w: [\Omega_n r^n, \Omega_n] \rightarrow [0, \infty)$ be defined by $w(t)=\left(t/\Omega_n\right)^{1/n}.$ Suppose $f:[r,1] \rightarrow [0, \infty)$ is continuous and let $g:[\Omega_n r^n, \Omega_n] \rightarrow [0, \infty)$ be defined by $g(t)=f(w(t)).$ Then
$$||g||_{L^{n,1}([\Omega_n r^n, \Omega_n])} \ge n \Omega_n^{1/n} (1-r^n)^{-1/n'} \, ||f||_{L^{1}([r,1])}.$$

\end{Proposition}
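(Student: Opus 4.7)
The plan is to turn the claimed estimate into an instance of the H\"older inequality for Lorentz spaces. First I would use the substitution $t = \Omega_n s^n$, which has Jacobian $dt = n\Omega_n s^{n-1}\,ds = n\Omega_n^{1/n} t^{(n-1)/n}\,ds$, to rewrite
$$\int_r^1 f(s)\,ds = \frac{1}{n\Omega_n^{1/n}} \int_{\Omega_n r^n}^{\Omega_n} g(t)\, t^{1/n-1}\,dt.$$
In this form the claim becomes
$$\int_{\Omega_n r^n}^{\Omega_n} g(t)\, t^{1/n-1}\,dt \le (1-r^n)^{1/n'}\, ||g||_{L^{n,1}([\Omega_n r^n,\Omega_n])},$$
which I would interpret as the bilinear pairing of $g$ with the weight $h(t):=t^{1/n-1}\chi_{[\Omega_n r^n,\Omega_n]}(t)$.

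Next I would apply the Hardy--Littlewood rearrangement inequality to this pairing,
$$\int g(t)\,h(t)\,dt \le \int_0^\infty g^*(u)\,h^*(u)\,du.$$
Since $h$ is already nonincreasing on its support, which has measure $\Omega_n(1-r^n)$, its decreasing rearrangement is $h^*(u) = (u+\Omega_n r^n)^{1/n-1}$ for $u\in[0,\Omega_n(1-r^n)]$ and $h^*(u)=0$ otherwise. This step effectively translates the weight so that its mass starts at the origin, which is what makes the sharp constant accessible.

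The crucial estimate is then the pointwise bound
$$(u+\Omega_n r^n)^{1/n-1} \le (1-r^n)^{1/n'}\, u^{1/n-1}, \qquad u\in(0,\Omega_n(1-r^n)],$$
which I would verify by writing the ratio of the two sides as $\bigl(u/(u+\Omega_n r^n)\bigr)^{1/n'}$, noting that this ratio is strictly increasing in $u$, and reading off its supremum $(1-r^n)^{1/n'}$ at the right endpoint $u=\Omega_n(1-r^n)$. Substituting this bound into the Hardy--Littlewood estimate and recognizing
$$\int_0^\infty g^*(u)\,u^{1/n-1}\,du = ||g||_{L^{n,1}([\Omega_n r^n,\Omega_n])},$$
together with the change-of-variables identity from the first step, yields the claim.

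The main obstacle is isolating the correct constant: a direct pointwise estimate of $h(t)$ in terms of $t^{1/n-1}$ on $[\Omega_n r^n,\Omega_n]$ cannot deliver the factor $(1-r^n)^{1/n'}$, since it does not detect the shrinking of the support as $r\to 1$. The key move is to rearrange $h$ first, shifting the weight toward $u=0$, and only then to compare with $u^{1/n-1}$. Equivalently, the whole argument can be packaged as the H\"older-type inequality $\int|uv|\le ||u||_{L^{n,1}} ||v||_{L^{n',\infty}}$ for Lorentz spaces, in which the computation above amounts to showing that $||h||_{L^{n',\infty}}=(1-r^n)^{1/n'}$.
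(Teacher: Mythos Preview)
Your proof is correct and follows essentially the same route as the paper: both arguments reduce to the H\"older-type pairing between $L^{n,1}$ and $L^{n',\infty}$ and then compute $\|t^{1/n-1}\|_{L^{n',\infty}([\Omega_n r^n,\Omega_n])}=(1-r^n)^{1/n'}$ via the same translation-and-supremum calculation. The only difference is cosmetic: the paper invokes \cite[Proposition~2.11]{Cos1} as a black box for the H\"older step, while you unfold it explicitly through Hardy--Littlewood and the pointwise bound on $h^*$, which makes your version self-contained.
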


\begin{proof} By applying \cite[Proposition 2.11]{Cos1} for $p=n>1$ and $q=1,$ we obtain
\begin{eqnarray*}
||g||_{L^{n,1}([\Omega_n r^n, \Omega_n])} &\ge& n \Omega_n ||(t/\Omega_n)^{-1/n'}||_{L^{n', \infty}([\Omega_n r^n, \Omega_n])}^{-1} \, ||f||_{L^1([r,1])}  \\
&=& n \Omega_n^{1/n} ||t^{-1/n'}||_{L^{n', \infty}([\Omega_n r^n, \Omega_n])}^{-1} \, ||f||_{L^1([r,1])}.
\end{eqnarray*}

We compute $||t^{-1/n'}||_{L^{n', \infty}([\Omega_n r^n, \Omega_n])}.$ We notice that
$$||t^{-1/n'}||_{L^{n', \infty}([\Omega_n r^n, \Omega_n])}=||(t+\Omega_n r^n)^{-1/n'}||_{L^{n', \infty}([0, \Omega_n (1-r^n)])}.$$

An easy computation shows that
\begin{eqnarray*}
||(t+\Omega_n r^n)^{-1/n'}||_{L^{n',\infty}([0, \Omega_n (1-r^n)])}&=&\sup_{0 \le t \le \Omega_n (1-r^n)} t^{1/n'} (t+\Omega_n r^n)^{-1/n'}\\
&=&\sup_{0 \le t \le \Omega_n (1-r^n)} \left(\frac{t}{t+\Omega_n r^n}\right)^{1/n'}=(1-r^n)^{1/n'}.
\end{eqnarray*}

This finishes the proof.

\end{proof}

Next we obtain sharp estimates for the $n,1$ relative capacity of the condensers $(\overline{B}(0,r), B(0,1))$ for ALL $r$ in $[0,1).$
Thus, we improve the estimates obtained in \cite[Theorem 3.11]{Cos1} for the $n,1$ relative capacity of the aforementioned concentric condensers.
As a consequence we obtain the exact value of the $n,1$ capacity of a point relative to all its bounded open neighborhoods from ${\mathbf{R}}^n.$

\begin{Theorem} \label{sharp estimates for the n1 relative capacity of a point}
Let $n \ge 2$ be an integer.

\par {\rm{(i)}} We have
$$n^n \Omega_n (1-r^n)^{1-n} \le {\mathrm{cap}}_{n,1}(\overline{B}(0,r), B(0,1)) \le n^n \Omega_n \frac{1-r^n}{(1-r)^n}$$
for every $0 \le r<1.$

\par {\rm{(ii)}} We have
$${\mathrm{cap}}_{n,1}(\{x\}, \Omega) = n^n \Omega_n$$
whenever $x \in {\mathbf{R}}^n$ and $\Omega$ is a bounded open set in ${\mathbf{R}}^n$ containing $x.$

\end{Theorem}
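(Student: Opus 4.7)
The plan is to prove (i) by matching an explicit radial test function against a Schwarz-symmetrization plus Proposition~\ref{Cos IUMJ Prop211} lower bound, and then deduce (ii) from (i) using the conformal scale invariance of $\|\nabla\cdot\|_{L^{n,1}}$ in dimension $n$.

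For the upper bound in (i) I would test with the radial Lipschitz tent $u(x)=\min(1,((1-|x|)/(1-r))_{+})$, which lies in $H_{0}^{1,(n,1)}(B(0,1))$ and equals $1$ on $\overline{B}(0,r)$ (a short approximation by thickening the flat region, e.g.\ using $r_{k}\searrow r$, supplies admissibility in the strict ``open neighborhood'' sense). Since $|\nabla u|=(1-r)^{-1}\chi_{A}$ on the annulus $A=B(0,1)\setminus\overline{B}(0,r)$, its nonincreasing rearrangement is the step function $(1-r)^{-1}\chi_{[0,|A|]}$, and a direct evaluation yields
$$
\|\nabla u\|_{L^{n,1}}^{n}=\frac{n^{n}}{(1-r)^{n}}|A|=n^{n}\Omega_{n}\frac{1-r^{n}}{(1-r)^{n}}.
$$

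For the lower bound in (i), let $u\in H_{0}^{1,(n,1)}(B(0,1))$ be admissible and truncate so that $0\le u\le 1$. After Schwarz symmetrization, which preserves admissibility (since $|\{u\ge 1\}|\ge\Omega_{n}r^{n}$ forces $u^{\sharp}\ge 1$ on $\overline{B}(0,r)$) and, by the P\'olya--Szeg\H{o} inequality for the Lorentz norm, does not increase $\|\nabla u\|_{L^{n,1}}$, I may assume $u(x)=h(|x|)$ for a nonincreasing absolutely continuous $h\colon[0,1]\to[0,1]$ with $h\equiv 1$ on $[0,r]$ and $h(1)=0$. Setting $f(s)=|h'(s)|$ on $[r,1]$ and $g(t)=f(w(t))$ with $w(t)=(t/\Omega_{n})^{1/n}$, the substitution $t=\Omega_{n}s^{n}$ shows that $|\nabla u|$ and $g$ share a distribution function, so $\||\nabla u|\|_{L^{n,1}(B(0,1))}=\|g\|_{L^{n,1}([\Omega_{n}r^{n},\Omega_{n}])}$. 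Proposition~\ref{Cos IUMJ Prop211} combined with $\int_{r}^{1}|h'(s)|\,ds\ge h(r)-h(1)=1$ now gives $\|\nabla u\|_{L^{n,1}}\ge n\,\Omega_{n}^{1/n}(1-r^{n})^{-1/n'}$, whose $n$-th power is the claimed $n^{n}\Omega_{n}(1-r^{n})^{1-n}$ because $n/n'=n-1$.

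For (ii), specialising (i) to $r=0$ the two bounds coincide at $n^{n}\Omega_{n}$, giving $\mathrm{cap}_{n,1}(\{0\},B(0,1))=n^{n}\Omega_{n}$. Translation invariance reduces a general $x\in\Omega$ to $x=0$. The substitution $v(y)=u(Ry)$, together with a direct computation of the distribution function of $|\nabla v|=R|\nabla u|(R\cdot)$, shows that the factor $R$ coming from the gradient and the factor $R^{-n}$ from the change of variables in the distribution function, combined with the weight $s^{1/n-1}$ in the $L^{n,1}$ integral, cancel; hence $\|\nabla v\|_{L^{n,1}(B(0,1))}=\|\nabla u\|_{L^{n,1}(B(0,R))}$ and $\mathrm{cap}_{n,1}(\{0\},B(0,R))=n^{n}\Omega_{n}$ for every $R>0$. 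Choosing $R_{1}>0$ with $B(0,R_{1})\subset\Omega$ and $R_{2}>0$ with $\Omega\subset B(0,R_{2})$ and invoking the monotonicity in $\Omega$ from Theorem~\ref{Cap Thm p,q relative capacity}(ii) sandwiches $\mathrm{cap}_{n,1}(\{0\},\Omega)$ between two copies of $n^{n}\Omega_{n}$, giving equality. The main obstacle is the P\'olya--Szeg\H{o} step in the lower bound: I need that Schwarz symmetrization does not increase the $L^{n,1}$ quasi-norm of the gradient. This is classical (via the coarea formula combined with isoperimetry, as in Cianchi--Pick and Alvino--Trombetti--Lions), and I would cite it from \cite{Cos3} or the underlying literature; everything else is elementary and already packaged into Proposition~\ref{Cos IUMJ Prop211} and a short scaling calculation.
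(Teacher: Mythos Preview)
Your proposal is correct and follows essentially the same route as the paper: the same tent function for the upper bound, the radial reduction plus Proposition~\ref{Cos IUMJ Prop211} for the lower bound, and then $r=0$ combined with translation invariance, the conformal scale invariance of $\|\nabla\cdot\|_{L^{n,1}}$, and monotonicity in $\Omega$ for part (ii). The only cosmetic differences are that the paper handles admissibility of the tent via the $\frac{1}{1-\varepsilon}$ trick rather than $r_k\searrow r$, and that for the radial reduction the paper cites a ready-made lemma (\cite[Lemma~3.6]{Cos1}) restricting to radial $C_0^\infty$ test functions, whereas you invoke Schwarz symmetrization and P\'olya--Szeg\H{o} for the $L^{n,1}$ norm directly; these are the same idea packaged two ways, and the lemma in \cite{Cos1} avoids having to worry about whether $u^\sharp$ remains in $H_0^{1,(n,1)}(B(0,1))$ with $h(1)=0$ (a point you should secure by first passing to $C_0^\infty$ admissible functions).
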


\begin{proof} We start by proving claim (i).

Let $r \in [0,1)$ be fixed. We want to compute the lower estimate. In order to do that, it is enough to consider via \cite[Lemma 3.6]{Cos1} only the admissible radial functions in $C_{0}^{\infty}(B(0,1))$ that are $1$ on a neighborhood of $\overline{B}(0,r).$ Let $u$ be such a function. There exists a function $f \in C^{\infty}([0,1])$ such that $u(x)=f(|x|)$ for every $x \in B(0,1).$ Hence $|\nabla u(x)|=|f'|(|x|)$ for every $x \in B(0,1).$ Moreover, $f'(t)=0$ for every $t \in [0,r].$ If we define $g:[0, \Omega_n] \rightarrow [0, \infty)$ by $g(t)=|f'|((t/\Omega_n)^{1/n}),$ we notice that $g$ is a continuous function compactly supported in $(\Omega_n r^n, \Omega_n).$ Moreover, since $|\nabla u(x)|=g(\Omega_n |x|^n)$ for every $x \in B(0,1),$ it follows that $|\nabla u|$ and $g$ have the same distribution function. From this and the fact that $g$ is supported in $(\Omega_n r^n, \Omega_n),$ we obtain
$$||\nabla u||_{L^{n,1}(B(0,1); {\mathbf{R}}^n)}=||g||_{L^{n,1}([\Omega_n r^n, \Omega_n])}.$$
But via Proposition \ref{Cos IUMJ Prop211} we have
$$||g||_{L^{n,1}([\Omega_n r^n, \Omega_n])} \ge n \Omega_n^{1/n} (1-r^n)^{-1/n'} ||f'||_{L^{1}([r,1])} $$
and since $||f'||_{L^{1}([r,1])} \ge f(r)-f(1)=1,$ we obtain
$$||\nabla u||_{L^{n,1}(B(0,1); {\mathbf{R}}^n)}=||g||_{L^{n,1}([\Omega_n r^n, \Omega_n])} \ge n \Omega_n^{1/n} (1-r^n)^{-1/n'}.$$

By taking the infimum over all admissible radial functions that are in $C_{0}^{\infty}(B(0,1)),$ we obtain the desired lower estimate.

Now we compute the upper estimate. Let $u_r: B(0,1) \rightarrow [0,1]$ be defined by
$$ u_{r}(x)=\left\{ \begin{array}{cl}
  1 & \mbox{if $0 \le |x| \le r$} \\
 \frac{1-|x|}{1-r}  & \mbox{if $r<|x|<1.$}
 \end{array}
 \right.$$

We notice that $u_{r}$ is a Lipschitz function on $B(0,1)$ that can be extended continuously by $0$ on
$\partial B(0,1).$ Thus, $u_{r}$ in $H_{0}^{1,(n,1)}(B(0,1)).$ Moreover, the function $\frac{1}{1-\varepsilon} u_{r}$ is admissible for the condenser $(\overline{B}(0,r), B(0,1))$ for every $\varepsilon \in (0,1).$

Thus, we have
$${\mathrm{cap}}_{n,1}(\overline{B}(0,r), B(0,1)) \le \frac{1}{(1-\varepsilon)^n} \, ||\nabla u_r||_{L^{n,1}(B(0,1);{\mathbf{R}}^n)}^n$$
for every $\varepsilon \in (0,1).$
By letting $\varepsilon \rightarrow 0,$ the above inequality yields
$${\mathrm{cap}}_{n,1}(\overline{B}(0,r), B(0,1)) \le ||\nabla u_r||_{L^{n,1}(B(0,1);{\mathbf{R}}^n)}^n.$$

An easy computation shows that
$$ |\nabla u_{r}(x)|=\left\{ \begin{array}{cl}
  0 & \mbox{if $0 \le |x| < r$} \\
 \frac{1}{1-r}  & \mbox{if $r<|x|<1.$}
 \end{array}
 \right.$$
Thus,
\begin{eqnarray*}
||\nabla u_r||_{L^{n,1}(B(0,1);{\mathbf{R}}^n)}&=&\frac{1}{1-r} || \chi_{B(0,1) \setminus \overline{B}(0,r)} ||_{L^{n,1}(B(0,1))}=\frac{1}{1-r} \int_{0}^{\Omega_n(1-r^n)} t^{\frac{1}{n}-1} dt\\
&=&\frac{1}{1-r} n [\Omega_n (1-r^n)]^{1/n}=n \Omega_n^{1/n} \, \frac{(1-r^n)^{1/n}}{1-r}.
\end{eqnarray*}
Hence, we obtain the desired upper estimate
$${\mathrm{cap}}_{n,1}(\overline{B}(0,r), B(0,1)) \le n^n \Omega_n \, \frac{(1-r^n)}{(1-r)^n}.$$
We obtained the desired lower and upper estimates for ${\mathrm{cap}}_{n,1}(\overline{B}(0,r), B(0,1))$. This finishes the proof of claim (i).

\vskip 2mm

We prove claim (ii) now. For $r=0,$ claim (i) yields
$${\mathrm{cap}}_{n,1}(\{0\}, B(0,1)) = n^n \Omega_n.$$
From this, the invariance of the $n,1$ relative under translations, \cite[Lemma 3.4]{Cos1} and \cite[Theorem 3.2 (ii)]{Cos1} (see also Theorem \ref{Cap Thm p,q relative capacity} (ii)), the desired conclusion follows. This finishes the proof.

\end{proof}

\subsection{Sharp estimates for the $n,1$ global capacity}
In Theorem \ref{sharp estimates for the n1 relative capacity of a point} we computed the exact value of the $n,1$ capacity of a point relative to all its bounded open neighborhoods from ${\mathbf{R}}^n$. We now prove that this strictly positive number is also the exact value of the $n,1$ global capacity of any point from ${\mathbf{R}}^n.$

\begin{Theorem} \label{sharp estimate for the n1 global capacity of a point}
Let $n \ge 2$ be an integer. We have
$${\rm{Cap}}_{n,1}(\{ x \})=n^n \Omega_n>0$$
for every $x \in {\mathbf{R}}^n.$

\end{Theorem}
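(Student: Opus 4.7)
By the translation invariance of the global capacity, it suffices to prove the claim for $x = 0$. Since $p = n$ and $q = 1$ satisfy $1 \le q \le p$, the quasinorm $\|\cdot\|_{L^{n,1}}$ is a genuine norm, and the index $r = \min(p,q) = 1$, so $\|u\|_{H^{1,n,1}(\mathbf{R}^n)} = \|u\|_{L^{n,1}(\mathbf{R}^n)} + \|\nabla u\|_{L^{n,1}(\mathbf{R}^n;\mathbf{R}^n)}$.

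\emph{Upper bound.} For $0 < r < R$, define the radial Lipschitz function
\[
 u_{r,R}(x) = \begin{cases} 1, & |x| \le r,\\ \frac{R-|x|}{R-r}, & r < |x| < R,\\ 0, & |x| \ge R.\end{cases}
\]
This function lies in $H_0^{1,(n,1)}(\mathbf{R}^n)$ (it is Lipschitz with compact support) and equals $1$ on the open set $B(0,r) \ni 0$; after multiplying by $\frac{1}{1-\varepsilon}$ and letting $\varepsilon\to 0$ we see that $u_{r,R}$ may be used to estimate $\mathrm{Cap}_{n,1}(\{0\})$. An easy rearrangement computation, just as in the proof of Theorem \ref{sharp estimates for the n1 relative capacity of a point}(i), gives
\[
\|\nabla u_{r,R}\|_{L^{n,1}(\mathbf{R}^n;\mathbf{R}^n)} = n\Omega_n^{1/n}\frac{(R^n-r^n)^{1/n}}{R-r}, \quad \|u_{r,R}\|_{L^{n,1}(\mathbf{R}^n)} \le \|\chi_{B(0,R)}\|_{L^{n,1}(\mathbf{R}^n)} = n\Omega_n^{1/n} R.
\]
Putting $r = \delta R$ with $\delta \in (0,1)$ and then letting $R \to 0^+$, the second quantity tends to $0$, so $\mathrm{Cap}_{n,1}(\{0\})^{1/n} \le n\Omega_n^{1/n}(1-\delta^n)^{1/n}/(1-\delta)$; letting $\delta \to 0^+$ yields $\mathrm{Cap}_{n,1}(\{0\}) \le n^n \Omega_n$.

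\emph{Lower bound.} This is the less obvious half. The plan is to reduce the global estimate to the relative one via a cutoff, using Theorem \ref{sharp estimates for the n1 relative capacity of a point}(ii). Fix $R > 1$ and choose $\varphi_R \in C_0^\infty(B(0,2R))$ with $0 \le \varphi_R \le 1$, $\varphi_R \equiv 1$ on $B(0,R)$, and $\|\nabla \varphi_R\|_{L^\infty} \le 2/R$. Let $u \in H_0^{1,(n,1)}(\mathbf{R}^n)$ with $u \ge 1$ on some open neighborhood $U$ of $0$. Since $H_0^{1,(n,1)}(\mathbf{R}^n)$ is the closure of $C_0^\infty(\mathbf{R}^n)$, the restriction of $u$ to $B(0,2R)$ lies in $H^{1,(n,1)}(B(0,2R))$, hence by the Product Rule (Lemma \ref{Product Rule for W1pq}) the function $u\varphi_R$ belongs to $H_0^{1,(n,1)}(B(0,2R))$, with $\nabla(u\varphi_R) = \varphi_R \nabla u + u \nabla \varphi_R$. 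Since $\varphi_R \equiv 1$ on $B(0,R)$, we have $u\varphi_R = u \ge 1$ on the open neighborhood $U \cap B(0,R)$ of $0$, so $u\varphi_R \in \mathcal{A}(\{0\}, B(0,2R))$.

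Combining Theorem \ref{sharp estimates for the n1 relative capacity of a point}(ii), the norm (triangle) inequality for $\|\cdot\|_{L^{n,1}}$, and the pointwise bounds $|\varphi_R \nabla u| \le |\nabla u|$ and $|u\nabla\varphi_R| \le (2/R)|u|$ (which pass to the decreasing rearrangement), we obtain
\[
  n^n \Omega_n = \mathrm{cap}_{n,1}(\{0\}, B(0,2R)) \le \|\nabla(u\varphi_R)\|_{L^{n,1}(B(0,2R);\mathbf{R}^n)}^n \le \left(\|\nabla u\|_{L^{n,1}(\mathbf{R}^n;\mathbf{R}^n)} + \tfrac{2}{R}\|u\|_{L^{n,1}(\mathbf{R}^n)}\right)^n.
\]
Since $\|u\|_{L^{n,1}(\mathbf{R}^n)} < \infty$, letting $R \to \infty$ gives $n^n\Omega_n \le \|\nabla u\|_{L^{n,1}}^n \le \|u\|_{H^{1,n,1}(\mathbf{R}^n)}^n$. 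Taking the infimum over all admissible $u$ yields $\mathrm{Cap}_{n,1}(\{0\}) \ge n^n \Omega_n$, completing the proof.

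\emph{Main obstacle.} The only nontrivial issue is the lower bound: one must transfer information about the global norm of $u$ (which involves the a priori unknown quantity $\|u\|_{L^{n,1}(\mathbf{R}^n)}$) to a purely gradient-based lower bound. The key idea is that any positive power of $R$ in the denominator of the cutoff gradient can be killed by letting $R \to \infty$, which is why the strategy yields the sharp constant already known for the relative capacity of a point.
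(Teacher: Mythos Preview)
Your proof is correct and follows the same overall strategy as the paper: both halves reduce to Theorem \ref{sharp estimates for the n1 relative capacity of a point}(ii), the upper bound via scaled radial Lipschitz competitors and the lower bound by producing a compactly supported admissible function and invoking the relative capacity of a point.

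The one noteworthy difference is in the lower bound. The paper takes the shortcut of observing that, by density, one may assume the competitor $u$ lies in $C_0^\infty(\mathbf{R}^n)$ from the outset; then $u$ is already compactly supported in some bounded open $U\ni 0$, so $u\in\mathcal{A}(\{0\},U)$ and Theorem \ref{sharp estimates for the n1 relative capacity of a point}(ii) applies immediately, giving $n\Omega_n^{1/n}\le\|\nabla u\|_{L^{n,1}}\le\|u\|_{H^{1,n,1}}$. Your route instead keeps a general $u\in H_0^{1,(n,1)}(\mathbf{R}^n)$, multiplies by a cutoff $\varphi_R$, and sends $R\to\infty$ to kill the $\tfrac{2}{R}\|u\|_{L^{n,1}}$ error. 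The density argument is shorter; your cutoff argument is more explicit and has the mild advantage of showing directly that $\|\nabla u\|_{L^{n,1}}\ge n\Omega_n^{1/n}$ for \emph{every} admissible $u$, not just smooth ones. For the upper bound your two-parameter family $u_{r,R}$ with $r=\delta R$ is a harmless variant of the paper's one-parameter family $u_r(x)=(1-|x|/r)_+$; note incidentally that since $u_{r,R}\equiv 1$ on the open ball $B(0,r)$, the factor $1/(1-\varepsilon)$ you mention is not actually needed.
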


\begin{proof} Since the $n,1$ global capacity is invariant under translation, it is enough to prove via Theorem \ref{sharp estimates for the n1 relative capacity of a point} (ii) that
$${\rm{Cap}}_{n,1}(\{0\})=n^n \Omega_n.$$

We prove first that $$n^n \Omega_n \le {\rm{Cap}}_{n,1}(\{0\}).$$

Fix $\varepsilon>0.$ Let $u \in S(\{0\})$ be an admissible function for $\{0\}$ with respect to the $n,1$ global capacity such that
$$||u||_{L^{n,1}({\mathbf{R}}^n)}+||\nabla u||_{L^{n,1}({\mathbf{R}}^n;{\mathbf{R}}^n)} <{\rm{Cap}}_{n,1}(\{0\})^{1/n}+\varepsilon.$$

Without loss of generality we can assume that $u  \in S(\{0\}) \cap C_{0}^{\infty}({\mathbf{R}}^n).$ Then $u$ is compactly supported
in a bounded open set $U \subset {\mathbf{R}}^n$ that contains the origin.
It is easy to see that $S(\{0\}) \cap C_{0}^{\infty}(U)={\mathcal{A}}(\{0\}, U) \cap C_{0}^{\infty}(U).$
Thus, $u \in {\mathcal{A}}(\{0\}, U).$

Therefore we have via Theorem \ref{sharp estimates for the n1 relative capacity of a point} (ii)
\begin{eqnarray*}
n \Omega_n^{1/n}={\rm{cap}}_{n,1}( \{0\}, U)^{1/n} &\le& ||\nabla u||_{L^{n,1}(U; {\mathbf{R}}^n)}=||\nabla u||_{L^{n,1}({\mathbf{R}}^n; {\mathbf{R}}^n)}\\
 &\le& ||u||_{L^{n,1}({\mathbf{R}}^n)}+||\nabla u||_{L^{n,1}({\mathbf{R}}^n;{\mathbf{R}}^n)}\\
 &<& {\rm{Cap}}_{n,1}(\{0\})^{1/n}+\varepsilon.
\end{eqnarray*}
By letting $\varepsilon \rightarrow 0,$ we see that indeed $n^n \Omega_n \le {\rm{Cap}}_{n,1}(\{0\}).$

Conversely, we want to show that ${\rm{Cap}}_{n,1}(\{0\}) \le n^n \Omega_n.$

Let $u: {\mathbf{R}}^n \rightarrow [0,1]$ be defined by
$$ u(x)=\left\{ \begin{array}{cl}
  1 - |x| & \mbox{if $0 \le |x| \le 1$} \\
  0  & \mbox{if $|x|>1.$}
 \end{array}
 \right.$$

We notice that $u$ is a Lipschitz function on ${\mathbf{R}}^n$ that is supported in $\overline{B}(0,1).$ Thus, $u \in H_{0}^{1,(n,1)}({\mathbf{R}}^n).$
Moreover, the function $\frac{1}{1-\varepsilon} u$ is in $S(\{0\})$ for every $\varepsilon$ in $(0,1).$
For every $r$ in $(0,\infty)$ we define $u_r: {\mathbf{R}}^n \rightarrow [0,1]$ by $u_r(x)=u(\frac{x}{r}).$
We notice that $u_r$ is a Lipschitz function on ${\mathbf{R}}^n$ that is supported in $\overline{B}(0,r)$ for every $r>0.$
Thus, $u_r \in H_{0}^{1,(n,1)}({\mathbf{R}}^n)$ for every $r>0.$ Moreover, the function $\frac{1}{1-\varepsilon} u_r$ is in $S(\{0\})$
for every $r>0$ and for every $\varepsilon$ in $(0,1).$

It is easy to see that
$$||u_r||_{L^{n,1}({\mathbf{R}}^n)}=r ||u||_{L^{n,1}({\mathbf{R}}^n)} \mbox{ and } ||\nabla u_r||_{L^{n,1}({\mathbf{R}}^n; {\mathbf{R}}^n)}=||\nabla u||_{L^{n,1}({\mathbf{R}}^n; {\mathbf{R}}^n)}$$
for every $r>0.$
Thus,
\begin{eqnarray*}
{\rm{Cap}}_{n,1}(\{0\})^{1/n} &\le& \frac{1}{1-\varepsilon} \left(||u_r||_{L^{n,1}({\mathbf{R}}^n)}+||\nabla u_r||_{L^{n,1}({\mathbf{R}}^n;{\mathbf{R}}^n)}\right)\\
&=& \frac{1}{1-\varepsilon} \left(r||u||_{L^{n,1}({\mathbf{R}}^n)}+||\nabla u||_{L^{n,1}({\mathbf{R}}^n;{\mathbf{R}}^n)}\right)
\end{eqnarray*}
for every $r>0$ and for every $\varepsilon$ in $(0,1).$ By letting $r \rightarrow 0$ and $\varepsilon \rightarrow 0,$ we obtain
$${\rm{Cap}}_{n,1}(\{0\}) \le ||\nabla u||_{L^{n,1}({\mathbf{R}}^n; {\mathbf{R}}^n)}^n.$$

An easy computation shows that
$$ |\nabla u(x)|=\left\{ \begin{array}{cl}
  1  & \mbox{if $0<|x|<1$}\\
  0 & \mbox{if $|x|>1.$}
 \end{array}
 \right.$$
Thus,
\begin{equation*}
||\nabla u||_{L^{n,1}({\mathbf{R}}^n;{\mathbf{R}}^n)}= || \chi_{B(0,1)} ||_{L^{n,1}({\mathbf{R}}^n)}=\int_{0}^{\Omega_n} t^{\frac{1}{n}-1} dt
=n \Omega_n^{1/n}.
\end{equation*}
Therefore, we obtain
$${\rm{Cap}}_{n,1}(\{0\}) \le ||\nabla u||_{L^{n,1}({\mathbf{R}}^n;{\mathbf{R}}^n)}^n=n^n \Omega_n.$$
This finishes the proof of the theorem.

\end{proof}

The following theorem gives a new proof of the embedding $H_{0}^{1,(n,1)}(\Omega) \hookrightarrow C(\overline{\Omega}) \cap L^{\infty}(\Omega)$ whenever $\Omega \subset {\mathbf{R}}^n$ is an open set and $n \ge 2.$ This embedding was proved before by Stein in \cite{Ste} and by Cianchi-Pick in \cite{CiPi1}.
See \cite[Theorem 3.5 (i)]{CiPi1}. Our approach is different. Our proof uses the theory of the $n,1$ relative capacity in ${\mathbf{R}}^n$, $n \ge 2$ and the the exact value of the $n,1$ relative capacity of a point relative to all its bounded and open neighborhoods from ${\mathbf{R}}^n.$ This value has been obtained in Theorem \ref{sharp estimates for the n1 relative capacity of a point} (ii).

\begin{Theorem} \label{continuous embedding of H01n1 into C cap Linfty}
Let $\Omega \subset {\mathbf{R}}^n$ be an open set, where $n \ge 2$ is an integer.
If $u \in H_{0}^{1,(n,1)}(\Omega),$ then $u$ has a version $u^{*} \in C(\overline{\Omega}) \cap L^{\infty}(\Omega)$ and
\begin{equation} \label{Linfty norm of u le Ln1 norm of nabla u}
||u||_{L^{\infty}(\Omega)} \le \frac{1}{n \Omega_n^{1/n}} ||\nabla u||_{L^{n,1}(\Omega; {\mathbf{R}}^n)}.
\end{equation}
Moreover, if $\Omega \neq {\mathbf{R}}^n,$ then $u^{*}=0$ on $\partial \Omega.$

\end{Theorem}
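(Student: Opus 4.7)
The plan is to derive the embedding directly from the exact capacity value obtained in Theorem \ref{sharp estimates for the n1 relative capacity of a point} (ii): namely ${\rm cap}_{n,1}(\{x\}, U) = n^{n} \Omega_{n}$ for every bounded open $U \subset \mathbf{R}^{n}$ containing $x$. The constant $n \Omega_{n}^{1/n}$ appearing in (\ref{Linfty norm of u le Ln1 norm of nabla u}) is precisely $({\rm cap}_{n,1}(\{x\}, U))^{1/n}$, which strongly suggests proving the inequality first on smooth test functions by testing the capacity against a rescaled $\varphi$, and then extending to $H_{0}^{1,(n,1)}(\Omega)$ by density.

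First I would fix $\varphi \in C_{0}^{\infty}(\Omega)$ and set $M = \|\varphi\|_{L^{\infty}(\Omega)}$. Assuming $M > 0$, pick $x_{0} \in \Omega$ with $|\varphi(x_{0})| = M$ and, after possibly replacing $\varphi$ by $-\varphi$, arrange $\varphi(x_{0}) = M$. Choose any bounded open set $U$ with $\operatorname{supp} \varphi \subset U \subset \Omega$; then for each $\varepsilon \in (0, M)$ the function $\varphi/(M - \varepsilon)$ belongs to $C_{0}^{\infty}(U) \subset H_{0}^{1,(n,1)}(U)$ and, by continuity, exceeds $1$ on a neighborhood of $x_{0}$, so it is admissible for the condenser $(\{x_{0}\}, U)$. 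Feeding this into Theorem \ref{sharp estimates for the n1 relative capacity of a point} (ii) gives
\[
n^{n} \Omega_{n} \;=\; {\rm cap}_{n,1}(\{x_{0}\}, U) \;\le\; \frac{\|\nabla \varphi\|_{L^{n,1}(\Omega; \mathbf{R}^{n})}^{n}}{(M - \varepsilon)^{n}},
\]
and letting $\varepsilon \to 0^{+}$ yields the bound (\ref{Linfty norm of u le Ln1 norm of nabla u}) on every $\varphi \in C_{0}^{\infty}(\Omega)$.

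Next I would extend the bound to an arbitrary $u \in H_{0}^{1,(n,1)}(\Omega)$ by selecting $\varphi_{k} \in C_{0}^{\infty}(\Omega)$ with $\varphi_{k} \to u$ in $H^{1,(n,1)}(\Omega)$. Applying Step 1 to the differences $\varphi_{j} - \varphi_{k}$, together with the comparison $\|\cdot\|_{L^{n,1}} \le \|\cdot\|_{L^{(n,1)}}$ recalled in Section \ref{section Lorentz spaces}, shows that $(\varphi_{k})$ is Cauchy in $L^{\infty}(\mathbf{R}^{n})$ after extension by zero; hence it converges uniformly to a continuous function $u^{*}$ on $\mathbf{R}^{n}$ that vanishes on $\mathbf{R}^{n} \setminus \Omega$, delivering the boundary condition $u^{*} = 0$ on $\partial \Omega$ when $\Omega \ne \mathbf{R}^{n}$. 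Since $\varphi_{k} \to u$ in $L^{(n,1)}(\Omega) \subset L^{1}_{\mathrm{loc}}(\Omega)$, a subsequence converges a.e.\ to both $u$ and $u^{*}$, identifying $u^{*}$ as a continuous representative of $u$. Passing to the limit in the Step 1 bound for $\varphi_{k}$, with uniform convergence on the left and the embedding $L^{(n,1)} \hookrightarrow L^{n,1}$ on the right, gives (\ref{Linfty norm of u le Ln1 norm of nabla u}) for $u$.

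The argument has no genuinely hard step once Theorem \ref{sharp estimates for the n1 relative capacity of a point} (ii) is in hand, since that theorem already encodes the sharp embedding constant. The items that warrant care are the admissibility check in Step 1, handled by the continuity of $\varphi$ and the cushion $\varepsilon > 0$, and the matching of the quasinorm $\|\cdot\|_{L^{n,1}}$ used in the capacity with the norm $\|\cdot\|_{L^{(n,1)}}$ used to define $H_{0}^{1,(n,1)}(\Omega)$, which is resolved cleanly by the standing inequality $\|\cdot\|_{L^{n,1}} \le \|\cdot\|_{L^{(n,1)}}$.
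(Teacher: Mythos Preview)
Your proof is correct and follows essentially the same approach as the paper: first establish (\ref{Linfty norm of u le Ln1 norm of nabla u}) on $C_{0}^{\infty}(\Omega)$ by testing admissibility against the point capacity of Theorem \ref{sharp estimates for the n1 relative capacity of a point} (ii), then extend by density via uniform Cauchy-ness of the approximating sequence. The only cosmetic difference is in Step~1: you test directly at a maximizer $x_{0}$ with the admissible function $\varphi/(M-\varepsilon)$, whereas the paper passes through the open level set $O_{\lambda}=\{|u|>\lambda\|u\|_{L^{\infty}}\}$ and invokes monotonicity of ${\rm cap}_{n,1}(\cdot,U)$ to compare with a point inside $O_{\lambda}$; your version is slightly more direct but the two arguments are interchangeable.
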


\begin{proof} First we prove (\ref{Linfty norm of u le Ln1 norm of nabla u}) for the functions in $C_{0}^{\infty}(\Omega).$
So let $u$ be a function in $C_{0}^{\infty}(\Omega).$ We can assume without loss of generality that $u$ is not identically zero.
Let $U \subset \subset \Omega$ be a bounded open set such that $\mbox{ supp } u \subset \subset U.$

Fix $\lambda \in (0,1).$ Let $O_{\lambda}=\{ x \in \Omega: |u(x)|> \lambda ||u||_{L^{\infty}(\Omega)} \}.$
Then $O_{\lambda} \subset \subset U$ is a bounded nonempty open set and the Lipschitz function $\frac{|u|}{ \lambda ||u||_{L^{\infty}(\Omega)}}$
is supported in $U$ and is admissible for the condenser $(O_\lambda, U)$ with respect to the $n,1$-capacity.

Let $x \in O_{\lambda}.$ The monotonicity of the ${\rm{cap}}_{n,1}(\cdot, U)$ set function and Theorem \ref{sharp estimates for the n1 relative capacity of a point} (ii) imply

$$n \Omega_n^{1/n}={\rm{cap}}_{n,1}(\{x\}, U)^{1/n} \le {\rm{cap}}_{n,1}(O_{\lambda}, U)^{1/n} \le  \frac{||\nabla u||_{L^{n,1}(\Omega; {\mathbf{R}}^n)}}{\lambda ||u||_{L^{\infty}(\Omega)}}.$$
Thus,
$$||u||_{L^{\infty}(\Omega)} \le \frac{1}{n \Omega_n^{1/n}} \cdot \lambda^{-1} ||\nabla u||_{L^{n,1}(\Omega; {\mathbf{R}}^n)}$$
for every $\lambda \in (0,1).$ By letting $\lambda \rightarrow 1,$ we obtain the desired inequality for $u \in C_{0}^{\infty}(\Omega).$

Suppose now that $u \in H_{0}^{1,(n,1)}(\Omega).$ Let $(u_k)_{k \ge 1} \subset C_{0}^{\infty}(\Omega)$ be a sequence that converges to $u$ in $H_{0}^{1,(n,1)}(\Omega).$

We can assume without loss of generality that the sequence $(u_k)_{k \ge 1} \subset C_{0}^{\infty}(\Omega)$ is chosen such that $u_k$ converges pointwise to $u$ almost everywhere in $\Omega,$ $\nabla u_k$ converges pointwise to $\nabla u$ almost everywhere in $\Omega,$ and such that
$$||u_{k+1}-u_k||_{L^{n,1}(\Omega)}+||\nabla u_{k+1} - \nabla u_k||_{L^{n,1}(\Omega; {\mathbf{R}}^n)} < 2^{-2k}, \, \forall k \ge 1.$$

By applying the inequality (\ref{Linfty norm of u le Ln1 norm of nabla u}) to the smooth functions $u_k$ and $u_{k+1}-u_{k}$ that are compactly supported in $\Omega$ for every $k \ge 1$, we obtain
\begin{eqnarray}
\label{Linfty norm of uk le Ln1 norm of nabla uk}
|u_k(x)| &\le& \frac{1}{n \Omega_n^{1/n}} ||\nabla u_k||_{L^{n,1}(\Omega; {\mathbf{R}}^n)} \mbox{ and }\\
\label{Linfty norm of uk+1-uk le Ln1 norm of nabla uk+1-uk}
|u_{k+1}(x)-u_k(x)| &\le& \frac{1}{n \Omega_n^{1/n}} ||\nabla u_{k+1}-\nabla u_k||_{L^{n,1}(\Omega; {\mathbf{R}}^n)}
\end{eqnarray}
for all $x$ in $\Omega$ and for every $k \ge 1.$

From the choice of the sequence $u_k$ and (\ref{Linfty norm of uk+1-uk le Ln1 norm of nabla uk+1-uk}), it follows that the sequence $u_k$ is uniformly fundamental on $\Omega.$ Thus, $u_k$ converges uniformly in $\Omega$ to a function $v \in C(\Omega).$ Since the functions $u_k$ are in $C_{0}^{\infty}(\Omega),$
we can assume without loss of generality that they are in $C(\overline{\Omega}).$ This is trivial when $\Omega={\mathbf{R}}^n;$ when $\Omega \neq {\mathbf{R}}^n$ (that is, when $\partial \Omega \neq \emptyset$), we set all the functions $u_k$ to be $0$ on $\partial \Omega.$
Thus, the sequence $u_k$ is uniformly fundamental in $\overline{\Omega}$ and its uniform limit $v$ is continuous on $\overline{\Omega}.$ Moreover,
if $\Omega \neq {\mathbf{R}}^n,$ then $v$ is $0$ on $\partial \Omega.$

Since $u_k$ converges pointwise almost everywhere to $u$ in $\Omega$ and uniformly to $v$ in $\Omega,$ it follows that $u=v$ almost everywhere in $\Omega.$ Consequently, $v \in H_{0}^{1,(n,1)}(\Omega)$ and $\nabla u=\nabla v$ almost everywhere in $\Omega.$

By letting $k \rightarrow \infty$ in (\ref{Linfty norm of uk le Ln1 norm of nabla uk}), we obtain
\begin{eqnarray*}
|v(x)|=\lim_{k \rightarrow \infty} |u_k(x)| &\le& \frac{1}{n \Omega_n^{1/n}} \lim_{k \rightarrow \infty} ||\nabla u_k||_{L^{n,1}(\Omega; {\mathbf{R}}^n)}\\
&=&\frac{1}{n \Omega_n^{1/n}} ||\nabla u||_{L^{n,1}(\Omega; {\mathbf{R}}^n)}\\
&=&\frac{1}{n \Omega_n^{1/n}} ||\nabla v||_{L^{n,1}(\Omega; {\mathbf{R}}^n)}
\end{eqnarray*}
for every $x$ in $\Omega.$ Since $u=v$ almost everywhere in $\Omega,$ this implies that $u \in L^{\infty}(\Omega)$ and (\ref{Linfty norm of u le Ln1 norm of nabla u}) holds.
This finishes the proof of the theorem.

\end{proof}

\begin{Remark} \label{Linfty norm of u equal Ln1 norm of nabla u} Suppose $n \ge 2$ is an integer. We can see easily that
we have equality in (\ref{Linfty norm of u le Ln1 norm of nabla u}) for the functions $u_r$ that were used in the proof of Theorem \ref{sharp estimate for the n1 global capacity of a point}. In the aforementioned theorem, for a fixed $r$ in $(0,\infty)$ the function $u_r: {\mathbf{R}}^n \rightarrow [0,1]$ was defined by
$$ u_r(x)=\left\{ \begin{array}{cl}
  1 - \frac{|x|}{r} & \mbox{if $0 \le |x| \le r$} \\
  0  & \mbox{if $|x|>r.$}
 \end{array}
 \right.$$

\end{Remark}

As a consequence of Theorem \ref{continuous embedding of H01n1 into C cap Linfty}, we now show that every function in $H^{1,(n,1)}_{loc}(\Omega)$ has a version that is continuous on $\Omega.$ It is pretty clear to see that this result also follows as a consequence of the aforementioned results obtained by Stein in \cite{Ste} and by Cianchi-Pick in \cite{CiPi1}.

\begin{Proposition} \label{continuous version for functions in H1n1}
Let $\Omega \subset {\mathbf{R}}^n$ be an open set, where $n \ge 2$ is an integer. Suppose that $u \in H_{loc}^{1, (n,1)}(\Omega).$
Then $u$ has a version $u^{*} \in C(\Omega).$

\end{Proposition}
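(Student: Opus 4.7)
The plan is to reduce the local statement on $\Omega$ to the global embedding from Theorem \ref{continuous embedding of H01n1 into C cap Linfty} by multiplication with smooth cutoffs and then patching. Fix an arbitrary point $x_0\in\Omega$. I would first choose nested open sets $\Omega'\subset\subset\Omega''\subset\subset\Omega$ with $x_0\in\Omega'$ (for instance, two concentric balls centered at $x_0$). By the definition of $H_{loc}^{1,(n,1)}(\Omega)$, the restriction of $u$ to $\Omega''$ lies in $H^{1,(n,1)}(\Omega'')$.

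Next, I would pick a cutoff $\varphi\in C_0^\infty(\Omega'')$ with $0\le\varphi\le 1$ and $\varphi\equiv 1$ on a neighborhood of $\overline{\Omega'}$. Applying Lemma \ref{Product Rule for W1pq} with $(p,q)=(n,1)$ and the open set $\Omega''$ yields $u\varphi\in H_0^{1,(n,1)}(\Omega'')$. Since $u\varphi$ is compactly supported inside $\Omega''$, extension by zero gives a function (still denoted $u\varphi$) in $H_0^{1,(n,1)}(\Omega'')$ which is unambiguous. Now invoke Theorem \ref{continuous embedding of H01n1 into C cap Linfty} for the open set $\Omega''$: the function $u\varphi$ admits a version $w_{x_0}^{*}\in C(\overline{\Omega''})\cap L^{\infty}(\Omega'')$. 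Because $\varphi\equiv 1$ on the neighborhood of $\overline{\Omega'}$, we have $u\varphi=u$ almost everywhere on $\Omega'$, so $w_{x_0}^{*}|_{\Omega'}$ is a continuous version of $u$ on $\Omega'$.

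Finally, I would patch these local continuous versions into a single continuous version on all of $\Omega$. Cover $\Omega$ by a countable collection of such neighborhoods $\{\Omega'_j\}_{j\ge 1}$ with continuous versions $w_j^{*}\in C(\Omega'_j)$ of $u|_{\Omega'_j}$. On each overlap $\Omega'_i\cap\Omega'_j$ both $w_i^{*}$ and $w_j^{*}$ equal $u$ almost everywhere, hence (being continuous) they coincide everywhere on the overlap. Define $u^{*}(x)=w_j^{*}(x)$ whenever $x\in\Omega'_j$; this is well defined, continuous on $\Omega$ (continuity being a local property), and equal to $u$ almost everywhere in $\Omega$.

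The one point that deserves mild care is the legitimacy of the step ``$u\varphi$ extended by zero belongs to $H_0^{1,(n,1)}(\Omega'')$'' and the application of Theorem \ref{continuous embedding of H01n1 into C cap Linfty} on the bounded open set $\Omega''$; this is the main (though minor) technical obstacle, and it is handled by choosing $\Omega''$ bounded and using the fact that the support of $u\varphi$ is a compact subset of $\Omega''$, so approximating smooth functions producing $u\varphi$ in $H^{1,(n,1)}(\Omega'')$ can be assumed compactly supported in $\Omega''$ by Lemma \ref{Product Rule for W1pq}. Everything else is routine manipulation with cutoff functions and the fact that continuous functions agreeing almost everywhere agree pointwise.
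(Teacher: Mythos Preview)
Your proof is correct and follows the same overall strategy as the paper: localize via smooth cutoffs, invoke Lemma \ref{Product Rule for W1pq} to land in $H_0^{1,(n,1)}$, apply Theorem \ref{continuous embedding of H01n1 into C cap Linfty}, and then pass from local to global. The only real difference is in the patching step. The paper uses a partition of unity $\{\psi_j\}$ subordinate to an exhaustion $\Omega_j\subset\subset\Omega_{j+1}$, writes $u=\sum_j u\psi_j$, applies the embedding to each summand, and defines $u^*=\sum_j (u\psi_j)^*$; continuity follows because the sum is locally finite. You instead produce a continuous version on a neighborhood of each point and glue them using the fact that two continuous functions agreeing almost everywhere on an open set agree everywhere. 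Both devices are standard; the paper's partition-of-unity route gives an explicit global formula for $u^*$ in one stroke, while your overlap argument avoids constructing the partition but requires the (easy) uniqueness-on-overlaps observation. Either way the substance is identical.
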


\begin{proof} Choose open sets $\emptyset=\Omega_0 \subsetneq \Omega_j \subset \subset \Omega_{j+1}, j \ge 1$ such that $\bigcup_j \Omega_j=\Omega.$ Like in the proof of \cite[Theorem 4.11]{Cos4} (see also Heinonen-Kilpelainen-Martio \cite[Lemma 1.15]{HKM}), we construct a sequence
$\psi_j, j \ge 1$ such that $\psi_j \in C_{0}^{\infty}(\Omega_{j+1} \setminus \overline{\Omega}_{j-1})$ for every $j \ge 1$ and $\sum_j \psi_j \equiv 1$ on $\Omega.$

We notice via Lemma \ref{Product Rule for W1pq} that $u \psi_j \in H_{0}^{1, (n,1)}(\Omega)$ is compactly supported in $\Omega$ for all $j \ge 1.$ By applying Theorem \ref{continuous embedding of H01n1 into C cap Linfty} to the sequence $(u \psi_j)_{j \ge 1},$ we find a continuous version $(u \psi_j)^{*}$ of $u \psi_j$ that is compactly supported in $\Omega$ for every $j \ge 1.$ Then
$u^{*}:=\sum_j (u \psi_j)^{*}$ is a version of $u=\sum_j u \psi_j.$ Since on every bounded open set $U \subset \subset \Omega$ only finitely many of the functions $(u \psi_j)^{*}$ are non-vanishing, it follows immediately that $u^{*}$ is in fact continuous on $\Omega.$ This finishes the proof.

\end{proof}

\section{Bounded sequences in non-reflexive Sobolev-Lorentz spaces}
\label{section Bounded sequences in non-reflexive Sobolev-Lorentz spaces}

Whenever we proved a Monotone Convergence Theorem for the relative and global Sobolev-Lorentz $(p,q)$ capacities associated to reflexive spaces $H_{0}^{1,(p,q)}(\Omega),$ $1<q<\infty,$ we always used the fact (via Theorems \ref{HKM93 Thm130}-\ref{HKM93 Thm131}) that every bounded sequence $(u_k)_{k \ge 1} \subset H_{0}^{1,(p,q)}(\Omega)$ has a subsequence $(u_{k_i})$ that converges weakly in $H^{1,(p,q)}(\Omega)$ to a function $u \in H_{0}^{1,(p,q)}(\Omega).$

We know that the spaces $H^{1,(p,1)}(\Omega)$ and $H_{0}^{1,(p,1)}(\Omega)$ are not reflexive. See for instance the discussion from Section 4.1 in our paper \cite{Cos4}. In this section we prove a weak convergence theorem concerning $H^{1,(p,1)}(\Omega)$ and $H_{0}^{1,(p,q)}(\Omega)$ that is similar to Theorem \ref{HKM93 Thm131}. The weak convergence result concerning $H^{1,(p,1)}(\Omega)$ is valid
whenever $1<p<\infty.$ See Theorem \ref{Bdd in H01p1 weak limit in H01p1} (i). The weak convergence result concerning $H_{0}^{1,(p,1)}(\Omega)$ is valid whenever $1 \le n<p<\infty$ or $1<n=p<\infty.$ See Theorem \ref{Bdd in H01p1 weak limit in H01p1} (ii).

\begin{Theorem} \label{Bdd in H01p1 weak limit in H01p1}
Let $\Omega \subset {\mathbf{R}}^n$ be an open set, where $n \ge 1$ is an integer. Suppose that $1<p,q<\infty.$ Let $u$ be a function in $H^{1,(p,q)}(\Omega)$ and let $(u_k)_{k \ge 1} \subset H^{1, (p,1)}(\Omega)$ be a sequence that is bounded in $H^{1, (p,1)}(\Omega).$ Suppose that $u_k$ converges to $u$ weakly in $L^{(p,q)}(\Omega)$ and that $\nabla u_k$ converges to $\nabla u$ weakly in $L^{(p,q)}(\Omega; {\mathbf{R}}^n).$

\par {\rm(i)} We have that $u$ is in $H^{1,(p,1)}(\Omega).$  Moreover, the sequence $u_k$ converges to $u$ weakly in $L^{(p,s)}(\Omega),$ while the sequence $\nabla u_k$ converges to $\nabla u$ weakly in $L^{(p,s)}(\Omega; {\mathbf{R}}^n)$ for every $1<s<\infty.$ Also, we have

\begin{equation} \label{weak convergence in Lp1 for u and partial i u}
\int_{\Omega} u_k(x) \varphi(x) \, dx \rightarrow  \int_{\Omega} u(x) \varphi(x) \, dx \mbox{ and }
\int_{\Omega} \partial_i u_k(x) \varphi(x) \, dx \rightarrow  \int_{\Omega} \partial_i u(x) \varphi(x) \, dx, i=1, \ldots, n
\end{equation}

for every simple function $\varphi \in L^{p',\infty}(\Omega).$ Furthermore, we have

\begin{eqnarray} \label{H1p1 norm of u le liminf H1p1 norm of uk}
||u||_{H^{1,(p,1)}(\Omega)} &\le& \liminf_{k \rightarrow \infty} ||u_k||_{H^{1,(p,1)}(\Omega)} \\
\label{H1p1 quasinorm of u le liminf H1p1 quasinorm of uk}
||u||_{H^{1,p,1}(\Omega)} &\le& \liminf_{k \rightarrow \infty} ||u_k||_{H^{1,p,1}(\Omega)} \\
\label{Lp1 norm of nabla u le liminf Lp1 norm of nabla uk}
||\nabla u||_{L^{(p,1)}(\Omega; {\mathbf{R}}^n)} &\le& \liminf_{k \rightarrow \infty} ||\nabla u_k||_{L^{(p,1)}(\Omega;{\mathbf{R}}^n)} \mbox { and } \\
\label{Lp1 quasinorm of nabla u le liminf Lp1 quasinorm of nabla uk}
||\nabla u||_{L^{p,1}(\Omega; {\mathbf{R}}^n)} &\le& \liminf_{k \rightarrow \infty} ||\nabla u_k||_{L^{p,1}(\Omega; {\mathbf{R}}^n)}.
\end{eqnarray}

\par {\rm(ii)} Suppose that $1 \le n<p<\infty$ or $1<n=p<\infty.$ If $u_j \in H_{0}^{1,(p,1)}(\Omega)$ for all $j \ge 1,$ then $u \in H_{0}^{1,(p,1)}(\Omega).$

\end{Theorem}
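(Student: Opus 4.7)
The plan is to split the theorem along its two claims and to move information between the non-reflexive target $H^{1,(p,1)}(\Omega)$ and the reflexive intermediate spaces $H^{1,(p,s)}(\Omega)$, $1<s<\infty$, using the inclusion $L^{(p,1)}\subset L^{(p,s)}$ and Mazur's lemma. For part (i), I would first upgrade the hypothesized weak convergence in $L^{(p,q)}$ to weak convergence in every $L^{(p,s)}$, $1<s<\infty$, by an Urysohn subsequence argument: $(u_k,\nabla u_k)$ is bounded in the reflexive space $L^{(p,s)}(\Omega)\times L^{(p,s)}(\Omega;\mathbf{R}^n)$, any weakly convergent sub-subsequence must have limit $(u,\nabla u)$ when tested against the dense class $C_c^\infty(\Omega)\subset L^{(p',s')}\cap L^{(p',q')}$, so the whole sequence converges weakly in $L^{(p,s)}$. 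Since every simple function $\varphi\in L^{p',\infty}$ has finite-measure support and therefore lies in every $L^{(p',s')}$, (\ref{weak convergence in Lp1 for u and partial i u}) is immediate.

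The Fatou-type bounds (\ref{H1p1 norm of u le liminf H1p1 norm of uk})--(\ref{Lp1 quasinorm of nabla u le liminf Lp1 quasinorm of nabla uk}) are the technical heart of part (i). Set $L=\liminf_k\|u_k\|_{L^{(p,1)}}$ and pass to a subsequence $(u_{k_j})$ with $\|u_{k_j}\|_{L^{(p,1)}}\to L$. Mazur's lemma in $H^{1,(p,s)}(\Omega)$ produces convex combinations $w_\ell=\sum_{j\ge\ell}\mu_{\ell,j}u_{k_j}$ with $(w_\ell,\nabla w_\ell)\to(u,\nabla u)$ strongly in $L^{(p,s)}\times L^{(p,s)}(\Omega;\mathbf{R}^n)$ and, after a further subsequence, pointwise a.e. Because $q=1\le p$, both $\|\cdot\|_{L^{(p,1)}}$ and $\|\cdot\|_{L^{p,1}}$ are genuine norms, so $\|w_\ell\|_{L^{(p,1)}}\le\sup_{j\ge\ell}\|u_{k_j}\|_{L^{(p,1)}}$, whose limit is $L$. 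From the a.e.\ inclusion $\{|u|>s\}\subset\liminf_\ell\{|w_\ell|>s\}$ I would deduce $\lambda_{[u]}(s)\le\liminf_\ell\lambda_{[w_\ell]}(s)$ by Fatou, then $u^{*}(t)\le\liminf_\ell w_\ell^{*}(t)$, then $u^{**}(t)\le\liminf_\ell w_\ell^{**}(t)$; combining with a further Fatou inside the representations $\|f\|_{L^{p,1}}=p\int_0^\infty\lambda_{[f]}(s)^{1/p}\,ds$ and $\|f\|_{L^{(p,1)}}=\int_0^\infty t^{1/p-1}f^{**}(t)\,dt$ yields $\|u\|_{L^{p,1}},\|u\|_{L^{(p,1)}}\le\liminf_\ell\|w_\ell\|\le L$. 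Running the same Mazur/Fatou machinery on $\nabla u_k$ delivers (\ref{Lp1 norm of nabla u le liminf Lp1 norm of nabla uk})--(\ref{Lp1 quasinorm of nabla u le liminf Lp1 quasinorm of nabla uk}), and in particular $u\in H^{1,(p,1)}(\Omega)$.

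For part (ii), I would extend each $u_k$ by zero to $\tilde u_k\in H_0^{1,(p,1)}(\mathbf{R}^n)$ and observe, via a cutoff-and-mollification argument, that $H^{1,(p,1)}(\mathbf{R}^n)=H_0^{1,(p,1)}(\mathbf{R}^n)$. Then $(\tilde u_k,\nabla\tilde u_k)$ is bounded in $H^{1,(p,1)}(\mathbf{R}^n)$ and converges weakly in $L^{(p,q)}(\mathbf{R}^n)$ to the zero-extension $(\tilde u,\nabla\tilde u)$, so part (i) applied on $\mathbf{R}^n$ yields $\tilde u\in H^{1,(p,1)}(\mathbf{R}^n)=H_0^{1,(p,1)}(\mathbf{R}^n)$. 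Under the hypothesis $1\le n<p$ or $1<n=p$, Theorem \ref{continuous embedding of H01n1 into C cap Linfty} (case $n=p$) or \cite[Theorems 5.5, 5.6]{Cos4} (case $n<p$), applied on $\mathbf{R}^n$, provides a continuous bounded representative $\tilde u^{*}$; since $\tilde u=0$ a.e.\ on $\mathbf{R}^n\setminus\Omega$, continuity forces $\tilde u^{*}=0$ on $\mathbf{R}^n\setminus\Omega$, so $u^{*}:=\tilde u^{*}|_\Omega$ is continuous on $\overline\Omega$ and vanishes on $\partial\Omega$. A standard spatial cutoff $\eta_R$ composed with the value-truncation $\mathrm{sgn}(u^{*})(|u^{*}|-\varepsilon)^+$ then produces a family $v_{R,\varepsilon}\in H^{1,(p,1)}(\Omega)$ whose closed supports $\{|u^{*}|\ge\varepsilon\}\cap\overline{B}(0,2R)$ are compact in $\Omega$ (using $u^{*}=0$ on $\partial\Omega$) and such that $v_{R,\varepsilon}\to u$ in $H^{1,(p,1)}$ as $R\to\infty$, $\varepsilon\to 0$, by dominated convergence in the absolutely continuous norm $L^{(p,1)}$; mollifying each $v_{R,\varepsilon}$ produces approximations in $C_c^\infty(\Omega)$, so $u\in H_0^{1,(p,1)}(\Omega)$.

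The main obstacle is the Fatou step for the $(p,1)$ and $p,1$ norms in part (i): weak or weak-$*$ lower semicontinuity of these norms along $(u_k)$ itself is unavailable, so the Mazur--subsequence route combined with distribution-function/layer-cake representations is essential, and the prior selection of a subsequence realizing the liminf is what prevents the bound from degenerating to a limsup. A secondary obstacle, which pins down the admissible range of $p$ in part (ii), is producing a continuous representative of $\tilde u$ vanishing on $\partial\Omega$: this uses precisely the hypothesis $p\ge n$ through the embedding $H_0^{1,(p,1)}(\mathbf{R}^n)\hookrightarrow C(\mathbf{R}^n)\cap L^\infty$, and the failure of that embedding for $1<p<n$ is exactly what leaves the analogous conclusion open in that range.
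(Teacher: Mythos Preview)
Your approach is essentially the paper's. For part (i) both arguments upgrade the weak convergence to every $L^{(p,s)}$, $1<s<\infty$, via reflexivity and a subsequence/uniqueness-of-limit argument, and then prove the lower-semicontinuity inequalities by selecting a subsequence realizing the relevant liminf, applying Mazur in $L^{(p,s)}\times L^{(p,s)}(\Omega;\mathbf{R}^n)$, extracting a.e.\ convergence of the convex combinations and their gradients, and applying Fatou to the rearrangements; the only organizational difference is that the paper first establishes $u,\partial_i u\in L^{p,1}$ by a short duality step (testing against simple functions and invoking the Bennett--Sharpley description of $L^{p,1}$), whereas you read membership off directly from the Fatou bound. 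For part (ii) both extend by zero to $\mathbf{R}^n$, apply part (i) there, invoke the continuous embedding to produce a representative vanishing on $\partial\Omega$, and then conclude $u\in H_0^{1,(p,1)}(\Omega)$; the paper splits into bounded and unbounded $\Omega$ and cites \cite[Lemma 4.21]{Cos4} for the last step, while your cutoff-plus-value-truncation carries out that lemma directly and uniformly.

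One point in your sketch needs tightening. You select a subsequence realizing $\liminf_k\|u_k\|_{L^{(p,1)}}$ and then say ``running the same machinery on $\nabla u_k$'' yields (\ref{Lp1 norm of nabla u le liminf Lp1 norm of nabla uk})--(\ref{Lp1 quasinorm of nabla u le liminf Lp1 quasinorm of nabla uk}); but inequalities (\ref{H1p1 norm of u le liminf H1p1 norm of uk})--(\ref{H1p1 quasinorm of u le liminf H1p1 quasinorm of uk}) concern the \emph{combined} norm $\|u_k\|+\|\nabla u_k\|$, and two separate liminfs need not add up to the liminf of the sum. The paper handles this by running a dedicated Mazur pass on a subsequence chosen to realize $\liminf_k\|u_k\|_{H^{1,(p,1)}}$ (and a parallel one for the $p,1$ quasinorm), so that a single sequence of convex combinations converges a.e.\ simultaneously in the function and the gradient; you should make the same selection explicitly.
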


\begin{proof} (i) We recall that $H^{1,(p,r)}(\Omega) \subset H^{1,(p,s)}(\Omega)$ and $H_{0}^{1,(p,r)}(\Omega) \subset H_{0}^{1,(p,s)}(\Omega)$ whenever $1 \le r <s \le \infty.$ (See \cite[Theorem 4.3]{Cos4}).
Moreover, from Remark \ref{relation between Lpr and Lps}, (\ref{relation between the Lpr and the Lps norm m ge 1}) and the definition of the Sobolev-Lorentz spaces and norms on $\Omega,$ it follows that there exists a constant $C(p,r,s)>0$ such that
\begin{equation}
\label{relation between the Lpr and the Lps Sobolev norm} ||v||_{H^{1,(p,s)}(\Omega)} \le C(p,r,s) ||v||_{H^{1,(p,r)}(\Omega)}
\end{equation}
for every $v \in H^{1,(p,r)}(\Omega).$
Thus, any sequence that is bounded in $H^{1,(p,1)}(\Omega)$ is also bounded in $H^{1,(p,s)}(\Omega)$
whenever $1<s<\infty.$ Similarly, any sequence that is bounded in $H_{0}^{1,(p,1)}(\Omega)$ is also bounded in $H_{0}^{1,(p,s)}(\Omega)$ whenever $1<s<\infty.$ The spaces $H^{1,(p,s)}(\Omega)$ and $H_{0}^{1,(p,s)}(\Omega)$ are reflexive whenever $1<p,s<\infty.$ See the discussion before Theorem 4.1 from our paper \cite{Cos4}.

Let $q$ in $(1,\infty)$ be fixed. From Theorem \ref{HKM93 Thm131} it follows that for any sequence $\widetilde{u}_k$ that is bounded in $H^{1,(p,q)}(\Omega)$ there is a subsequence $\widetilde{u}_{k_i}$ and a function $\widetilde{u} \in H^{1,(p,q)}(\Omega)$ such that $\widetilde{u}_{k_i}$ converges weakly to $\widetilde{u}$ in $L^{(p,q)}(\Omega)$ and $\nabla \widetilde{u}_{k_i}$ converges weakly to $\nabla \widetilde{u}$ in $L^{(p,q)}(\Omega; {\mathbf{R}}^n).$ Moreover, from Theorem \ref{HKM93 Thm131} it follows that $\widetilde{u}$ is in $H_{0}^{1,(p,q)}(\Omega)$ if all the functions $\widetilde{u}_j$ are in $H_{0}^{1, (p,q)}(\Omega).$

If such a sequence $\widetilde{u}_k$ is bounded in $H^{1,(p,1)}(\Omega),$ from (\ref{relation between the Lpr and the Lps Sobolev norm}), Theorem \ref{HKM93 Thm131} and from the discussion in the previous paragraph we obtain the existence of a subsequence $\widetilde{u}_{k_i}$ and of a function $\widetilde{u} \in H^{1,(p,q)}(\Omega)$ such that $\widetilde{u}_{k_i}$ converges weakly in $L^{p,q}(\Omega)$ to $\widetilde{u}$ and such that $\nabla \widetilde{u}_{k_i}$ converges weakly to $\nabla \widetilde{u}$ in $L^{p,q}(\Omega; {\mathbf{R}}^n).$ Moreover, from the discussion in the previous paragraph and from (\ref{relation between the Lpr and the Lps Sobolev norm}) it follows that $\widetilde{u}$ is in $H_{0}^{1,(p,q)}(\Omega)$ if all the functions $\widetilde{u}_j$ are in $H_{0}^{1, (p,1)}(\Omega).$

However, since the subsequence $\widetilde{u}_{k_i}$ is bounded in the non-reflexive space $H^{1,(p,1)}(\Omega),$ it is also bounded via (\ref{relation between the Lpr and the Lps Sobolev norm}) in the reflexive spaces $H^{1,(p,s)}(\Omega)$ for every $1<s<\infty.$ From this, the fact that $\widetilde{u}$ is the weak limit of the subsequence $\widetilde{u}_{k_i}$ in $H^{1,(p,q)}(\Omega),$ from the definition of the spaces $L^{p,s}(\Omega; {\mathbf{R}}^m),$ from Hunt \cite[p.\ 258]{Hun} and from Bennett-Sharpley \cite[Theorem IV.4.7 and Corollary IV.4.8]{BS}, it follows in fact that $\widetilde{u} \in H^{1,(p,s)}(\Omega)$ for every $1<s<\infty.$ Moreover, $\widetilde{u}_{k_i}$ converges weakly in $L^{p,s}(\Omega)$ to $\widetilde{u}$ and $\nabla \widetilde{u}_{k_i}$ converges weakly to $\nabla \widetilde{u}$ in $L^{p,s}(\Omega; {\mathbf{R}}^n)$ for every $1<s<\infty.$

Thus, if we have a bounded sequence $u_k$ in $H^{1,(p,1)}(\Omega)$ that converges weakly in $H^{1,(p,q)}(\Omega)$ to a function $u \in H^{1,(p,q)}(\Omega),$ the above argument shows that
$u$ belongs to $H^{1,(p,s)}(\Omega),$  $u_{k}$ converges weakly in $L^{p,s}(\Omega)$ to $u$ and $\nabla u_{k}$ converges weakly to $\nabla u$ in $L^{p,s}(\Omega; {\mathbf{R}}^n)$ for every $1<s<\infty.$
Moreover, (\ref{weak convergence in Lp1 for u and partial i u}) holds for $u$ and for the sequence $u_k.$
If in addition the sequence $(u_k)_{k \ge 1} \subset H_{0}^{1,(p,1)}(\Omega)$ is bounded in $H_{0}^{1,(p,1)}(\Omega),$ from the previous argument and Theorem \ref{HKM93 Thm131} it follows that the function $u$ is in fact in $H_{0}^{1,(p,s)}(\Omega)$ for all $1<s< \infty.$

From (\ref{weak convergence in Lp1 for u and partial i u}) it follows easily via Fatou's Lemma and via the H\"{o}lder inequality for Lorentz spaces (see \cite[Theorem 2.3]{Cos1} and/or \cite[Theorem 3.7]{Cos4}) that
\begin{eqnarray}
\label{u is a functional on Lpinfty abs cont star}
\left|\int_{\Omega} u(x) \varphi(x) \, dx\right| &\le& \left(\liminf_{k \rightarrow \infty} ||u_k||_{L^{p,1}(\Omega)}\right) \, ||\varphi||_{L^{p',\infty}(\Omega)} \mbox{ and } \\
\label{partial i u is a functional on Lpinfty abs cont star}
\left|\int_{\Omega} \partial_i u(x) \varphi(x) \, dx\right| &\le& \left(\liminf_{k \rightarrow \infty} ||\partial_i u_k||_{L^{p,1}(\Omega)}\right) \, ||\varphi||_{L^{p',\infty}(\Omega)}, i=1, \ldots, n
\end{eqnarray}
for every simple function $\varphi \in L^{p',\infty}(\Omega).$

From (\ref{u is a functional on Lpinfty abs cont star}) and Bennett-Sharpley \cite[Proposition I.3.13, Theorems I.4.1 and IV.4.7]{BS} it follows that $u$ is in $L^{p,1}(\Omega).$ From (\ref{partial i u is a functional on Lpinfty abs cont star}) and Bennett-Sharpley \cite[Proposition I.3.13, Theorems I.4.1 and IV.4.7]{BS} it follows that $\partial_i u$ is in $L^{p,1}(\Omega)$ for $i=\overline{1,n}.$ Thus, $u$ is in $W^{1,(p,1)}(\Omega).$ Since $W^{1,(p,1)}(\Omega)=H^{1, (p,1)}(\Omega)$ (see \cite[Theorem 4.11]{Cos4}), it follows that $u$ is indeed in $H^{1,(p,1)}(\Omega).$

Thus, we finally showed that $u$ is in $H^{1,(p,1)}(\Omega).$

Now we prove that
\begin{eqnarray}
\label{Lp1 norm of u le liminf Lp1 norm of uk}
||u||_{L^{(p,1)}(\Omega)} &\le& \liminf_{k \rightarrow \infty} ||u_k||_{L^{(p,1)}(\Omega)} \mbox{ and } \\
\label{Lp1 norm of partial i u le liminf Lp1 norm of partial i uk}
||\partial_i u||_{L^{(p,1)}(\Omega)} &\le& \liminf_{k \rightarrow \infty} ||\partial_i u_k||_{L^{(p,1)}(\Omega)}, i=1, \ldots, n.
\end{eqnarray}

From (\ref{weak convergence in Lp1 for u and partial i u}) and Stein-Weiss \cite[Lemma V.3.17 (i) and (iii)]{SW} it follows that whenever $0<t \le |\Omega|$ there exist Lebesgue measurable sets $E_t$ and $E_{t,i} \subset \Omega, i=1, \ldots n$ such that $|E_t|=|E_{t,i}|=t, \, i=1, \ldots, n$ and such that

\begin{eqnarray}
\label{u** le liminf uk**}
u^{**}(t)&=&\frac{1}{t} \int_{0}^{t} u^{*}(s) \, ds=\frac{1}{t} \int_{E_t} |u(x)| \, dx \le \liminf_{k \rightarrow \infty} \frac{1}{t} \int_{E_t} |u_k(x)| \, dx \\
         &\le& \liminf_{k \rightarrow \infty} \frac{1}{t} \int_{0}^{t} |u_k|^{*}(s) \, ds = \liminf_{k \rightarrow \infty} u_k^{**}(t) \mbox{ and } \nonumber \\
\label{partial i u** le liminf partial i uk**}
|\partial_i u|^{**}(t)&=&\frac{1}{t} \int_{0}^{t} |\partial_i u|^{*}(s) \, ds=\frac{1}{t} \int_{E_{t,i}} |\partial_i u(x)| \, dx \le \liminf_{k \rightarrow \infty} \frac{1}{t} \int_{E_{t,i}} |\partial_i u_k(x)| \, dx \\
         &\le& \liminf_{k \rightarrow \infty} \frac{1}{t} \int_{0}^{t} |\partial_i u_k|^{*}(s) \, ds =
         \liminf_{k \rightarrow \infty} |\partial_i u_k|^{**}(t), \, i=1, \ldots, n. \nonumber
\end{eqnarray}

Now (\ref{Lp1 norm of u le liminf Lp1 norm of uk}) follows from (\ref{u** le liminf uk**}) and Fatou's Lemma, while (\ref{Lp1 norm of partial i u le liminf Lp1 norm of partial i uk}) follows from (\ref{partial i u** le liminf partial i uk**}) and Fatou's Lemma.

Since we do not know whether the sequence $(u_k, \nabla u_k)$ converges weakly to $(u, \nabla u)$
in $L^{p,1}(\Omega) \times L^{p,1}(\Omega;{\mathbf{R}}^n)$ or not, we cannot use the weak-$\star$ lower semicontinuity of the $L^{p,1}$ quasinorm in order to derive (\ref{H1p1 norm of u le liminf H1p1 norm of uk})-(\ref{Lp1 quasinorm of nabla u le liminf Lp1 quasinorm of nabla uk}). Thus, we have to use a different approach in order to obtain (\ref{H1p1 norm of u le liminf H1p1 norm of uk})-(\ref{Lp1 quasinorm of nabla u le liminf Lp1 quasinorm of nabla uk}).

We can choose subsequences $u_{k,1}, u_{k,2}, u_{k,3}$ and $u_{k,4}$ such that that
\begin{eqnarray}
\label{lim H1p1 norm of uk1 equal liminf H1p1 norm of uk}
\lim_{k \rightarrow \infty} ||u_{k,1}||_{H^{1,(p,1)}(\Omega)}&=&\liminf_{k \rightarrow \infty} ||u_k||_{H^{1,(p,1)}(\Omega)}, \\
\label{lim H1p1 quasinorm of uk2 equal liminf H1p1 quasinorm of uk}
\lim_{k \rightarrow \infty} ||u_{k,2}||_{H^{1,p,1}(\Omega)}&=&\liminf_{k \rightarrow \infty} ||u_k||_{H^{1,p,1}(\Omega)}, \\
\label{lim Lp1 norm of nabla uk3 equal liminf Lp1 norm of nabla uk}
\lim_{k \rightarrow \infty} ||\nabla u_{k,3}||_{L^{(p,1)}(\Omega; {\mathbf{R}}^n)}&=&\liminf_{k \rightarrow \infty} ||\nabla u_k||_{L^{(p,1)}(\Omega; {\mathbf{R}}^n)} \mbox{ and } \\
\label{lim Lp1 quasinorm of nabla uk4 equal liminf Lp1 quasinorm of nabla uk}
\lim_{k \rightarrow \infty} ||\nabla u_{k,4}||_{L^{p,1}(\Omega; {\mathbf{R}}^n)}&=&\liminf_{k \rightarrow \infty} ||\nabla u_k||_{L^{p,1}(\Omega; {\mathbf{R}}^n)}.
\end{eqnarray}

We can apply the Mazur lemma to the sequences $(u_{k,i}, \nabla u_{k,i}),$ $i=1, \ldots, 4$ with respect to the reflexive space $L^{(p,q)}(\Omega) \times L^{(p,q)}(\Omega; \mathbf{R}^n)$ to obtain sequences $v_{k,i}$ of convex combinations of $u_{k,i},$ $i=1, \ldots, 4$ such that $v_{k,i} \rightarrow u$ in $H^{1,(p,q)}(\Omega),$ $v_{k,i} \rightarrow u$ almost everywhere in $\Omega$ and $\nabla v_{k,i} \rightarrow \nabla u$ almost everywhere in $\Omega,$ $i=\overline{1,4}.$

We present here the construction argument for the Mazur lemma.
Let $k_{0} \ge 1$ and $1 \le i \le 4$ be fixed. Since every subsequence of $(u_{k,i}, \nabla u_{k,i})$
converges to $(u, \nabla u)$ weakly in $L^{(p,q)}(\Omega) \times L^{(p,q)}(\Omega; \mathbf{R}^n),$ we may
use the Mazur lemma for the subsequence $u_{k,i}, k \ge k_{0}$ with respect to
$L^{(p,q)}(\Omega) \times L^{(p,q)}(\Omega; \mathbf{R}^n).$ We
obtain a finite convex combination $v_{k_0,i}$ of the functions
$u_{k,i}, k\ge k_{0},$
$$v_{k_0,i}=\sum_{j=k_0}^{j_{k_0,i}} \lambda_{k_0,j,i} u_{j,i}, \quad \lambda_{k_0,j,i} \ge 0 \mbox { and } \sum_{j=k_0}^{j_{k_0,i}} \lambda_{k_0,j,i}=1$$
as close to $u$ as we want in $H^{1,(p,q)}(\Omega)$
(but not necessarily in $H^{1,(p,1)}(\Omega)$).

A finite convex combination of functions from $H^{1, (p,1)}(\Omega)$ is a function from
$H^{1, (p,1)}(\Omega).$ Moreover, for every $i=1, \ldots, 4$ we have $(v_{k,i})_{k \ge 1} \subset H_{0}^{1,(p,1)}(\Omega)$ if $(u_k)_{k \ge 1} \subset H_{0}^{1,(p,1)}(\Omega).$
Passing to subsequences if necessary, we may assume for every $i=1, \ldots, 4$ that $v_{k,i} \rightarrow u$ almost everywhere in $\Omega,$
$\nabla v_{k,i} \rightarrow \nabla u$ almost everywhere in $\Omega$ and that
\begin{equation*}
||v_{k+1,i}-v_{k,i}||_{L^{(p,q)}(\Omega)} + ||\nabla v_{k+1,i}-\nabla v_{k,i}||_{L^{(p,q)}(\Omega; \mathbf{R}^n)} < 2^{-2k}
\end{equation*}
for every $k \ge 1.$

This ends the construction of the sequences $(v_{k,i})_{k \ge 1} \subset H^{1,(p,1)}(\Omega)$, $i=1, \ldots, 4$ by using the Mazur lemma with respect to $L^{(p,q)}(\Omega) \times L^{(p,q)}(\Omega; \mathbf{R}^n).$

Now we finish the prove of claim (i).

\vspace{2mm}

From the convexity of both the $p,1$-norm and the $(p,1)$-norm, the choice of the sequences $u_{k,i},$ and the definition of the functions $v_{k,i},$
$i=1, \ldots, 4$ we obtain
\begin{eqnarray}
\label{liminf H1p1 norm of vk1 le liminf H1p1 norm of uk}
\liminf_{k \rightarrow \infty} ||v_{k,1}||_{H^{1,(p,1)}(\Omega)} &\le& \liminf_{k \rightarrow \infty} ||u_k||_{H^{1,(p,1)}(\Omega)}, \\
\label{liminf H1p1 quasinorm of vk2 le liminf H1p1 quasinorm of uk}
\liminf_{k \rightarrow \infty} ||v_{k,2}||_{H^{1,p,1}(\Omega)} &\le& \liminf_{k \rightarrow \infty} ||u_k||_{H^{1,p,1}(\Omega)}, \\
\label{liminf Lp1 norm of nabla vk3 le liminf Lp1 norm of nabla uk}
\liminf_{k \rightarrow \infty} ||\nabla v_{k,3}||_{L^{(p,1)}(\Omega; {\mathbf{R}}^n)} &\le& \liminf_{k \rightarrow \infty} ||\nabla u_k||_{L^{(p,1)}(\Omega; {\mathbf{R}}^n)} \mbox{ and }\\
\label{liminf Lp1 quasinorm of nabla vk4 le liminf Lp1 quasinorm of nabla uk}
\liminf_{k \rightarrow \infty} ||\nabla v_{k,4}||_{L^{p,1}(\Omega; {\mathbf{R}}^n)} &\le& \liminf_{k \rightarrow \infty} ||\nabla u_k||_{L^{p,1}(\Omega; {\mathbf{R}}^n)}.
\end{eqnarray}

Since the sequences $v_{k,i}, k \ge 1$ converge pointwise almost everywhere on $\Omega$ to $u$ for every $i=1, \ldots, 4,$ it follows via Bennett-Sharpley \cite[Proposition II.1.7]{BS} and via Fatou's Lemma that for every $i=1, \ldots, 4$ we have
\begin{equation}
\label{u* le liminf vki* and u** le liminf vki**}
u^{*}(t) \le \liminf_{k \rightarrow \infty} v_{k,i}^{*}(t) \mbox{ and } u^{**}(t) \le \liminf_{k \rightarrow \infty} v_{k,i}^{**}(t) \mbox{ for every } t>0.
\end{equation}

Similarly, since the sequences $\nabla v_{k,i}, k \ge 1$ converge pointwise almost everywhere on $\Omega$ to $\nabla u$ for every $i=1, \ldots, 4,$ it follows via Bennett-Sharpley \cite[Proposition II.1.7]{BS} and via  Fatou's Lemma that for every $i=1, \ldots, 4$ we have
\begin{equation}
\label{nabla u* le liminf nabla vki* and nabla u** le liminf nabla vki**}
|\nabla u|^{*}(t) \le \liminf_{k \rightarrow \infty} |\nabla v_{k,i}|^{*}(t) \mbox{ and } |\nabla u|^{**}(t) \le \liminf_{k \rightarrow \infty} |\nabla v_{k,i}|^{**}(t) \mbox{ for every } t>0.
\end{equation}

Moreover, from (\ref{liminf H1p1 norm of vk1 le liminf H1p1 norm of uk}), (\ref{liminf H1p1 quasinorm of vk2 le liminf H1p1 quasinorm of uk}), (\ref{liminf Lp1 norm of nabla vk3 le liminf Lp1 norm of nabla uk}) and (\ref{liminf Lp1 quasinorm of nabla vk4 le liminf Lp1 quasinorm of nabla uk}) we have via (\ref{u* le liminf vki* and u** le liminf vki**}) and (\ref{nabla u* le liminf nabla vki* and nabla u** le liminf nabla vki**}) and via Fatou's Lemma
\begin{eqnarray*}
||u||_{H^{1,(p,1)}(\Omega)} &\le& \liminf_{k \rightarrow \infty} ||v_{k,1}||_{H^{1,(p,1)}(\Omega)} \le \liminf_{k \rightarrow \infty} ||u_k||_{H^{1,(p,1)}(\Omega)} \\
||u||_{H^{1,p,1}(\Omega)} &\le& \liminf_{k \rightarrow \infty} ||v_{k,2}||_{H^{1,p,1}(\Omega)} \le \liminf_{k \rightarrow \infty} ||u_k||_{H^{1,p,1}(\Omega)} \\
||\nabla u||_{L^{(p,1)}(\Omega; {\mathbf{R}}^n)} &\le& \liminf_{k \rightarrow \infty} ||\nabla v_{k,3}||_{L^{(p,1)}(\Omega; {\mathbf{R}}^n)} \le
\liminf_{k \rightarrow \infty} ||\nabla u_k||_{L^{(p,1)}(\Omega; {\mathbf{R}}^n)} \mbox{ and } \\
||\nabla u||_{L^{p,1}(\Omega; {\mathbf{R}}^n)} &\le& \liminf_{k \rightarrow \infty} ||\nabla v_{k,4}||_{L^{p,1}(\Omega; {\mathbf{R}}^n)} \le \liminf_{k \rightarrow \infty} ||\nabla u_k||_{L^{p,1}(\Omega; {\mathbf{R}}^n)}.
\end{eqnarray*}

This finishes the proof of claim (i).

\vspace{2mm}

(ii) Now we prove the second claim of the theorem. Before we start the proof of claim (ii), we recall that in part (i) we proved that if we have a sequence $(u_k)_{k \ge 1} \subset H_{0}^{1,(p,1)}(\Omega)$ that is bounded in $H^{1,(p,1)}(\Omega)$ such that $u_k$ converges to $u$ weakly in $L^{(p,q)}(\Omega)$ and such that $\nabla u_k$ converges to $\nabla u$ weakly in $L^{(p,q)}(\Omega; {\mathbf{R}}^n)$ for some $q$ in $(1, \infty),$ then $u$ in $H_{0}^{1,(p,s)}(\Omega)$ whenever $1<s<\infty.$ Moreover, we also proved in part (i) that the sequence $u_k$ converges to $u$ weakly in $L^{(p,s)}(\Omega)$ and that the sequence $\nabla u_k$ converges to $\nabla u$ weakly in $L^{(p,s)}(\Omega; {\mathbf{R}}^n)$ whenever $1<s<\infty.$ Furthermore, we also proved that $u$ is in $H^{1,(p,1)}(\Omega).$ The result in part (i) is valid whenever
$1<p<\infty.$

Now we show that if the sequence $(u_k)_{k \ge 1} \subset H_{0}^{1,(p,1)}(\Omega)$ is bounded in $H^{1,(p,1)}(\Omega),$  $u_k$ converges to $u$ weakly in $L^{(p,q)}(\Omega)$ and $\nabla u_k$ converges to $\nabla u$ weakly in $L^{(p,q)}(\Omega; {\mathbf{R}}^n)$ for some $q$ in $(1, \infty),$ then $u \in H_{0}^{1, (p,1)}(\Omega)$ provided that $1 \le n<p<\infty$ or $1<n=p<\infty.$

Under the hypotheses of claim (ii) we can assume without loss of generality via the discussion at the beginning of the proof of this theorem together with our previous results \cite[Theorem 5.5 (iii)]{Cos4} when $1=n<p<\infty$ and respectively \cite[Theorem 5.6 (iv)]{Cos4} when $1<n<p<\infty$ that $u$ and all
the functions $u_k$ and $v_{k,i},$ $k \ge 1, i=1, \ldots, 4$ are H\"{o}lder continuous with exponent $1-\frac{n}{p}$ on the closed set $\overline{\Omega}.$ Moreover, if $\partial \Omega \neq \emptyset,$ all these functions are $0$ on $\partial \Omega.$ Furthermore, from the construction of the sequences $v_{k,i},$ from our previous results \cite[Theorem 5.5 (iii)]{Cos4} when $1=n<p<\infty$ and respectively \cite[Theorem 5.6 (iv)]{Cos4} when $1<n<p<\infty,$ it follows that the sequences $v_{k,i}$ converge uniformly to $u$ on compact subsets of $\Omega$ (or respectively uniformly on $\overline{\Omega}$ if $\Omega$ is bounded).

\smallskip

When $1<n=p<\infty,$ we can assume without loss of generality via Theorem \ref{continuous version for functions in H1n1} that $u$ is continuous on $\Omega.$ Moreover, when $1<n=p<\infty$ we can also assume without loss of generality via Theorem \ref{continuous embedding of H01n1 into C cap Linfty} that all the functions $u_k$ and $v_{k,i}$ are continuous on $\overline{\Omega}$ and in addition, if $\partial \Omega \neq \emptyset,$ all of them are $0$ on $\partial \Omega.$

We notice that claim (ii) holds trivially for all $p$ in $(1,\infty)$ when $\Omega={\mathbf{R}}^n$ via our previous result \cite[Theorem 4.12]{Cos4} or when $u$ is compactly supported in $\Omega$ via our previous result \cite[Lemma 4.21]{Cos4}.

\smallskip

We need to prove claim (ii) when $\Omega \neq {\mathbf{R}}^n.$ We have to consider two separate cases, $\Omega$ bounded and $\Omega \neq {\mathbf{R}}^n$ unbounded.

Before we differentiate between the cases $\Omega$ bounded and $\Omega \neq {\mathbf{R}}^n$ unbounded, we extend the functions $u_k$, $v_{k,i}$ and $u$ by zero on the nonempty set ${\mathbf{R}}^n \setminus \Omega$ and we denote these extensions by $\widetilde{u}_{k},$ $\widetilde{v}_{k,i}$ and $\widetilde{u}$ respectively. From the discussion at the beginning of the proof of claim (ii) it follows that all the functions $\widetilde{u}_{k}$ and $\widetilde{v}_{k,i}$ are continuous on ${\mathbf{R}}^n.$ From our previous result \cite[Proposition 5.2]{Cos4} and from the hypothesis of claim (ii), we see that the sequence $\widetilde{u}_k$ is bounded in $H_{0}^{1,(p,1)}({\mathbf{R}}^n),$ $\widetilde{u}_k$ converges weakly to $\widetilde{u}$ in $L^{(p,q)}({\mathbf{R}}^n)$ and $\nabla \widetilde{u}_{k}$ converges weakly to $\nabla \widetilde{u}$ in $L^{(p,q)}({\mathbf{R}}^n; {\mathbf{R}}^n).$ Thus, via claim (i) it follows that $\widetilde{u} \in H_{0}^{1,(p,1)}({\mathbf{R}}^n).$ This implies via \cite[Theorem 5.5 (iii)]{Cos4} when $1=n<p<\infty,$ via \cite[Theorem 5.6 (iv)]{Cos4} when $1<n<p<\infty$ and respectively via Theorem \ref{continuous version for functions in H1n1} when $1<n=p<\infty$ that $\widetilde{u}$ has a version $\widetilde{u}^{*} \in C({\mathbf{R}}^n).$ Since $\widetilde{u}^{*}$ is continuous on ${\mathbf{R}}^n,$
$u$ is continuous on $\Omega$ and the restriction of $\widetilde{u}^{*}$ to $\Omega$ is a version of $u,$
it follows immediately that $u=\widetilde{u}^{*}=\widetilde{u}$ everywhere in $\Omega.$

Now we consider the cases $\Omega$ bounded and $\Omega \neq {\mathbf{R}}^n$ unbounded separately.

\smallskip

Case 1. We start with the case when $\Omega$ is bounded.

We saw already that $u=\widetilde{u}^{*}=\widetilde{u}$ everywhere in $\Omega.$ Since $\widetilde{u}$ is zero everywhere on ${\mathbf{R}}^n \setminus \Omega \supset \partial \Omega$ and its version $\widetilde{u}^{*}$ is in $C_{0}({{\mathbf{R}}^n}),$ it follows in fact that $\widetilde{u}^{*}$ is zero everywhere in ${\mathbf{R}}^n \setminus \Omega \supset \partial \Omega$ along with $\widetilde{u}.$ Thus, $\widetilde{u}=\widetilde{u}^{*}$ everywhere on ${\mathbf{R}}^n,$ both of them are zero on $\partial \Omega$ (along with $u$) and $\widetilde{u}^{*}=\widetilde{u}=u$ in $\Omega.$ Thus, we proved that $u$ is a continuous function on $\Omega$ that extends continuously by $0$ on $\partial \Omega.$ This implies via our previous result \cite[Lemma 4.21]{Cos4} that $u \in H_{0}^{1,(p,1)}(\Omega).$ Thus, claim (ii) holds when $\Omega$ is bounded provided that $1 \le n<p<\infty$ or $1<n=p<\infty.$

\smallskip

Case 2. We consider now the case when $\Omega \neq {\mathbf{R}}^n$ is unbounded.

Without loss of generality we can assume that $0 \in \Omega \neq {\mathbf{R}}^n.$ Like in the proof of \cite[Theorem 4.12]{Cos4}, we choose a sequence of 2-Lipschitz smooth functions $(\phi_j)_{j \ge 1} \subset C_{0}^{\infty}({\mathbf{R}}^n)$ such that $0 \le \phi_j \le 1,$ $\phi_j=1$ on $B(0,j)$ and such that $\phi_j$ is compactly supported in $B(0, j+1)$ for every $j \ge 1.$
We recall that in the discussion before the proof of Case 1, we extended the functions $u_k,$ $v_{k,i}$ and $u$ by zero on ${\mathbf{R}}^n \setminus \Omega$ and we denoted these extensions by $\widetilde{u}_k,$ $\widetilde{v}_{k,i}$ and $\widetilde{u}$ respectively. We noticed then that all the functions $\widetilde{u}_{k}$ and $\widetilde{v}_{k,i}$ are continuous on ${\mathbf{R}}^n.$

Let $j \ge 1$ be a fixed integer and let $s \ge 1$ be a finite number. Let $\Omega_j:= \Omega \cap B(0,j+1).$ Via Lemma \ref{Product Rule for W1pq} and
Remark \ref{boundedness product rule for H1pq and W1pq} (see also \cite[Lemma 4.9 and Theorem 4.11]{Cos4}) we have that $w \phi_j$ is in $H_0^{1,(p,s)}(\Omega_j)$ whenever $w$ is in $H_0^{1,(p,s)}(\Omega)$ with
$$||w \phi_j||_{H^{1,(p,s)}(\Omega_j)} \le 3 ||w||_{H^{1,(p,s)}(\Omega)}$$
for all $w \in H_{0}^{1,(p,s)}(\Omega).$

Thus, via Lemma \ref{Product Rule for W1pq} and Remark \ref{boundedness product rule for H1pq and W1pq}
(see also \cite[Lemma 4.9 and Theorem 4.11]{Cos4}), we see that the sequence $u_k \phi_j$ is bounded in $H_{0}^{1,(p,1)}(\Omega_j)$ since $\phi_j \in C_{0}^{\infty}({\mathbf{R}}^n)$ is compactly supported in $B(0,j+1)$ and since the sequence $u_k$ is bounded in $H_{0}^{1,(p,1)}(\Omega).$

It is also easy to see via \cite[Lemma 4.9]{Cos4} that we have $u_k \phi_j \rightarrow u \phi_j$ weakly in $L^{(p,q)}(\Omega_j)$ and $\nabla (u_k \phi_j) \rightarrow \nabla (u \phi_j)$ weakly in $L^{(p,q)}(\Omega_j; {\mathbf{R}}^n)$ since $\phi_j \in C_{0}^{\infty}({\mathbf{R}}^n)$ is compactly supported in $B(0,j+1),$ $u_k \rightarrow u$ weakly in $L^{(p,q)}(\Omega)$ and since $\nabla u_k \rightarrow \nabla u$ weakly in $L^{(p,q)}(\Omega; {\mathbf{R}}^n).$ By applying Case 1 to the sequence $(u_k \phi_j)_{k \ge 1}$ with respect to the bounded open set $\Omega_j,$ we see that $u \phi_j \in H_{0}^{1, (p,1)}(\Omega_j).$ Thus, $u \phi_j \in H_{0}^{1,(p,1)}(\Omega_j) \subset H_{0}^{1,(p,1)}(\Omega)$ for every $j \ge 1$ integer.

By doing a computation similar to the one from the proof of our previous result \cite[Theorem 4.12]{Cos4}, we obtain
\begin{eqnarray*}
||\widetilde{u}- \widetilde{u} \phi_j||_{H^{1,(p,1)}({\mathbf{R}}^n)} &\le&
||\widetilde{u}(1-\phi_j)||_{L^{(p,1)}({\mathbf{R}}^n)}+||\widetilde{u} \nabla \phi_j||_{L^{(p,1)}({\mathbf{R}}^n;{\mathbf{R}}^n)}
 +||(1-\phi_j) \nabla \widetilde{u}||_{L^{(p,1)}({\mathbf{R}}^n;{\mathbf{R}}^n)}\\
&\le& 3 \, ||\widetilde{u} \chi_{{\mathbf{R}}^n \setminus B(0,j)}||_{L^{(p,1)}({\mathbf{R}}^n)}+||\nabla \widetilde{u} \chi_{{\mathbf{R}}^n \setminus B(0,j)}||_{L^{(p,1)}({\mathbf{R}}^n;{\mathbf{R}}^n)}
\rightarrow 0
\end{eqnarray*}
as $j \rightarrow \infty.$ From this, \cite[Proposition 5.2]{Cos4}, the definition of $\widetilde{u}$ and the fact that $\widetilde{u} \phi_j$ is the extension by $0$ on ${\mathbf{R}}^n \setminus \Omega_j$ of $u \phi_j \in H_{0}^{1,(p,1)}(\Omega_j) \subset H_{0}^{1,(p,1)}(\Omega)$ for every $j \ge 1$ integer, it follows that $u \in H_{0}^{1,(p,1)}(\Omega).$ This finishes the proof of the case $\Omega \neq {\mathbf{R}}^n$ unbounded. Thus, we finish proving claim (ii) and the theorem.

\end{proof}

\begin{Remark} \label{Remark on bdd in H01p1 weak limit in H01p1 1<p<n}
When proving this weak convergence result for $H_{0}^{1,(p,1)}(\Omega),$ we relied heavily many times on the fact that we can work with continuous functions from $H_{0}^{1,(p,1)}(\Omega)$ whenever $1 \le n<p<\infty$ or $1<n=p<\infty.$ The existence of discontinuous and/or unbounded functions in $H_{0}^{1,(p,1)}(\Omega)$ when $1<p<n$ leaves as an open question the membership of the limit function $u$ in $H_{0}^{1,(p,1)}(\Omega)$ when $1<p<n,$ $\Omega \subset {\mathbf{R}}^n$ is bounded and $u$ is not compactly supported in $\Omega.$ Thus, we do not know at this point in time whether the weak convergence result concerning $H_{0}^{1,(p,1)}(\Omega)$ can be extended to the case $1<p<n.$
\end{Remark}

The following proposition will be useful in the sequel.

\begin{Proposition} \label{size of the set of pointwise convergence when p=n} Suppose that $1<n,q<\infty,$ where $n$ is an
integer. Let $\Omega \subset {\mathbf{R}}^n$ be an open set. Let $u$ be a function in $C(\Omega)  \cap H_{0}^{1,(n,1)}(\Omega)$ and let $(u_k)_{k \ge 1} \subset C(\Omega) \cap H_{0}^{1,(n,1)}(\Omega)$ be a sequence in $H_{0}^{1,(n,1)}(\Omega)$ such that
$$||u_k-u||_{L^{(n,q)}(\Omega)}+||\nabla u_k-\nabla u||_{L^{(n,q)}(\Omega; {\mathbf{R}^n})} < 2^{-2k}$$
for every $k \ge 1.$ Then there exists a Borel set $F \subset \Omega$ such that ${\mathrm{Cap}}_{n,q}(F)=0$
and such that $u_k \rightarrow u$ pointwise on $\Omega \setminus F.$

\end{Proposition}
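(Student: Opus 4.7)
The plan is to use a standard Borel–Cantelli style argument in capacity, exploiting the continuity of $u_k - u$ to turn the fast decay in the Sobolev–Lorentz norm into a capacity estimate on suitable level sets.

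First, for each $k \ge 1$, I would set $F_k = \{x \in \Omega : |u_k(x) - u(x)| > 2^{-k}\}$. Since $u_k - u \in C(\Omega)$, each $F_k$ is open in $\Omega$, and since $\Omega$ is open in $\mathbf{R}^n$, $F_k$ is open in $\mathbf{R}^n$. The function $v_k := 2^k |u_k - u|$ belongs to $H_{0}^{1,(n,1)}(\Omega) \subset H_{0}^{1,(n,q)}(\Omega)$ (lattice property plus the inclusion between Sobolev–Lorentz spaces from (\ref{relation between the Lpr and the Lps Sobolev norm})), and by \cite[Proposition 5.2]{Cos4} its zero extension, still denoted $v_k$, lies in $H_{0}^{1,(n,q)}(\mathbf{R}^n)$. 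Because $v_k \ge 1$ on the open set $F_k$ containing $F_k$, the function $v_k$ is admissible for $F_k$ in the definition of $\mathrm{Cap}_{n,q}$.

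Next I would use the norm comparison $\|\cdot\|_{L^{n,q}} \le \|\cdot\|_{L^{(n,q)}}$ from Section \ref{section Lorentz spaces} to estimate
\[
\mathrm{Cap}_{n,q}(F_k) \le \|v_k\|_{H^{1,n,q}(\mathbf{R}^n)}^n \le 2^{kn} \bigl( \|u_k - u\|_{L^{(n,q)}(\Omega)} + \|\nabla u_k - \nabla u\|_{L^{(n,q)}(\Omega; \mathbf{R}^n)}\bigr)^n \cdot C(n,q) < C(n,q) \cdot 2^{-kn},
\]
where $C(n,q)$ absorbs the constant from equivalence of the two $(n,q)$-type norms and from combining the components via the $r$-norm with $r = \min(n,q)$. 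Now I set $G_j = \bigcup_{k \ge j} F_k$ and $F = \bigcap_{j \ge 1} G_j = \limsup_k F_k$. Using the countable subadditivity property of $\mathrm{Cap}_{n,q}$ (Theorem \ref{Cap Thm p,q global capacity}(v) when $1 < q \le n$, and Theorem \ref{Cap Thm p,q global capacity}(vi) when $n < q < \infty$), the geometric decay of $\mathrm{Cap}_{n,q}(F_k)$ gives $\mathrm{Cap}_{n,q}(G_j) \to 0$ as $j \to \infty$; by monotonicity (Theorem \ref{Cap Thm p,q global capacity}(i)) this forces $\mathrm{Cap}_{n,q}(F) = 0$.

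Finally, $F$ is a Borel set (a countable intersection of countable unions of open sets). For every $x \in \Omega \setminus F$, there exists $j$ with $x \notin G_j$, i.e., $|u_k(x) - u(x)| \le 2^{-k}$ for all $k \ge j$, so $u_k(x) \to u(x)$. There is no serious obstacle here; the only delicate point is tracking that the admissible function $v_k$, obtained by zero-extension from $H_{0}^{1,(n,q)}(\Omega)$, genuinely sits in $H_{0}^{1,(n,q)}(\mathbf{R}^n)$ so that the global capacity definition applies, and that the correct countable subadditivity estimate is used depending on whether $q \le n$ or $q > n$.
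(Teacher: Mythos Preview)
Your proof is correct and in fact more direct than the paper's. You work with the level sets $F_k=\{|u_k-u|>2^{-k}\}$ of the differences $u_k-u$ themselves, which immediately gives pointwise convergence to $u$ off the $\limsup$ set; since $u$ is continuous by hypothesis, the sets $F_k$ are open and $v_k=2^k|u_k-u|$ is a legitimate admissible function. The paper instead uses the \emph{consecutive} differences $O_k=\{|u_{k+1}-u_k|>2^{-k}\}$; this yields uniform Cauchy convergence on each $\Omega\setminus U_j$ to some auxiliary function $v$, and then a separate identification argument (splitting into $\Omega$ bounded and $\Omega$ unbounded, using Lebesgue points and continuity) is needed to show $v=u$ on $\Omega\setminus F$. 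Your route eliminates that identification step entirely.

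The paper's indirect construction is not gratuitous, however: it is chosen with Theorem~\ref{MCT for the (p,1) and p,1 relative capacities} in mind. There one applies the proposition to a Mazur sequence $v_{k,3}$ with $v_{j,3}\equiv 1$ on $E_k$ for $j\ge k$, and must verify that the exceptional set $F$ misses $E=\bigcup_k E_k$. With the paper's $O_k$ this is automatic, since consecutive $v_{j,3}$ agree on $E_k$; with your $F_k=\{|v_{k,3}-u|>2^{-k}\}$ one would need $u=1$ on $E_k$ to conclude $F_k\cap E_k=\emptyset$, which is exactly what that theorem is trying to prove. So your argument establishes the proposition as stated, but the paper's version carries extra structural information used downstream.
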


\begin{proof} For every $k \ge 1$ let

$$O_{k}=\{ x \in \Omega: |u_{k+1}(x) - u_{k}(x)|>2^{-k} \}  \mbox { and } U_k=\bigcup_{l \ge k} O_l.$$
Since all the functions $u_{j}$ are continuous on $\Omega,$ it follows that $O_{k}$ is in fact an open subset of $\Omega$ for every $k \ge 1.$ For every $k \ge 1,$ the function $w_{k}:=2^{k} |u_{k+1}-u_{k}|$ is admissible for the open set $O_{k}$ with respect to the global $(n,q)$-capacity and we have
\begin{eqnarray*}
{\mathrm{Cap}}_{(n,q)}(O_{k})^{1/n} &\le& ||w_{k}||_{L^{(n,q)}(\Omega)}+||\nabla w_{k}||_{L^{(n,q)}(\Omega; {\mathbf{R}^n})}\\
&=& 2^{k} (||u_{k+1}-u_{k}||_{L^{(n,q)}(\Omega)} + ||\nabla u_{k+1} - \nabla u_{k}||_{L^{(n,q)}(\Omega; {\mathbf{R}^n})})\\
&\le& 2^{k} (||u_{k+1}-u||_{L^{(n,q)}(\Omega)} + ||\nabla u_{k+1} - \nabla u||_{L^{(n,q)}(\Omega; {\mathbf{R}^n})})\\
& & + 2^{k} (||u_{k}-u||_{L^{(n,q)}(\Omega)} + ||\nabla u_{k} - \nabla u||_{L^{(n,q)}(\Omega; {\mathbf{R}^n})})\\
&<& 2^{k} \, (2^{-2(k+1)}+2^{-2k}) <2^{1-k}.
\end{eqnarray*}

The set $U_{k}$ is a countable union of open sets in $\Omega$, hence it an open set in $\Omega$ itself and
$${\mathrm{Cap}}_{n,q}(U_{k})^{1/n} \le {\mathrm{Cap}}_{(n,q)}(U_{k})^{1/n} \le \sum_{j=k}^{\infty} {\mathrm{Cap}}_{(n,q)}(O_{j})^{1/n} \le \sum_{j=k}^{\infty} 2^{1-j}=2^{2-k}.$$
Let $F=\bigcap_{k \ge 1} U_{k}.$ It follows immediately that $F$ is a Borel set and ${\mathrm{Cap}}_{n,q}(F)={\mathrm{Cap}}_{(n,q)}(F)=0.$

Let $v: \Omega \rightarrow {\mathbf{R}}$ be the function
$$v(x)=\left\{ \begin{array}{ll}
\lim_{k \rightarrow \infty} u_{k}(x) & \mbox{ if } x \in \Omega \setminus F\\
0 & \mbox{ if } x \in F.
\end{array}
\right.$$

We notice that $u_{k}$ converges to $v$ pointwise in $\Omega \setminus F$ and uniformly on the sets $\Omega \setminus U_{j}, j \ge 1.$ In particular $v$ is continuous when restricted to the sets $\Omega \setminus U_{j}, j \ge 1.$

We know that $u=v$ almost everywhere in $\Omega$ since the sequence $u_{k}$ converges to $u$ in $H_{0}^{1,(n,q)}(\Omega)$ and to $v$ almost everywhere in $\Omega.$ We claim that $u=v$ on $\Omega \setminus F.$ This would imply that that $v_{k}$ converges to $u$ pointwise in $\Omega \setminus F$ and uniformly on the sets $\Omega \setminus U_{j}, j \ge 1.$

In order to prove that $u=v$ on $\Omega \setminus F,$ it is enough to prove that $u=v$ on $\Omega \setminus U_{j}$ for all $j \ge 1$ since $F=\bigcap_{j \ge 1} U_{j}.$

Let $j \ge 1$ be fixed. We study two separate cases here, depending on whether $\Omega$ is bounded or not.

\smallskip

Case 1. Assume that $\Omega$ is bounded. We can assume without loss of generality via Theorem \ref{continuous embedding of H01n1 into C cap Linfty} that $u$ and the functions $u_k$ are continuous on $\overline{\Omega}$ and $0$ on $\partial \Omega.$ We can also extend $v$ by $0$ on $\partial \Omega.$

Since $u$ is continuous on $\overline{\Omega},$ since $v$ is continuous when restricted to $\Omega \setminus U_{j}$ and since $u=v$ almost everywhere in $\Omega$ we have that $u=v$ pointwise on the open set $\Omega \setminus \overline{U}_{j}$ because all the points in this open set are Lebesgue points for both $u$ and $v.$ We still have to show that $u=v$ on $\partial U_{j}.$ Since the functions $u$ and $v$ agree on $\Omega \setminus \overline{U}_{j}$ and on $\partial \Omega$ and since they are both continuous when restricted to $\Omega \setminus U_{j},$ it follows that they agree on $\partial U_{j}$ as well. Therefore, $u=v$ on $\Omega \setminus U_{j}$ when $\Omega$ is bounded.

\smallskip

Case 2. We assume now that $\Omega$ is unbounded. We can assume without loss of generality that $0 \in \Omega.$ Like in the proof of \cite[Theorem 4.12]{Cos4}, we choose a sequence of 2-Lipschitz smooth functions
$(\phi_m)_{m \ge 1} \subset C_{0}^{\infty}({\mathbf{R}}^n)$ such that $0 \le \phi_m \le 1,$ $\phi_m=1$ on $B(0,m)$ and such that $\phi_m$ is compactly supported in $B(0,m+1)$ for every integer $m \ge 1.$ For a fixed $m \ge 1$ let
$$O_{k,m}=\{ x \in \Omega: |(u_{k+1}\phi_m)(x) - (u_{k}\phi_m)(x)|>2^{-k} \} \mbox { and } U_{k,m}=\bigcup_{l \ge k} O_{l,m}.$$

For every fixed $m \ge 1$ it is easy to see that $u \phi_m \in C(\Omega) \cap H_{0}^{1,(n,1)}(\Omega)$ and that $(u_k \phi_m)_{k \ge 1} \subset C(\Omega) \cap H_{0}^{1,(n,1)}(\Omega).$ Moreover,

\begin{eqnarray*}
||u_k \phi_m-u \phi_m||_{L^{(n,q)}(\Omega)} &\le & ||u_k -u ||_{L^{(n,q)}(\Omega)} \mbox{ and } \\
||\nabla (u_k \phi_m)-\nabla (u \phi_m)||_{L^{(n,q)}(\Omega; {\mathbf{R}}^n)} &\le& ||\nabla u_k-\nabla u||_{L^{(n,q)}(\Omega; {\mathbf{R}}^n)}+ 2 ||u_k-u||_{L^{(n,q)}(\Omega)}
\end{eqnarray*}
for every $k \ge 1.$

From our choice of the sequence of the sequence $(\phi_m)_{m \ge 1}$ it follows that
$$O_k \cap B(0,m) \subset O_{k,m} \subset O_k \cap B(0, m+1) \mbox{ and } U_k \cap B(0,m) \subset U_{k,m} \subset U_k \cap B(0, m+1)$$
for all integers $k, m \ge 1.$

By applying Case 1 to the sequence $(u_k \phi_m)_{k \ge 1}$ and to the bounded sets $U_{j,m} \subset U_j \cap B(0, m+1)$ and $\Omega \cap B(0,m+1),$ we see that $u_k \phi_m \rightarrow u \phi_m$ uniformly on $\Omega \setminus U_{j,m}$ for every $m \ge 1.$ From this, the definition of the functions $\phi_m$ and the fact that $u_k \rightarrow v$ uniformly on $\Omega \setminus U_j,$ it follows that $u \phi_m=v \phi_m$ on $\Omega \setminus U_j$ for every $m \ge 1.$ Thus, $u=v$ on $\Omega \setminus U_j$ when $\Omega$ is unbounded. This finishes the proof of Case 2 and the proof of the proposition.

\end{proof}

\section{Choquet property for the capacities associated to $H_{0}^{1,(p,1)}(\Omega)$}
\label{section Choquet property for the p1 capacities}

In this section we prove that the Sobolev-Lorentz relative and global capacities defined via the $(p,1)$ norm and respectively
via the $p,1$ norm have the Choquet property whenever $1 \le n<p<\infty$ or $1<n=p<\infty.$ We prove that all these set functions satisfy a Monotone Convergence Theorem-type result whenever $1 \le n<p<\infty$ or $1<n=p<\infty.$ See Theorems \ref{Cap Thm (p,q) relative capacity} (v), \ref{Cap Thm p,q relative capacity} (v), \ref{Cap Thm (p,q) global capacity} (iv) and respectively \ref{Cap Thm p,q global capacity} (iv) and the discussions before and after Questions \ref{Question Choquet (p,q) relative capacity}, \ref{Question Choquet p,q relative capacity}, \ref{Question Choquet (p,q) global capacity} and respectively \ref{Question Choquet p,q global capacity}.

We start by showing that the Monotone Convergence Theorem holds for the $(p,1)$ and the $p,1$ relative capacities whenever $1 \le n<p<\infty$ or $1<n=p<\infty.$

\begin{Theorem} \label{MCT for the (p,1) and p,1 relative capacities}
Let $n \ge 1$ be an integer. Suppose that $1 \le n<p<\infty$ or $1<n=p<\infty.$ Let $\Omega \subset {\mathbf{R}}^n$ be bounded and open. Let $E_k$ be an increasing set of subsets in $\Omega$ and let $E=\bigcup_{k=1}^{\infty} E_k.$ Then

\par {{\rm(i)}} $\lim_{k \rightarrow \infty} {\rm{cap}}_{(p,1)}(E_k, \Omega) = {\rm{cap}}_{(p,1)}(E, \Omega)$

\par {{\rm(ii)}} $\lim_{k \rightarrow \infty} {\rm{cap}}_{p,1}(E_k, \Omega) = {\rm{cap}}_{p,1}(E, \Omega).$

\end{Theorem}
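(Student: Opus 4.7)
The inequality $\lim_k \mathrm{cap}_{(p,1)}(E_k, \Omega) \le \mathrm{cap}_{(p,1)}(E, \Omega)$ (and its $p,1$ analogue) is immediate from the monotonicity asserted in Theorems \ref{Cap Thm (p,q) relative capacity}(i) and \ref{Cap Thm p,q relative capacity}(i), so the whole task is the reverse inequality. Assuming $L := \lim_k \mathrm{cap}_{(p,1)}(E_k, \Omega) < \infty$ (otherwise there is nothing to prove), for each $k$ I pick $u_k \in \mathcal{A}(E_k, \Omega)$ with $0 \le u_k \le 1$ and $||\nabla u_k||_{L^{(p,1)}(\Omega; {\mathbf{R}}^n)}^p < \mathrm{cap}_{(p,1)}(E_k, \Omega) + 1/k$. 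The hypothesis $1 \le n < p < \infty$ or $1 < n = p < \infty$ lets me pass to the continuous representative of $u_k$ via \cite[Theorems 5.5(iii), 5.6(iv)]{Cos4} when $n < p$ and via Theorem \ref{continuous embedding of H01n1 into C cap Linfty} when $n = p$; thus each $u_k$ belongs to $C(\overline{\Omega})$, vanishes on $\partial \Omega$ when $\partial \Omega \ne \emptyset$, and $(u_k)$ is bounded in the non-reflexive space $H_0^{1,(p,1)}(\Omega)$.

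Fix any $q \in (1, \infty)$. Combining Theorem \ref{HKM93 Thm131} inside the reflexive space $H_0^{1,(p,q)}(\Omega)$ with Theorem \ref{Bdd in H01p1 weak limit in H01p1}(ii), after extracting a subsequence I obtain $u_k \to u$ weakly in $L^{(p,q)}(\Omega)$ and $\nabla u_k \to \nabla u$ weakly in $L^{(p,q)}(\Omega; {\mathbf{R}}^n)$ for some $u \in H_0^{1,(p,1)}(\Omega)$, while Theorem \ref{Bdd in H01p1 weak limit in H01p1}(i) gives $||\nabla u||_{L^{(p,1)}}^p \le \liminf_k ||\nabla u_k||_{L^{(p,1)}}^p \le L$ and the analogous $p,1$-quasinorm bound. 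I then apply Mazur's lemma exactly as in the proof of Theorem \ref{Bdd in H01p1 weak limit in H01p1}(i) to produce convex combinations $v_k = \sum_{j=k}^{N_k} \lambda_{k,j} u_j$ converging to $u$ strongly in $H^{1,(p,q)}(\Omega)$. Since the $E_k$ are increasing and each $u_j$ is $\ge 1$ on a neighborhood of $E_j \supset E_k$ whenever $j \ge k$, every $v_k$ is continuous, takes values in $[0, 1]$, and is admissible for $(E_k, \Omega)$.

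To close the argument I need a pointwise lower bound $u \ge 1$ on $E$ (up to a set coverable at arbitrarily small cost). When $n < p$ the H\"older embedding promotes the $H^{1,(p,q)}$-convergence to uniform convergence on $\overline{\Omega}$; for any $x \in E$ one has $x \in E_j$ for some $j$, so $v_k(x) \ge 1$ for all $k \ge j$ and hence $u(x) \ge 1$ on every point of $E$. By continuity of $u$, the set $\{u > 1 - \varepsilon\}$ is, for each $\varepsilon \in (0,1)$, an open neighborhood of $E$, so $\min(u/(1-\varepsilon), 1) \in H_0^{1,(p,1)}(\Omega)$ is admissible for $(E, \Omega)$ with $L^{(p,1)}$-gradient norm at most $(1-\varepsilon)^{-1} L^{1/p}$; sending $\varepsilon \to 0$ yields $\mathrm{cap}_{(p,1)}(E, \Omega) \le L$, and the $p,1$ part of (ii) is identical modulo replacing the $(p,1)$-norm by the $p,1$-quasinorm and invoking (\ref{Lp1 quasinorm of nabla u le liminf Lp1 quasinorm of nabla uk}).

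The main obstacle lies in the case $n = p$, where $H^{1,(p,q)}$-convergence does not upgrade to uniform convergence. I would pass to a further subsequence satisfying the rapid-decrease hypothesis of Proposition \ref{size of the set of pointwise convergence when p=n}, obtaining a Borel set $F \subset \Omega$ with $\mathrm{Cap}_{n,q}(F) = 0$ off which $v_k \to u$ pointwise, whence $u \ge 1$ on $E \setminus F$; the previous scaling/truncation argument then delivers only $\mathrm{cap}_{(p,1)}(E \setminus F, \Omega) \le L$. The difficulty is that by Theorem \ref{sharp estimates for the n1 relative capacity of a point} a single point already carries strictly positive $(n,1)$-relative capacity, so the bound $\mathrm{Cap}_{n,q}(F) = 0$ does not furnish admissible cut-offs for $F$ with small $L^{(n,1)}$-gradient norm, and a naive additive correction fails. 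My intended workaround is to use outer regularity (Theorem \ref{Cap Thm (p,q) relative capacity}(iii)) at the level of each $E_k$, replacing $E_k$ by an open set $U_k \supset E_k$ with $\mathrm{cap}_{(p,1)}(U_k, \Omega)$ within $2^{-k}$ of $\mathrm{cap}_{(p,1)}(E_k, \Omega)$ and taking $u_k$ with $u_k \ge 1$ on the open set $U_k$; then $\bigcup_k U_k$ is open, contains $E$, and the pointwise bound $u \ge 1$ holds on $(\bigcup_k U_k) \setminus F$, so since $F$ has empty interior (because every open ball has positive $(n,q)$-capacity) continuity of $u$ transfers the bound to the full open set $\bigcup_k U_k$, completing the admissibility. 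The delicate technical step is precisely this last transfer, where continuity of $u$ on $\Omega$ has to be combined with the density of the complement of $F$ inside the open neighborhood $\bigcup_k U_k$ of $E$.
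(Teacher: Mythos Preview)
For the case $n<p$ your argument is essentially the paper's: pick near-extremal $u_k$, pass to the continuous representative, extract a weak limit via Theorems~\ref{HKM93 Thm131} and~\ref{Bdd in H01p1 weak limit in H01p1}, use Mazur combinations $v_k$ and the H\"older embedding to get uniform convergence, deduce $u=1$ on $E$, and test with $u/(1-\varepsilon)$.

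For $n=p$ you have overlooked the paper's decisive observation and, in trying to go around it, introduced a real gap. The point you missed is this: since $v_j$ is a convex combination of $u_l$ with $l\ge j$, and each $u_l\equiv 1$ on (a neighborhood of) $E_l\supset E_j$, we have $v_j\equiv 1$ on $E_j$; hence $v_{j+1}-v_j\equiv 0$ on $E_j$, so the set $O_j=\{|v_{j+1}-v_j|>2^{-j}\}$ of Proposition~\ref{size of the set of pointwise convergence when p=n} satisfies $O_j\cap E_j=\emptyset$. Consequently $U_k=\bigcup_{j\ge k}O_j$ is disjoint from $E_k$, and $F=\bigcap_k U_k$ is disjoint from $E=\bigcup_k E_k$ outright. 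The pointwise convergence $v_k\to u$ therefore holds on all of $E$, yielding $u=1$ on $E$ with no exceptional set left to remove. No appeal to the size of $F$ in any capacity is needed.

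Your workaround via outer regularity does not close the gap as written. The open sets $U_k\supset E_k$ supplied by Theorem~\ref{Cap Thm (p,q) relative capacity}(iii) are not nested, so for a point $x\in U_m$ you cannot assert $u_j(x)\ge 1$ for $j>m$; the Mazur function $v_k$ (built from $u_l$, $l\ge k$) is only guaranteed to be $\ge 1$ on $\bigcap_{l=k}^{N_k}U_l\supset E_k$, not on $U_k$. You therefore only obtain $u\ge 1$ on $E\setminus F$, not on $(\bigcup_k U_k)\setminus F$, and your continuity/density transfer has nothing open to work with. Note that what you \emph{do} retain---$v_j\equiv 1$ on $E_j$---is exactly the input to the paper's direct argument above, so tracing your own information carefully would have led you there without the detour.
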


\begin{proof} We start by proving claim (i).

Due to the monotonicity of ${\mathrm{cap}}_{(p,1)}(\cdot, \Omega),$ we have obviously
$$L:=\lim_{k \rightarrow \infty} {\rm{cap}}_{(p,1)}(E_k, \Omega)^{1/p} \le {\rm{cap}}_{(p,1)}(E, \Omega)^{1/p}.$$

To prove the opposite inequality, we may assume without loss of generality that $L < \infty.$
Let $\varepsilon \in (0,1)$ be fixed. For every $k \ge 1$ we choose $u_{k} \in
{\mathcal{A}}(E_{k}, \Omega)$ such that $0 \le u_k \le 1$ and
\begin{equation}\label{vv 1}
||\nabla u_k||_{L^{(p,1)}(\Omega; {\mathbf{R}}^n)}
<{\mathrm{cap}}_{(p,1)}(E_k, \Omega)^{1/p}+ \varepsilon
\end{equation}
for every $k \ge 1.$

Via Theorem \ref{continuous embedding of H01n1 into C cap Linfty} when $1<n=p<\infty$, via \cite[Theorem 5.5 (iii)]{Cos4} when $1=n<p<\infty$ or via \cite[Theorem 5.6 (iv)]{Cos4} when $1<n<p<\infty$ we can assume without loss of generality (since $u_k=1$ on an open neighborhood of $E_k$) that $u_k$ is in $C(\overline{\Omega}) \cap H_{0}^{1,(p,1)}(\Omega)$ and zero on $\partial \Omega$ for every $k \ge 1.$

We notice that the sequence $(u_k)_{k \ge 1} \subset C(\overline{\Omega}) \cap H_{0}^{1,(p,1)}(\Omega)$ is bounded in $H_{0}^{1,(p,1)}(\Omega)$ because
the sequence $(u_k, \nabla u_k)_{k \ge 1}$ is bounded in $L^{(p,1)}(\Omega) \times L^{(p,1)}(\Omega; {\mathbf{R}}^n).$

Since $H_{0}^{1,(p,1)}(\Omega) \subset H_{0}^{1,p}(\Omega)$ and the sequence $u_k$ is bounded in $H_{0}^{1,(p,1)}(\Omega),$
it follows that $u_k$ is bounded in the reflexive space $H_{0}^{1,p}(\Omega).$ (See the discussion at the beginning of the proof of Theorem \ref{Bdd in H01p1 weak limit in H01p1}). Thus, via Theorem \ref{HKM93 Thm131}
there exists $u \in H_{0}^{1,p}(\Omega)$ and a subsequence,
which we denote again by $u_k,$ such that $(u_k, \nabla u_k) \rightarrow (u, \nabla u)$
weakly in $L^{p}(\Omega) \times L^{p}(\Omega; \mathbf{R}^n)$ as $k \rightarrow
\infty.$

From Theorem \ref{Bdd in H01p1 weak limit in H01p1} (ii) we can assume that $u$ is in fact in $C(\overline{\Omega}) \cap H_{0}^{1,(p,1)}(\Omega)$ and $u=0$ on $\partial \Omega.$ These assumptions can be made via Theorem \ref{continuous embedding of H01n1 into C cap Linfty} when $1<n=p<\infty$, via \cite[Theorem 5.5 (iii)]{Cos4} when $1=n<p<\infty$ or via \cite[Theorem 5.6 (iv)]{Cos4} when $1<n<p<\infty.$ Moreover, from (\ref{Lp1 norm of nabla u le liminf Lp1 norm of nabla uk}) and (\ref{vv 1}) we also have
$$||\nabla u||_{L^{(p,1)}(\Omega; {\mathbf{R}}^n)} \le \liminf_{k \rightarrow \infty} ||\nabla u_k||_{L^{(p,1)}(\Omega; {\mathbf{R}}^n)} \le L+\varepsilon.$$

We want to show that $u=1$ on $E.$ Let $v_{k,3}$ be the sequence constructed in the proof of Theorem \ref{Bdd in H01p1 weak limit in H01p1} (i) by applying Mazur's Lemma with respect to the sequence $(u_k, \nabla u_k)$ and the space $L^{p}(\Omega) \times L^{p}(\Omega; {\mathbf{R}}^n)$ in order to prove (\ref{Lp1 norm of nabla u le liminf Lp1 norm of nabla uk}).
Since the sets of admissible functions are closed under finite convex combinations and $E_k \nearrow E$ as $k \rightarrow \infty,$ we have that $v_{k,3} \in {\mathcal{A}}(E_k,\Omega)$ for every $k \ge 1.$ In particular, $v_{k,3}=1$ on an open neighborhood of $E_k$ for every $k \ge 1.$

We assume first that $1 \le n<p<\infty.$ By inspecting the proof of Theorem \ref{Bdd in H01p1 weak limit in H01p1} (ii) (the case $\Omega$ bounded), we see that the functions $v_{k,3}$ converge uniformly to $u$ on $\overline{\Omega}$ if $1 \le n<p<\infty.$ Since $v_{j,3}$ is $1$ on $E_k$ whenever $j \ge k \ge 1,$ since the functions $v_{k,3}$ converge uniformly to $u$ on $\overline{\Omega}$ and since $E_k \nearrow E$ as $k \rightarrow \infty,$ it follows that $u=1$ on $E$ when $1 \le n<p<\infty.$ Thus, we proved that $u=1$ on $E$ if $1 \le n<p<\infty.$

Assume now that $1<n=p<\infty.$ By inspecting the proof of Proposition \ref{size of the set of pointwise convergence when p=n}, we see that there exists a Borel set $F \subset \Omega$ such that ${\mathrm{Cap}}_{n}(F)=0$ and such that the sequence $v_{k,3}$ converges to $u$ pointwise on $\overline{\Omega} \setminus F.$ Similarly to the notation from Proposition \ref{size of the set of pointwise convergence when p=n}, $F \subset \Omega$ is defined as
$F:=\cap_{k \ge 1} U_k,$ where $U_k=\cup_{j \ge k} O_j$ and
$$O_k=\{ x \in \Omega: |v_{k+1,3}(x)-v_{k,3}(x)|>2^{-k} \}$$
for every $k \ge 1.$

We see that
$O_{j} \cap E_k= \emptyset$ whenever $j \ge k \ge 1$ because $v_{j,3}=1$ on $E_j \supset E_k$ whenever $j \ge k \ge 1.$
Thus, $U_{k} \cap E_k=\emptyset$ for every $k \ge 1,$ which implies $F \cap E=\emptyset.$ Thus, $v_{k,3}$ converges
to $u$ pointwise on $\overline{\Omega} \setminus F \supset E.$ Since $v_{j,3}=1$ on $E_k$ whenever $j \ge k \ge 1$ and since $E_k \nearrow E$ as $k \rightarrow \infty,$ the pointwise convergence of $v_{k,3}$ to $u$ on $E$ implies that $u=1$ on $E$ when $1<n=p<\infty.$ Thus, we proved that $u=1$ on $E$ if $1<n=p<\infty.$

So far we showed that $u \in C(\overline{\Omega}) \cap H_{0}^{1, (p,1)}(\Omega),$ $u=0$ on $\partial \Omega$ and $u=1$ on $E.$ We see that $\frac{u}{1-\varepsilon} \in {\mathcal{A}}(E, \Omega)$ since $u=1$ on $E.$

Thus, we have
$${\mathrm{cap}}_{(p,1)}(E, \Omega)^{1/p} \le \frac{1}{1-\varepsilon} ||\nabla u||_{L^{(p,1)}(\Omega; {\mathbf{R}}^n)} \le \frac{1}{1-\varepsilon}(L+\varepsilon)$$
for every $\varepsilon \in (0,1).$

By letting $\varepsilon \rightarrow 0,$ we obtain
$${\mathrm{cap}}_{(p,1)}(E, \Omega)^{1/p} \le L=\lim_{k \rightarrow \infty} {\mathrm{cap}}_{(p,1)}(E_k, \Omega)^{1/p} \le {\mathrm{cap}}_{(p,1)}(E,\Omega)^{1/p}.$$
This finishes the proof of the claim (i), namely the case of the $(p,1)$ relative capacity. The proof of claim (ii), namely the case of the $p,1$ relative capacity follows by doing an argument very similar to the argument used in the proof of claim (i). This finishes the proof of the theorem.

\end{proof}

From Theorem \ref{MCT for the (p,1) and p,1 relative capacities} (i) and the discussion before Question \ref{Question Choquet (p,q) relative capacity} it follows that the set function ${\mathrm{cap}}_{(p,1)}(\cdot, \Omega)$ satisfies properties (i), (iv) and (v) of Theorem \ref{Cap Thm (p,q) relative capacity} whenever $1 \le n<p<\infty$ or $1<n=p<\infty.$ Thus, ${\mathrm{cap}}_{(p,1)}(\cdot, \Omega)$ is a Choquet capacity (relative to $\Omega$)
whenever $1 \le n<p<\infty$ or $1<n=p<\infty.$ Like in Theorem \ref{Cap Thm (p,q) relative capacity}, the set $\Omega$ is bounded and open in $\mathbf{R}^n,$ where $n \ge 1$ is an integer. We may invoke an important capacitability theorem of Choquet and state the following result. See Doob \cite[Appendix II]{Doo}.

\begin{Theorem}\label{Choquet (p,1) relative capacity Thm}
Let $\Omega$ be a bounded open set in $\mathbf{R}^n,$ where
$n \ge 1$ is an integer. Suppose that $1 \le n<p<\infty$ or $1<n=p<\infty.$
The set function $E \mapsto {\mathrm{cap}}_{(p,1)}(E, \Omega),$
$E \subset \Omega,$ is a Choquet capacity. In particular, all
Borel subsets (in fact, all analytic) subsets $E$ of
$\Omega$ are capacitable, i.e.
$${\mathrm{cap}}_{(p,1)}(E, \Omega)=\sup \, \{ {\mathrm{cap}}_{(p,1)}(K,
\Omega): K \subset E \mbox{ compact} \}.$$
\end{Theorem}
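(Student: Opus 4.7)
The plan is to verify that $\mathrm{cap}_{(p,1)}(\cdot, \Omega)$ satisfies the three axioms characterizing a Choquet capacity, and then invoke the classical Choquet capacitability theorem as cited in Doob \cite[Appendix II]{Doo}. The three axioms to check are: monotonicity, continuity from above along decreasing sequences of compacta, and continuity from below along arbitrary increasing set sequences.

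First I would observe that monotonicity, i.e. $E_1 \subset E_2 \Rightarrow \mathrm{cap}_{(p,1)}(E_1,\Omega) \le \mathrm{cap}_{(p,1)}(E_2,\Omega)$, is given by Theorem \ref{Cap Thm (p,q) relative capacity}(i), which holds without restriction for all $1<p<\infty$ and $1 \le q \le \infty$. Next, the continuity property along decreasing sequences of compact sets, namely $\mathrm{cap}_{(p,1)}(K,\Omega)=\lim_{i\to\infty} \mathrm{cap}_{(p,1)}(K_i,\Omega)$ when $K_i \searrow K$ in $\Omega$, is given by Theorem \ref{Cap Thm (p,q) relative capacity}(iv), again valid for all admissible $p$ and $q$. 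The third axiom, continuity from below along arbitrary increasing sequences of subsets of $\Omega$, is precisely Theorem \ref{MCT for the (p,1) and p,1 relative capacities}(i), and this is the axiom whose validity forces the hypothesis $1 \le n<p<\infty$ or $1<n=p<\infty$.

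With all three axioms verified under these hypotheses on $n$ and $p$, the set function $E \mapsto \mathrm{cap}_{(p,1)}(E,\Omega)$ is a Choquet capacity on $\Omega$ in the standard sense. Choquet's capacitability theorem then immediately yields that every analytic (in particular every Borel) subset $E$ of $\Omega$ is capacitable, that is,
$$\mathrm{cap}_{(p,1)}(E,\Omega) = \sup\{\mathrm{cap}_{(p,1)}(K,\Omega) : K \subset E \text{ compact}\},$$
which is the desired conclusion.

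There is no real obstacle remaining at this stage: the proof is essentially a one-line invocation once the Monotone Convergence Theorem \ref{MCT for the (p,1) and p,1 relative capacities}(i) is in hand. All the difficulty has been absorbed into that earlier theorem, which in turn relied on Theorem \ref{Bdd in H01p1 weak limit in H01p1}(ii) on weak convergence in the non-reflexive space $H_0^{1,(p,1)}(\Omega)$, together with the continuous embedding into $C(\overline{\Omega}) \cap L^\infty(\Omega)$ provided by Theorem \ref{continuous embedding of H01n1 into C cap Linfty} in the critical case $1<n=p<\infty$ and by the H\"older embeddings from \cite[Theorems 5.5, 5.6]{Cos4} in the supercritical case $1 \le n<p<\infty$. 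Those tools are exactly what restricts the range of $(n,p)$ here; extending the Choquet property to $1<p<n$ would require overcoming the obstruction flagged in Remark \ref{Remark on bdd in H01p1 weak limit in H01p1 1<p<n}.
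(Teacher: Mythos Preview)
Your proposal is correct and matches the paper's own argument essentially verbatim: the paper also verifies properties (i), (iv), and (v) of Theorem \ref{Cap Thm (p,q) relative capacity} (the last via Theorem \ref{MCT for the (p,1) and p,1 relative capacities}(i)) and then invokes Choquet's capacitability theorem from Doob \cite[Appendix II]{Doo}. Your identification of the Monotone Convergence Theorem as the sole step requiring the restriction on $n$ and $p$ is exactly the paper's point of view as well.
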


Similarly, from Theorem \ref{MCT for the (p,1) and p,1 relative capacities} (ii) and the discussion before Question \ref{Question Choquet p,q relative capacity} it follows that the set function ${\mathrm{cap}}_{p,1}(\cdot, \Omega)$ satisfies properties (i), (iv) and (v) of Theorem \ref{Cap Thm p,q relative capacity} whenever $1 \le n<p<\infty$ or $1<n=p<\infty.$ Thus, ${\mathrm{cap}}_{p,1}(\cdot, \Omega)$ is a Choquet capacity (relative to $\Omega$) whenever $1 \le n<p<\infty$ or $1<n=p<\infty.$ Like in Theorem \ref{Cap Thm p,q relative capacity}, the set $\Omega$ is bounded and open in $\mathbf{R}^n,$ where $n \ge 1$ is an integer. We may invoke an important capacitability theorem of Choquet and state the following result. See Doob \cite[Appendix II]{Doo}.

\begin{Theorem}\label{Choquet p,1 relative capacity Thm}
Let $\Omega$ be a bounded open set in $\mathbf{R}^n,$ where
$n \ge 1$ is an integer. Suppose that $1 \le n<p<\infty$ or $1<n=p<\infty.$
The set function $E \mapsto {\mathrm{cap}}_{p,1}(E, \Omega),$
$E \subset \Omega,$ is a Choquet capacity. In particular, all
Borel subsets (in fact, all analytic) subsets $E$ of
$\Omega$ are capacitable, i.e.
$${\mathrm{cap}}_{p,1}(E, \Omega)=\sup \, \{ {\mathrm{cap}}_{p,1}(K,
\Omega): K \subset E \mbox{ compact} \}.$$
\end{Theorem}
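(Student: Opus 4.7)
The plan is to reduce the theorem to an application of Choquet's capacitability theorem, exactly as was already done for $\mathrm{cap}_{(p,1)}(\cdot,\Omega)$ in Theorem \ref{Choquet (p,1) relative capacity Thm}. A set function on subsets of $\Omega$ is a Choquet capacity as soon as it is monotone, continuous from above along decreasing sequences of compact subsets of $\Omega$, and continuous from below along arbitrary increasing sequences of subsets of $\Omega$. So the whole task is to verify these three properties for $\mathrm{cap}_{p,1}(\cdot,\Omega)$ under the range $1\le n<p<\infty$ or $1<n=p<\infty$, and then quote Doob \cite[Appendix II]{Doo}.

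First I would note that monotonicity is given for free by Theorem \ref{Cap Thm p,q relative capacity} (i) (valid for all $1<p<\infty$ and $1\le q\le\infty$), and that continuity from above along decreasing sequences of compact sets is given by Theorem \ref{Cap Thm p,q relative capacity} (iv) (also valid in full generality, in particular for $q=1$). The only nontrivial ingredient missing in the general statement of Theorem \ref{Cap Thm p,q relative capacity} is continuity from below: for the $p,q$ relative capacity this property is only proved there in the range $1<q\le p$ (see (v) of that theorem), which excludes $q=1$.

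This is exactly the gap bridged by the Monotone Convergence Theorem we have just proved. Specifically, Theorem \ref{MCT for the (p,1) and p,1 relative capacities} (ii) states that, under the hypothesis $1\le n<p<\infty$ or $1<n=p<\infty$, if $E_k\nearrow E$ in $\Omega$ then $\mathrm{cap}_{p,1}(E_k,\Omega)\to \mathrm{cap}_{p,1}(E,\Omega)$. Combining this with monotonicity and the continuity from above on compact decreasing sequences gives all three properties required by Choquet's theorem.

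Having checked the three axioms, I would then invoke the capacitability theorem of Choquet (in the form cited from Doob \cite[Appendix II]{Doo}) to conclude that $\mathrm{cap}_{p,1}(\cdot,\Omega)$ is a Choquet capacity relative to $\Omega$, and in particular that every analytic (and hence every Borel) subset $E$ of $\Omega$ is capacitable, i.e.
\[
\mathrm{cap}_{p,1}(E,\Omega)=\sup\{\mathrm{cap}_{p,1}(K,\Omega):K\subset E\text{ compact}\}.
\]
No genuine obstacle remains at this stage: all the analytical difficulty (namely constructing, for a given increasing sequence of admissible functions, a limit that is itself admissible for the union and whose gradient controls the capacity from below) has already been absorbed into the proof of Theorem \ref{MCT for the (p,1) and p,1 relative capacities}, which in turn relied on Theorem \ref{Bdd in H01p1 weak limit in H01p1} (ii) and on the continuity of admissible functions in $H_{0}^{1,(p,1)}(\Omega)$ guaranteed by \cite[Theorem 5.5 (iii), Theorem 5.6 (iv)]{Cos4} and Theorem \ref{continuous embedding of H01n1 into C cap Linfty}.
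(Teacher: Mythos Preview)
Your proposal is correct and follows essentially the same approach as the paper: the paper argues that properties (i), (iv), and (v) of Theorem \ref{Cap Thm p,q relative capacity} hold (with (v) supplied by Theorem \ref{MCT for the (p,1) and p,1 relative capacities} (ii) for $q=1$), and then invokes Choquet's capacitability theorem via Doob \cite[Appendix II]{Doo}. Your write-up is slightly more explicit in separating the three axioms, but the argument is identical in substance.
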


Now we prove that the Monotone Convergence Theorem holds for the $(p,1)$ and the $p,1$ global capacities whenever $1 \le n<p<\infty$ or $1<n=p<\infty.$

\begin{Theorem} \label{MCT for the (p,1) and p,1 global capacities}
Let $n \ge 1$ be an integer. Suppose that $1 \le n<p<\infty$ or $1<n=p<\infty.$ Let $E_k$ be an increasing set of subsets in ${\mathbf{R}}^n$ and let $E=\bigcup_{k=1}^{\infty} E_k.$ Then

\par{\rm(i)} $\lim_{k \rightarrow \infty} {\rm{Cap}}_{(p,1)}(E_k) ={\rm{Cap}}_{(p,1)}(E)$

\par{\rm(ii)} $\lim_{k \rightarrow \infty} {\rm{Cap}}_{p,1}(E_k) ={\rm{Cap}}_{p,1}(E).$

\end{Theorem}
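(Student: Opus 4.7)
The plan is to mirror closely the argument used in Theorem \ref{MCT for the (p,1) and p,1 relative capacities}, replacing the relative $(p,1)$-capacity with the global one and the bounded domain $\Omega$ with $\mathbf{R}^n$. I focus on claim (i); claim (ii) is obtained from the same argument with the $p,1$-quasinorm replacing the $(p,1)$-norm and invoking Theorem \ref{Bdd in H01p1 weak limit in H01p1} (ii) together with (\ref{H1p1 quasinorm of u le liminf H1p1 quasinorm of uk}) in place of (\ref{H1p1 norm of u le liminf H1p1 norm of uk}).

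First, monotonicity (Theorem \ref{Cap Thm (p,q) global capacity} (i)) gives immediately
$$L := \lim_{k \to \infty} \mathrm{Cap}_{(p,1)}(E_k)^{1/p} \le \mathrm{Cap}_{(p,1)}(E)^{1/p},$$
so we may assume $L<\infty$. Fix $\varepsilon\in(0,1)$ and choose $u_k\in S(E_k)$ with $0\le u_k\le 1$ and
$$\|u_k\|_{H^{1,(p,1)}(\mathbf{R}^n)} < \mathrm{Cap}_{(p,1)}(E_k)^{1/p} + \varepsilon.$$
Using Theorem \ref{continuous embedding of H01n1 into C cap Linfty} in the case $1<n=p<\infty$ and \cite[Theorem 5.5 (iii)]{Cos4} or \cite[Theorem 5.6 (iv)]{Cos4} in the case $1\le n<p<\infty$, we may replace each $u_k$ by its continuous version, so $u_k\in C(\mathbf{R}^n)\cap H_0^{1,(p,1)}(\mathbf{R}^n)$ and $u_k=1$ on an open neighborhood of $E_k$. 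The sequence $(u_k)$ is bounded in $H_0^{1,(p,1)}(\mathbf{R}^n)$, hence bounded in the reflexive $H_0^{1,p}(\mathbf{R}^n)$, so by Theorem \ref{HKM93 Thm131} a subsequence (still denoted $u_k$) satisfies $(u_k,\nabla u_k)\to(u,\nabla u)$ weakly in $L^p(\mathbf{R}^n)\times L^p(\mathbf{R}^n;\mathbf{R}^n)$ for some $u\in H_0^{1,p}(\mathbf{R}^n)$.

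Applying Theorem \ref{Bdd in H01p1 weak limit in H01p1} (ii) (with $\Omega=\mathbf{R}^n$, which is covered by its proof), we obtain $u\in H_0^{1,(p,1)}(\mathbf{R}^n)$, and by (\ref{H1p1 norm of u le liminf H1p1 norm of uk})
$$\|u\|_{H^{1,(p,1)}(\mathbf{R}^n)} \le \liminf_{k\to\infty}\|u_k\|_{H^{1,(p,1)}(\mathbf{R}^n)} \le L+\varepsilon.$$
We may further pass to the continuous representative of $u$ via the same embedding results used above. The core task is then to show that $u=1$ on $E$. For this, apply Mazur's lemma to $(u_k,\nabla u_k)$ in $L^p(\mathbf{R}^n)\times L^p(\mathbf{R}^n;\mathbf{R}^n)$ (as was done to construct the sequence $v_{k,1}$ in the proof of Theorem \ref{Bdd in H01p1 weak limit in H01p1}) to obtain finite convex combinations $v_k=\sum_{j\ge k}\lambda_{k,j}u_j$ with $v_k\to u$ in $H^{1,p}(\mathbf{R}^n)$ and pointwise a.e. Because $(E_k)$ is increasing, $S(E_m)\subset S(E_k)$ for $m\ge k$; since $S(E_k)$ is convex we get $v_k\in S(E_k)$, in particular $v_k=1$ on $E_k$ for every $k$.

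To upgrade the a.e. equality $u=1$ on $E$ to a pointwise one, I distinguish the two ranges of $p$ exactly as in Theorem \ref{MCT for the (p,1) and p,1 relative capacities}. When $1\le n<p<\infty$, by the Hölder embeddings \cite[Theorems 5.5 (iii), 5.6 (iv)]{Cos4} the convex combinations $v_k$ converge uniformly on compact subsets of $\mathbf{R}^n$ to $u$; since each singleton is compact and since for every $x\in E$ one has $x\in E_{k_0}\subset E_k$ for all $k\ge k_0$, we conclude $u(x)=\lim_k v_k(x)=1$. When $1<n=p<\infty$, I apply Proposition \ref{size of the set of pointwise convergence when p=n} (passing to a further subsequence so the $2^{-2k}$ convergence rate holds, using any $q\in(1,\infty)$): $v_k\to u$ pointwise off a Borel set $F$ with $\mathrm{Cap}_{n,q}(F)=0$, where $F=\bigcap_k U_k$ and $U_k=\bigcup_{j\ge k}\{|v_{j+1}-v_j|>2^{-j}\}$. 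Since $v_j=1$ on $E_j\supset E_k$ for $j\ge k$, the set $U_k$ is disjoint from $E_k$, hence $F\cap E=\emptyset$, and pointwise convergence yields $u=1$ on $E$.

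Consequently $\frac{u}{1-\varepsilon}\in S(E)$, so
$$\mathrm{Cap}_{(p,1)}(E)^{1/p}\le\Big\|\tfrac{u}{1-\varepsilon}\Big\|_{H^{1,(p,1)}(\mathbf{R}^n)}\le\frac{L+\varepsilon}{1-\varepsilon},$$
and letting $\varepsilon\to 0$ finishes claim (i). The main obstacle is the step $u=1$ on $E$: unlike the relative case the ambient domain $\mathbf{R}^n$ is unbounded, so uniform convergence on $\overline{\Omega}$ must be replaced by uniform convergence on compact sets (handled by working pointwise) in the subcritical range, and by quasi-everywhere convergence together with the capacity-zero exceptional set avoiding $E$ in the critical range $n=p$.
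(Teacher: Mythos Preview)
Your proof is correct and follows essentially the same approach as the paper's own argument: near-optimal admissible functions, weak compactness in $H_{0}^{1,p}(\mathbf{R}^n)$, Theorem \ref{Bdd in H01p1 weak limit in H01p1} (ii) together with (\ref{H1p1 norm of u le liminf H1p1 norm of uk}), Mazur's lemma to build the $v_{k,1}$, and then the split into $1\le n<p$ (uniform convergence on compacta) versus $1<n=p$ (Proposition \ref{size of the set of pointwise convergence when p=n} and the observation $F\cap E=\emptyset$). Your explicit remark about passing to a subsequence to secure the $2^{-2k}$ rate needed in Proposition \ref{size of the set of pointwise convergence when p=n} is a detail the paper leaves implicit but is indeed required.
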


\begin{proof} We prove the claim in the case of the global $(p,1)$-capacity.

Due to the monotonicity of ${\mathrm{Cap}}_{(p,1)}(\cdot),$ we have obviously
$$L:=\lim_{k \rightarrow \infty} {\rm{Cap}}_{(p,1)}(E_k)^{1/p} \le {\rm{Cap}}_{(p,1)}(E)^{1/p}.$$

To prove the opposite inequality, we may assume without loss of
generality that $L < \infty.$

Let $\varepsilon \in (0,1)$ be fixed. For every $k \ge 1$ we choose $u_k \in
{\mathcal{A}}(E_k)$ such that $0 \le u_k \le 1$ and

\begin{equation}\label{vv 2}
||u_k||_{H^{1,(p,1)}({\mathbf{R}}^n)} <{\mathrm{Cap}}_{(p,1)}(E_k)^{1/p}+ \varepsilon
\end{equation}
for every $k \ge 1.$

Via Theorem \ref{continuous embedding of H01n1 into C cap Linfty} when $1<n=p<\infty,$ via \cite[Theorem 5.5 (iii)]{Cos4} when $1=n<p<\infty$ or via \cite[Theorem 5.6 (iv)]{Cos4} when $1<n<p<\infty$ we can assume without loss of generality (since $u_k=1$ on an open neighborhood of $E_k$) that $u_k$ is in $C({\mathbf{R}}^n) \cap H_{0}^{1,(p,1)}({\mathbf{R}}^n)$ for every $k \ge 1.$

We notice that the sequence $(u_k)_{k \ge 1} \subset C({\mathbf{R}}^n) \cap H_{0}^{1,(p,1)}({\mathbf{R}}^n)$ is bounded in $H_{0}^{1,(p,1)}({\mathbf{R}}^n)$ because
the sequence $(u_k, \nabla u_k)_{k \ge 1}$ is bounded in $L^{(p,1)}({\mathbf{R}}^n) \times L^{(p,1)}({\mathbf{R}}^n; {\mathbf{R}}^n).$

Since $H_{0}^{1,(p,1)}({\mathbf{R}}^n) \subset H_{0}^{1,p}({\mathbf{R}}^n)$ and the sequence $u_k$ is bounded in $H_{0}^{1,(p,1)}({\mathbf{R}}^n),$
it follows that $u_k$ is bounded in the reflexive space $H_{0}^{1,p}({\mathbf{R}}^n).$ (See the discussion at the beginning of the proof of Theorem \ref{Bdd in H01p1 weak limit in H01p1}). Thus, via Theorem \ref{HKM93 Thm131}
there exists $u \in H_{0}^{1,p}({\mathbf{R}}^n)$ and a subsequence,
which we denote again by $u_k,$ such that $(u_k, \nabla u_k) \rightarrow (u, \nabla u)$
weakly in $L^{p}({\mathbf{R}}^n) \times L^{p}({\mathbf{R}}^n; {\mathbf{R}}^n)$ as $k \rightarrow
\infty.$

From Theorem \ref{Bdd in H01p1 weak limit in H01p1} (ii) we can assume that $u$ is in fact in $C({\mathbf{R}}^n) \cap H_{0}^{1,(p,1)}({\mathbf{R}}^n).$ These assumptions can be made via Theorem \ref{continuous embedding of H01n1 into C cap Linfty} when $1<n=p<\infty$, via \cite[Theorem 5.5 (iii)]{Cos4} when $1=n<p<\infty$ or via \cite[Theorem 5.6 (iv)]{Cos4} when $1<n<p<\infty.$ Moreover, from (\ref{H1p1 norm of u le liminf H1p1 norm of uk}) and (\ref{vv 2}) we also have
$$||u||_{H^{1,(p,1)}({\mathbf{R}}^n)} \le \liminf_{k \rightarrow \infty} ||u_k||_{H^{1,(p,1)}({\mathbf{R}}^n)} \le L+\varepsilon.
$$

We want to show that $u=1$ on $E.$ Let $v_{k,1}$ be the sequence constructed in the proof of Theorem \ref{Bdd in H01p1 weak limit in H01p1} (i) by applying Mazur's Lemma with respect to the sequence $(u_k, \nabla u_k)$ and the space $L^{p}({\mathbf{R}}^n) \times L^{p}({\mathbf{R}}^n; {\mathbf{R}}^n)$ in order to prove (\ref{H1p1 norm of u le liminf H1p1 norm of uk}).
Since the sets of admissible functions are closed under finite convex combinations and $E_k \nearrow E$ as $k \rightarrow \infty,$ we have that $v_{k,1} \in {\mathcal{A}}(E_k)$ for every $k \ge 1.$ In particular, $v_{k,1}=1$ on an open neighborhood of $E_k$ for every $k \ge 1.$

We assume first that $1 \le n<p<\infty.$ By inspecting the proof of Theorem \ref{Bdd in H01p1 weak limit in H01p1} (ii) (the case $\Omega={\mathbf{R}}^n$), we see that the functions $v_{k,1}$ converge uniformly to $u$ on compact subsets of ${\mathbf{R}}^n$ if $1 \le n<p<\infty.$ Since $v_{j,1}$ is $1$ on $E_k$ whenever $j \ge k \ge 1,$ since the functions $v_{k,1}$ converge uniformly to $u$ on compact subsets of ${\mathbf{R}}^n$ and since $E_k \nearrow E$ as $k \rightarrow \infty,$ it follows that $u=1$ on $E$ when $1 \le n<p<\infty.$ Thus, we proved that $u=1$ on $E$ if $1 \le n<p<\infty.$

Assume now that $1<n=p<\infty.$ By inspecting the proof of Proposition \ref{size of the set of pointwise convergence when p=n}, we see that there exists a Borel set $F \subset {\mathbf{R}}^n$ such that ${\mathrm{Cap}}_{n}(F)=0$ and such that the sequence $v_{k,1}$ converge to $u$ pointwise on ${\mathbf{R}}^n \setminus F.$ Similarly to the notation from Proposition \ref{size of the set of pointwise convergence when p=n}, $F \subset {\mathbf{R}}^n$ is defined as $F:=\cap_{k \ge 1} U_k,$ where $U_k=\cup_{j \ge k} O_j$ and $O_k=\{ x \in {\mathbf{R}}^n: |v_{k+1,1}(x)-v_{k,1}(x)|>2^{-k} \}.$

We see that $O_{j} \cap E_k= \emptyset$ whenever $j \ge k \ge 1$ because $v_{j,1}=1$ on $E_j \supset E_k$ whenever $j \ge k \ge 1.$
Thus, $U_{k} \cap E_k=\emptyset$ for every $k \ge 1,$ which implies $F \cap E=\emptyset.$ Thus, $v_{k,1}$ converges
to $u$ pointwise on ${\mathbf{R}}^n \setminus F \supset E.$ Since $v_{j,1}=1$ on $E_k$ whenever $j \ge k \ge 1$ and since $E_k \nearrow E$ as $k \rightarrow \infty,$ the pointwise convergence of $v_{k,1}$ to $u$ on $E$ implies that $u=1$ on $E$ when $1<n=p<\infty.$ Thus, we proved that $u=1$ on $E$ if $1<n=p<\infty.$

So far we showed that $u \in C({\mathbf{R}}^n) \cap H_{0}^{1, (p,1)}({\mathbf{R}}^n)$ and $u=1$ on $E.$ We see that $\frac{u}{1-\varepsilon} \in {\mathcal{A}}(E)$ since $u=1$ on $E.$

Thus, we have
$${\mathrm{Cap}}_{(p,1)}(E)^{1/p} \le \frac{1}{1-\varepsilon} ||u||_{H^{1, (p,1)}(\Omega)} \le \frac{1}{1-\varepsilon}(L+\varepsilon)$$
for every $\varepsilon \in (0,1).$

By letting $\varepsilon \rightarrow 0,$ we obtain
$${\mathrm{Cap}}_{(p,1)}(E)^{1/p} \le L=\lim_{k \rightarrow \infty} {\mathrm{Cap}}_{(p,1)}(E_k)^{1/p} \le {\mathrm{Cap}}_{(p,1)}(E)^{1/p}.$$

This finishes the proof of claim (i), namely the case of the $(p,1)$ global capacity. The proof of claim (ii), namely the case of the global $p,1$-capacity follows by doing an argument very similar to the argument used in the proof of claim (i). This finishes the proof of the theorem.

\end{proof}

From Theorem \ref{MCT for the (p,1) and p,1 global capacities} (i) and the discussion before Question \ref{Question Choquet (p,q) global capacity} it follows that the set function ${\mathrm{Cap}}_{(p,1)}(\cdot)$ satisfies properties (i), (iii) and (iv) of Theorem \ref{Cap Thm (p,q) global capacity} whenever $1 \le n<p<\infty$ or $1<n=p<\infty.$ Thus, ${\mathrm{Cap}}_{(p,1)}(\cdot)$ is a Choquet capacity when $1 \le n<p<\infty$ or when $1<n=p<\infty.$
Like in Theorem \ref{Cap Thm (p,q) global capacity}, $n \ge 1$ is an integer. We may invoke an important capacitability theorem of Choquet and state the following result. See Doob \cite[Appendix II]{Doo}.

\begin{Theorem}\label{Choquet (p,1) global capacity Thm}
Let $n \ge 1$ be an integer. Suppose that $1 \le n<p<\infty$ or $1<n=p<\infty.$
The set function $E \mapsto {\mathrm{cap}}_{(p,1)}(E),$
$E \subset {\mathbf{R}}^n,$ is a Choquet capacity. In particular, all
Borel subsets (in fact, all analytic) subsets $E$ of
${\mathbf{R}}^n$ are capacitable, i.e.
$${\mathrm{Cap}}_{(p,1)}(E)=\sup \, \{ {\mathrm{Cap}}_{(p,1)}(K): K \subset E \mbox{ compact} \}.$$
\end{Theorem}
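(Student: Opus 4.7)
The plan is to verify the three axioms of a Choquet capacity for the set function $\mathrm{Cap}_{(p,1)}(\cdot)$ on ${\mathbf{R}}^n$ and then to invoke Choquet's capacitability theorem as recorded in Doob \cite[Appendix II]{Doo}. The pattern is identical to the one followed for Theorems \ref{Choquet Global (p,q) Cap Thm 1<q<infty}, \ref{Choquet (p,1) relative capacity Thm} and \ref{Choquet p,1 relative capacity Thm}; the present result is essentially a corollary of the Monotone Convergence Theorem \ref{MCT for the (p,1) and p,1 global capacities} (i).

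First, I would record that properties (i) and (iii) of Theorem \ref{Cap Thm (p,q) global capacity} already supply, for every $1<p<\infty$ and every integer $n \ge 1$, the monotonicity of $\mathrm{Cap}_{(p,1)}(\cdot)$ and its continuity from above along decreasing sequences of compact subsets of ${\mathbf{R}}^n$. These two axioms are inherited from the general theory developed in Section \ref{section Sobolev-Lorentz capacity} and do not require any relation between $n$ and $p$.

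Second, the remaining Choquet axiom---continuity from below along arbitrary increasing sequences of subsets of ${\mathbf{R}}^n$---is exactly the content of Theorem \ref{MCT for the (p,1) and p,1 global capacities} (i), which is available precisely in the range $1 \le n<p<\infty$ or $1<n=p<\infty$. This is where the hypothesis on $n$ and $p$ enters: the MCT argument relies on the continuous embedding $H_0^{1,(p,1)}(\Omega) \hookrightarrow C(\overline{\Omega}) \cap L^\infty(\Omega)$ (Theorem \ref{continuous embedding of H01n1 into C cap Linfty} for $n=p$ and \cite[Theorems 5.5, 5.6]{Cos4} for $n<p$) as well as on the weak convergence theorem \ref{Bdd in H01p1 weak limit in H01p1} (ii); both tools are available exactly in the stated range of $n$ and $p$. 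Thus no additional work on weak convergence or on the non-reflexive Lorentz setting is needed at this stage---the hard analytic obstacles have already been overcome.

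With monotonicity, continuity from above on compacts, and continuity from below on increasing sequences all verified, $\mathrm{Cap}_{(p,1)}(\cdot)$ is by definition a Choquet capacity on ${\mathbf{R}}^n$. Invoking Choquet's capacitability theorem (Doob \cite[Appendix II]{Doo}) I would then conclude that every analytic---in particular, every Borel---subset $E$ of ${\mathbf{R}}^n$ is capacitable, so that
$$\mathrm{Cap}_{(p,1)}(E)=\sup\{\mathrm{Cap}_{(p,1)}(K): K \subset E \text{ compact}\}.$$
The main obstacle was dealt with already in Theorem \ref{MCT for the (p,1) and p,1 global capacities}; here the role of the proof is merely to record the structural consequence.
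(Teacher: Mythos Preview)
Your proposal is correct and follows essentially the same approach as the paper: the paper states in the paragraph immediately preceding the theorem that the set function satisfies properties (i), (iii) and (iv) of Theorem \ref{Cap Thm (p,q) global capacity} via Theorem \ref{MCT for the (p,1) and p,1 global capacities} (i), and then invokes Choquet's capacitability theorem from Doob \cite[Appendix II]{Doo}. Your write-up adds some helpful context about where the hypothesis on $n$ and $p$ enters, but the argument itself is identical.
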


Similarly, from Theorem \ref{MCT for the (p,1) and p,1 global capacities} (ii) and the discussion before Question \ref{Question Choquet p,q global capacity} it follows that the set function ${\mathrm{Cap}}_{p,1}(\cdot)$ satisfies properties (i), (iii) and (iv) of Theorem \ref{Cap Thm p,q global capacity} whenever $1 \le n<p<\infty$ or $1<n=p<\infty.$ Thus, ${\mathrm{Cap}}_{p,1}(\cdot)$ is a Choquet capacity whenever $1 \le n<p<\infty$ or $1<n=p<\infty.$ Like in Theorem \ref{Cap Thm p,q global capacity}, $n \ge 1$ is an integer. We may invoke an important capacitability theorem of Choquet and state the following result. See Doob \cite[Appendix II]{Doo}.

\begin{Theorem}\label{Choquet p,1 global capacity Thm}
Let $n \ge 1$ be an integer. Suppose that $1 \le n<p<\infty$ or $1<n=p<\infty.$
The set function $E \mapsto {\mathrm{Cap}}_{p,1}(E),$
$E \subset {\mathbf{R}}^n,$ is a Choquet capacity. In particular, all
Borel subsets (in fact, all analytic) subsets $E$ of
${\mathbf{R}}^n$ are capacitable, i.e.
$${\mathrm{Cap}}_{p,1}(E)=\sup \, \{ {\mathrm{cap}}_{p,1}(K): K \subset E \mbox{ compact} \}.$$
\end{Theorem}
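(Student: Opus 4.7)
The plan is to verify the three defining axioms of a Choquet capacity for the set function $\mathrm{Cap}_{p,1}(\cdot)$ on ${\mathbf{R}}^n$ under the standing hypotheses $1 \le n < p < \infty$ or $1 < n = p < \infty$, and then invoke the classical capacitability theorem of Choquet (as recorded in \cite[Appendix II]{Doo}) to deduce that every analytic subset of ${\mathbf{R}}^n$, and in particular every Borel subset, is capacitable.

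First I would observe that the monotonicity axiom and the continuity from above along decreasing sequences of compact sets are available with no restriction on $n$ and $p$: these are properties (i) and (iii) of Theorem \ref{Cap Thm p,q global capacity}, valid for all $1 < p < \infty$ and all $1 \le q \le \infty$, hence in particular for $q = 1$ on any ${\mathbf{R}}^n$. No additional argument is needed for these two axioms.

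The third axiom, continuity from below along arbitrary increasing sequences of subsets of ${\mathbf{R}}^n$, is exactly the content of Theorem \ref{MCT for the (p,1) and p,1 global capacities} (ii), which has just been established for precisely the ranges $1 \le n < p < \infty$ and $1 < n = p < \infty$. This is the nontrivial new input. With these three axioms verified, the set function $E \mapsto \mathrm{Cap}_{p,1}(E)$ satisfies the definition of a Choquet capacity, and one concludes by appealing to \cite[Appendix II]{Doo} that
$$\mathrm{Cap}_{p,1}(E) = \sup \{\, \mathrm{Cap}_{p,1}(K) : K \subset E \text{ compact}\,\}$$
for every analytic, and in particular every Borel, subset $E$ of ${\mathbf{R}}^n$.

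There is essentially no obstacle in this proof itself, since it reduces to a bookkeeping check of axioms followed by a citation of Choquet's theorem. The genuine difficulty has already been absorbed into Theorem \ref{MCT for the (p,1) and p,1 global capacities} (ii), whose proof in turn relies on the weak convergence machinery of Theorem \ref{Bdd in H01p1 weak limit in H01p1} in the non-reflexive space $H_0^{1,(p,1)}({\mathbf{R}}^n)$, on the continuous/bounded representatives coming from the embeddings in \cite[Theorems 5.5 and 5.6]{Cos4} and from Theorem \ref{continuous embedding of H01n1 into C cap Linfty}, and on Proposition \ref{size of the set of pointwise convergence when p=n} in the borderline case $n = p$. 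Thus the proof of Theorem \ref{Choquet p,1 global capacity Thm} should be short and parallel to the proof of Theorem \ref{Choquet (p,1) global capacity Thm}, with the only substitution being the use of part (ii), instead of part (i), of Theorem \ref{MCT for the (p,1) and p,1 global capacities}.
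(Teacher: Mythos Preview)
Your proposal is correct and follows essentially the same approach as the paper: verify monotonicity and continuity from above via Theorem \ref{Cap Thm p,q global capacity} (i) and (iii), supply continuity from below via Theorem \ref{MCT for the (p,1) and p,1 global capacities} (ii), and then invoke Choquet's capacitability theorem from \cite[Appendix II]{Doo}. The paper's argument is precisely this short bookkeeping, stated in the paragraph preceding the theorem, and your remark that it parallels Theorem \ref{Choquet (p,1) global capacity Thm} with part (ii) replacing part (i) of the Monotone Convergence Theorem is exactly right.
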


\vspace{2mm}

\noindent {\bf{Acknowledgements.}} I started writing this article towards the end of my stay
at the University of Pisa and I finished it after my return to the University of Pite\c sti.

%\bibliography{Bibliopapers}

\begin{thebibliography}{99999}
\smallskip
\bibitem{BS} C. Bennett and R. Sharpley. {\em Interpolation of Operators}.
Academic Press, 1988.

\smallskip
\bibitem{CiPi1} A. Cianchi, L. Pick. Sobolev embeddings into BMO, VMO, and ${L}_{\infty}$.
{\em Ark. Mat.}, {\bf 36} (1998), 317--340.

\smallskip
\bibitem{CiPi2} A. Cianchi, L. Pick. Sobolev embeddings into spaces of Campanato, Morrey, and H\"{o}lder type.
{\em J. Math. Anal. Appl.}, {\bf 282} (2003), 128--150.

\smallskip
\bibitem{Cos0} \c S. Costea. {\em Strong $A_{\infty}$-weights and scaling
invariant  Besov and Sobolev-Lorentz capacities}, Ph.D thesis, University of
Michigan, 2006.

\smallskip
\bibitem{Cos1} {\c S.} Costea. Scaling invariant Sobolev-Lorentz capacity on ${\mathbb{R}}^n$.
{\em Indiana Univ. Math. J.}, {\bf 56} (2007), no. 6, 2641--2669.

\smallskip
\bibitem{Cos3} \c S. Costea {\em Scaling Besov and Sobolev-Lorentz capacities in the Euclidean setting}.
Publishing House of the Romanian Academy, Bucharest, 2012, xii+114 pages.

\bibitem{Cos4} \c S. Costea. Sobolev-Lorentz spaces in the Euclidean setting and counterexamples.
{\em Nonlinear Analysis: Theory, Methods and Applications}, {\bf152} (2017), 149--182.

\smallskip
\bibitem{CosMaz} {\c S.} Costea and V. Maz'ya. Conductor inequalities and criteria for Sobolev-Lorentz
two-weight inequalities, {\em Sobolev Spaces in Mathematics II.
Applications in Analysis and Partial Differential Equations}, p. 103--121,
Springer, (2009).

\smallskip
\bibitem{Doo} J.L. Doob. {\em Classical Potential Theory and its Probabilistic Counterpart}.
Springer-Verlag, 1984.

\smallskip
\bibitem{Eva} L.C. Evans. {\em Partial Differential Equations}. American Mathematical Society, 1998.

\smallskip
\bibitem{GT} D. Gilbarg and N.S. Trudinger. {\em Elliptic Partial Differential Equations of Second Order}. Springer-Verlag, 2nd edition, 1983.

\smallskip
\bibitem{HKM} J. Heinonen, T. Kilpel{\"{a}}inen and O. Martio. {\em Nonlinear Potential Theory of Degenerate Elliptic Equations}.
Oxford University Press, 1993.

\smallskip
\bibitem{Hun} R. Hunt. On $L(p,q)$ measure spaces. {\em Enseignement Math.}, {\bf12} (1966), no. 2, 249--276

\smallskip
\bibitem{KKM} J. Kauhanen, P. Koskela and J. Mal{\'{y}}. On functions with derivatives in a Lorentz space.
{\em Manuscripta Math.}, {\bf100} (1999), no. 1, 87--101.

\smallskip
\bibitem{MSZ} J. Mal\'{y}, D. Swanson, and W. P. Ziemer. Fine behavior of functions whose gradients are in an Orlicz space. {\em Studia Math.} {\bf190} (2009), no. 1, 33--71.

\smallskip
\bibitem{Maz} V. Maz'ya. {\em Sobolev Spaces}. Springer-Verlag, 1985.

\smallskip
\bibitem{Ste} E. Stein. Editor's note: The differentiability of functions in ${\mathbf{R}}^n.$ {\em Annals of Mathematics},
{\bf113} (1981), 383-–385.

\smallskip
\bibitem{SW} E. Stein, G. Weiss. {\em Introduction to Fourier Analysis on Euclidean Spaces}. Princeton University Press, 1975.

\smallskip
\bibitem{Zie} W.P. Ziemer. {\em Weakly Differentiable Functions}. Springer-Verlag, 1989.
\end{thebibliography}
\end{document}